\def\l@subsection{\@tocline{2}{0pt}{2.5pc}{5pc}{}}
\DeclareMathOperator{\su}{supp}
\DeclareSymbolFont{largesymbol}{OMX}{yhex}{m}{n}
\DeclareMathAccent{\Widehat}{\mathord}{largesymbol}{"62}
\newcommand*\di{\mathop{}\!\mathrm{d}}
\def\e{\epsilon}
\numberwithin{equation}{section}              % numerazionedelleequazioni
\newtheorem{theorem}{Theorem}[section]
\newtheorem{lemma}{Lemma}[section]
\newtheorem{proposition}{Proposition}[section]
\newtheorem*{proposition*}{Proposition}
\newtheorem*{corollary*}{Corollary}
\newtheorem{definition}{Definition}[section]
\newtheorem*{definitions*}{Definitions}
\newtheorem*{acknowledgements*}{Acknowledgements}
\newtheorem*{conjecture*}{\bf Conjecture}
\newtheorem{example}{\bf Example}[section]
\newtheorem*{example*}{\bf Example}
\theoremstyle{remark}
\newtheorem{remark}{\bf Remark}[section]
\begin{document}
\date{}                                     %TITOLO\date\today

\author{Yu Gao}
\address[Y. Gao]{Department of Applied Mathematics, The Hong Kong Polytechnic University, Hung Hom, Kowloon, Hong Kong}
\email{mathyu.gao@polyu.edu.hk}

\author{Hao Liu}
\address[H. Liu]{Department of Mathematics, The University of Hong Kong, Pokfulam, Hong Kong}
\email{u3005509@connect.hku.hk}

\author{Tak Kwong Wong}
\address[T. K. Wong]{Department of Mathematics, The University of Hong Kong, Pokfulam, Hong Kong.}
\email{takkwong@maths.hku.hk}

\title[Conservative solns to Hunter-Saxton eqt]{Regularity Structure  of conservative solutions to the Hunter-Saxton equation}

\maketitle

\begin{abstract}
In this paper we characterize the regularity structure, as well as show the global-in-time existence and uniqueness, of (energy) conservative solutions to the Hunter-Saxton equation by using the method of characteristics. The major difference between the current work and previous results is that we are able to characterize the singularities of energy measure and their nature in a very precise manner. In particular, we show that singularities, whose temporal and spatial locations are also explicitly given in this work, may only appear at at most countably many times, and are completely determined by the absolutely continuous part of initial energy measure. Our mathematical analysis is based on using the method of characteristics in a generalized framework that consists of the evolutions of solution to the Hunter-Saxton equation and the energy measure. This method also provides a clear description of the semi-group property for the solution and energy measure for all times.

\end{abstract}
{\small
\keywords{\textbf{\textit{Keywrods}:} %Hunter-Saxton equation, conservative solutions,  regularity structure, 
formulation of singularity,   well-posedness, integrable system, decomposition of energy measure, semi-group property}
}
{
\hypersetup{linkcolor=blue}
\tableofcontents
}

\section{Introduction}
In this paper, we study the regularity structure of (energy) conservative solutions to the Hunter-Saxton equation on the whole real line $\mathbb{R}$:  for $x$, $t\in\mathbb{R}$,
\begin{align}\label{eq:HS}
u_t+uu_x=\frac{1}{2} \int_{-\infty}^x u_y^2 (y, t)\di y.
\end{align}
Here, $u_t$ and $u_x$ represent the time and spatial derivative of $u$, respectively. The equation~\eqref{eq:HS} is an integrable equation that was first proposed by Hunter and Saxton \cite{hunter1991dynamics} to study a nonlinear instability in the director field of a nematic liquid crystal.
When $u$ is a classical solution to \eqref{eq:HS}, differentiating \eqref{eq:HS} with respect to the spatial variable $x$ yields
\begin{align}\label{eq:HSD}
(u_{t}+uu_{x})_x=\frac{1}{2}u_x^2,
\end{align}
and hence, the following energy conservation law holds:
\begin{align}\label{eq:conservationlaw}
(u_x^2)_t+(uu_x^2)_x=0.
\end{align}
According to equation \eqref{eq:conservationlaw}, it is nature to find the global-in-time conservative solution $u$ that satisfies 
$\|u_x(\cdot,t)\|_{L^2}=\|\bar{u}_x\|_{L^2}$ for any $t\in\mathbb{R}$, provided that the initial datum $u|_{t=0}=\bar{u}$ satisfies $\bar{u}_x\in L^2(\mathbb{R})$. However, similar to the inviscid Burgers equation, using the characteristics method, one can verify the following fact: if the initial datum {$\bar{u}$ satisfies $\inf_{x\in\mathbb{R}}\bar{u}_x(x)<0$,} then we have $u_x(\cdot,t)\to-\infty$ as $t\to T_0:=-2/\inf_{x\in\mathbb{R}}{\bar{u}_x(x)}.$ At the blow-up time, the function $u$ will lose the information of energy conservation temporarily. This can be seen from the following simple example.
Consider the solution with initial datum (see also in \cite{hunter1995nonlinear,bressan2007asymptotic})
\begin{equation*}
\bar{u}(x)=\left\{
\begin{aligned}
&0,\quad x\leq0,\\
&-x,\quad 0<x< 1,\\
&-1,\quad x\geq 1.
\end{aligned}
\right.
\end{equation*}
The conservative solution $u$ is explicitly given by
\begin{equation*}
u(x,t)=\left\{
\begin{aligned}
&0,\quad x\leq0,\\
&-\frac{2x}{2-t},\quad 0<x\leq \left(1-\frac{t}{2}\right)^2,\\
&-1+\frac{t}{2},\quad x\geq  \left(1-\frac{t}{2}\right)^2,
\end{aligned}
\right.\quad
u_{x}(x,t)=\left\{
\begin{aligned}
&-\frac{2}{2-t},\quad 0<x\leq \left(1-\frac{t}{2}\right)^2,\\
&0,\quad ~\textrm{otherwise.}
\end{aligned}
\right.
\end{equation*}
We have $u(x,2)\equiv 0$ and $u_x^2(\cdot,t)\to \delta$ in the sense of distributions as $t\to 2$, where $\delta$ is the Dirac delta mass at the origin $x=0$. Notice that $\|u_x(\cdot,t)\|_{L^2}= 1$ for any $t\neq 2$ and $\|u_x(\cdot,2)\|_{L^2}=0$. Therefore, if we only study the solution $u$, the energy $\|u_x(\cdot,t)\|_{L^2}$ is not conserved {only} at $t=2$; this viewpoint, which causes a temporary/momentary disappearance of energy, is very non-physical. {It is worth noting that at the blow-up time $t=2$, the energy density is converted into the singular measure $\delta$, and $u_x^2(\cdot,2)\equiv 0$ corresponds to the absolutely continuous part of the measure $\delta$ with respect to the Lebesgue measure $\mathcal{L}$.} 
To describe the energy conservation of weak solutions, we use a nonnegative Radon measure $\mu(t)$, which can be seen as the energy measure, to replace $u_x^2(\cdot,t)$ in equations \eqref{eq:HS} and \eqref{eq:conservationlaw}, and study the Hunter-Saxton equation in the following  generalized framework:

\

\noindent\underline{\textbf{Generalized Framework}}:
\begin{align}
&u_t+uu_x=\frac{1}{2}\int_{-\infty}^x\di \mu(t),\label{eq:gHS1}\\
&\mu_t+(u\mu)_x=0,\label{eq:gHS2}\\
&\di \mu_{ac}(t)=u_x^2(\cdot,t)\di x.\label{eq:gHS3}
\end{align}

\

\noindent Let us comment on the notations in \eqref{eq:gHS1}-\eqref{eq:gHS3} as follows. In \eqref{eq:gHS3}, the measure $\mu_{ac}(t)$ stands for the absolutely continuous part of  the energy measure $\mu(t)$  with respect to the Lebesgue measure. 
{
	In \eqref{eq:gHS1}, the integral $\int_{-\infty}^x\di \mu(t)$ is not well defined when $\mu(t)$ contains pure point measures. However, we are going to show that there are only at most countably many time $t$ such that  the energy measure $\mu(t)$ has pure point measures (see Theorem \ref{thm:measure} (iii) for details), and hence, the ambiguity of this integral will not affect the definition of weak (energy conservative) solutions below. Therefore, we will keep this integral notation in \eqref{eq:gHS1}.
Equation \eqref{eq:gHS2} is viewed in the sense of distributions.}
We then study the  conservative solutions to the generalized framework \eqref{eq:gHS1}-\eqref{eq:gHS3}.
First of all, let us define the following space for conservative solutions:
\begin{definition}
Let $\mathcal{D}$ be the set of pairs $(u,\mu)$ satisfying
\begin{enumerate}
\item[(i)] $u\in C_b(\mathbb{R})$, $u_x\in L^2(\mathbb{R})$;
\item[(ii)] $\mu\in \mathcal{M}_+(\mathbb{R})$;
\item[(iii)] $\di \mu_{ac}=u_x^2\di x$, where  $\mu_{ac}$ is the absolutely continuous part of measure $\mu$ with respect to the Lebesgue measure $\mathcal{L} $.
\end{enumerate}
Here, $\mathcal{M}_+(\mathbb{R})$ stands for the set of finite positive Radon measure endowed with the weak topology. 
\end{definition}
Following the physical meaning of the energy measure $\mu(t)$, we require  $\mu(t) \in \mathcal{M}_+(\mathbb{R})$, so that the total energy is finite; however, technically speaking, our mathematical analysis in this paper only requires that the positive measure $\mu(t)((-\infty, x]) < \infty$ for all $x \in \mathbb{R}$.
There are several different definitions for conservative solutions to the Hunter-Saxton  equation; see \cite[Definition 4.4]{Bressan2010} for instance.
In this paper, we will use the following definition of conservative solutions  to the Hunter-Saxton  equation:
\begin{definition}[Conservative solutions]\label{def:weak}
Let $(\bar{u},\bar{\mu})\in \mathcal{D}$ be a given initial datum. The pair $(u(t), \mu(t))$ is said to be a global-in-time conservative solution to the generalized framework \eqref{eq:gHS1}-\eqref{eq:gHS3} with the initial datum $(\bar{u},\bar{\mu})$, if the pair $(u(t),\mu(t))$ satisfies the following:
\begin{enumerate}
\item[(i)] $u\in C(\mathbb{R};C_b(\mathbb{R}))\cap C^{1/2}_{loc}(\mathbb{R}\times\mathbb{R})$, $u_t\in L_{loc}^2(\mathbb{R}\times\mathbb{R})$, $u_x(\cdot, t) \in L^2(\mathbb{R})$ for all $t \in \mathbb{R}$,  and $\mu\in C(\mathbb{R};\mathcal{M}_+(\mathbb{R}))$;
\item[(ii)] $(u(\cdot,0),\mu(0))=(\bar{u},\bar{\mu})$, and $\di\mu(t)={u}_x^2(x,t)\di x$ for a.e. $t\in\mathbb{R}$;
\item[(iii)] the equation
\begin{align}\label{eq:weakformula}
\int_{\mathbb{R}}\int_{\mathbb{R}}u\phi_t-\phi\left(uu_x-\frac{1}{2}F\right)\di x\di t=0
\end{align}
holds for all $\phi\in C_c^\infty(\mathbb{R}\times \mathbb{R})$; the function $F(x,t)$ is defined by  $F(x,t) :=\int_{-\infty}^x\di \mu(t)$; 
\item[(iv)] conservation of energy:
\begin{equation}\label{eq:fourth}
\int_{\mathbb{R}}\int_{\mathbb{R}} \left( \phi_{t} + u \phi_{x}\right) \di \mu(t) \di t =0
\end{equation}
for every $\phi\in C^{\infty}_c(\mathbb{R}\times \mathbb{R})$;
\item[(v)] equation \eqref{eq:gHS3} holds for all $t \in \mathbb{R}$. 
\end{enumerate}
The last condition is equivalent to say that $(u(t), \mu(t)) \in \mathcal{D}$ for all $t \in \mathbb{R}$.   
\end{definition}
It follows from the regularity requirement (i) in Definition \ref{def:weak} that the condition (iii) in Definition \ref{def:weak} (namely Identity \eqref{eq:weakformula} holds for all test functions) is equivalent to the following condition:
\begin{enumerate}
\item [(iii)'] Equation \eqref{eq:gHS1} holds in the $L^2_{loc}(\mathbb{R}\times\mathbb{R})$ sense.
\end{enumerate}
The condition (iii)' is equivalent to \cite[Definition 1.1(ii)]{bressan2007asymptotic}, when the flux is given by $u^2/2$.

\begin{remark}[Energy conservation]\label{rem: mu(t)(mathbb R) is conserved}
	Indeed, any weak solution $(u, \mu)$ to the Hunter-Saxton equation under the generalized framework \eqref{eq:gHS1}-\eqref{eq:gHS3} in the sense of Definition \ref{def:weak} satisfies
	the energy conservation $\mu(t)(\mathbb{R}) = \bar{\mu}(\mathbb{R})$ for $t\in \mathbb{R}$. This is why we call it a conservative solution. To verify this energy conservation, we now consider any arbitrary time $t>0$ (the case for $t<0$ will be similar).
	We choose non-negative smooth  functions $\chi_\e(s)$ and $g_{R}(x)$ for $\e>0$ and $R>0$, where 	$\chi_\e(s)=0$ for $s \le -\e$ and $s \ge t+\e$. 
	$\chi_\e(s)=1$ for $s\in (0, t)$,  $\chi_\e'(s) \ge 0$ for $s \in (-\e,0)$ and $\chi_\e'(s) \le 0$ for $s \in (t, t+\e)$.  The function $g_R$ satisfies $g_{R}(x) = 1$ for $|x| \le R$, $g_{R}(x) = 0$ for $|x| \ge 2R$ and $\left|g'_{R}(x)\right| \le \frac{2}{R}$. Finally, let  $\phi(x,t) = \chi_\e(t) g_{R}(x)$, and substituting this $\phi$ into equation \eqref{eq:fourth}, we obtain  
	\begin{equation*}
	\int_{\mathbb{R}} \int_{\mathbb{R}} \chi'_\e (s) g_{R}(x) \di \mu(s)\di s + \int_{\mathbb{R}} \int_{\mathbb{R}} u(x, s) \chi_\e (s) g'_{R}(x) \di \mu(s)  \di s =0.
	\end{equation*}
	Passing to the limit as $R \to \infty$, and using $u$ is bounded, we have
	\begin{equation*}
	\int_{-\e}^{0}\chi'_\e (s) \mu(s)(\mathbb{R})  \di s + \int_{t}^{t+\e}\chi'_\e (s) \mu(s)(\mathbb{R})  \di s =0.
	\end{equation*}
	Since $\mu \in C(\mathbb{R};\mathcal{M}_+(\mathbb{R}))$, 
	\begin{equation*}
	\begin{aligned}
	\left|\int_{-\e}^{0}\chi'_\e (s) \mu(s)(\mathbb{R})  \di s -  \bar{\mu}(\mathbb{R})\right|
	=&\left|\int_{-\e}^{0}\chi'_\e (s) \mu(s)(\mathbb{R})  \di s - \int_{-\e}^{0}\chi'_\e (s) \bar{\mu}(\mathbb{R})  \di s \right|\\
	\le& \int_{-\e}^{0}\chi'_\e (s) \left|\mu(s)(\mathbb{R}) - \bar{\mu}(\mathbb{R})    \right| \di s \to 0 ~\textrm{as} ~\e \to 0^+.
	\end{aligned}
	\end{equation*}
	This shows that $\int_{-\e}^{0}\chi'_\e (s) \mu(s)(\mathbb{R}) \di s \to  \bar{\mu}(\mathbb{R}) $ as $\e \to 0^+$.
	On the other hand, one can also verify that $\int_{t}^{t+\e}\chi'_\e (s) \mu(s)(\mathbb{R})  \di s $ $\to $ $- \mu(t)(\mathbb{R})$ as $\e \to 0^+$ in a similar manner, and hence, we finally obtain $\mu(t)(\mathbb{R}) = \bar{\mu}(\mathbb{R})$. 
	
\end{remark}

\begin{remark}\label{eq: u_x in Linfity L2}
	It follows from Remark \ref{rem: mu(t)(mathbb R) is conserved} and Equation \eqref{eq:gHS3} that we also have $u_x \in L^{\infty}(\mathbb{R}; L^2(\mathbb{R}))$.
\end{remark}

\begin{remark}
	It follows $\mu\in C(\mathbb{R};\mathcal{M}_+(\mathbb{R}))$ and (ii) in the Definition \ref{def:weak} that $F(\cdot ,t)$ is well defined for a.e. $t \in \mathbb{R}$, and $F \in L^{\infty}_{loc}(\mathbb{R}\times \mathbb{R})$. Moreover, according to Remark \ref{eq: u_x in Linfity L2}, we also know   $F \in L^\infty( \mathbb{R}\times\mathbb{R})$.
\end{remark}
Under the above definition, we have the following results:
\begin{theorem}\label{thm:introduce}
Let $(\bar{u},\bar{\mu})\in\mathcal{D}$ be given. Then there exists a unique global-in-time  conservative solution (in the sense of Definition \ref{def:weak}) $(u(t),\mu(t))$ to the generalized framework \eqref{eq:gHS1}-\eqref{eq:gHS3} with  $(\bar{u},\bar{\mu})$ as its initial datum.  Let $\mu(t)=\mu_{ac}(t)+\mu_{pp}(t)+\mu_{sc}(t)$, where $\mu_{ac}(t)$, $\mu_{pp}(t)$, and $\mu_{sc}(t)$ are the absolutely continuous part, pure point part and the singular continuous part of $\mu$ respectively.
 The following statements also hold:
\begin{enumerate}
\item [(i)] (Energy conservation) we have $\mu \in C(\mathbb{R}; \mathcal{M}_+(\mathbb{R}))$ and
\begin{align}\label{eq:conserE}
\mu(t)(\mathbb{R})= \bar{\mu}(\mathbb{R}),\quad \mbox{for all }~t\in\mathbb{R}.
\end{align}
\item [(ii)] (Formation of singularity, and its temporal and spatial locations) For any $t\neq0$, $\mu_{pp}(t)$ and $\mu_{sc}(t)$ are determined by the absolutely continuous part $\bar{\mu}_{ac}$, namely determined by $\bar{u}_x$. More precisely, for any $t\neq 0$, we define
\begin{align}\label{eq:singulartE}
A_t^E =\left\{x:~~\bar{u}_x(x)=-\frac{2}{t}\right\}.
\end{align}
If $\mathcal{L}(A_t^E)\neq 0$, then $\mu_{pp}(t)+\mu_{sc}(t)\neq 0$  (i.e., $\mu_{pp}(t)+\mu_{sc}(t)$ is not a zero measure) and
\begin{align}\label{eq:support}
\mathrm{supp}\Big(\mu_{pp}(t)+\mu_{sc}(t)\Big)\subset \left\{x+\bar{u}(x)t+\frac{t^2}{4}\bar{\mu}((-\infty,x)): ~ x\in A_t^E\right\}.
\end{align}
All the intervals with positive length  in $A_t^E$ will generate the pure point part $\mu_{pp}(t)$, and the rest of $A_t^E$ will generate the singular continuous part $\mu_{sc}(t)$.

\item [(iii)] (Countably many singular times) There are at most countably many times $t\in \mathbb{R}$, such that either the pure point part or the singular continuous part of $\mu(t)$ is not identically equal to zero, namely both the sets
\begin{align}\label{eq:singulartimeE}
T_p:=\{t:~\mu_{pp}(t)\neq 0\}\quad\mbox{and}\quad	T_s:=\{t:~\mu_{sc}(t)\neq 0\}
\end{align}
are countable. 
\item [(iv)] (Regularity) For all time $t\in\mathbb{R}$, the function $u(x,t)$ is globally absolutely continuous with respect to the spatial variable $x$ and  
\begin{align}\label{eq:acpartE}
\di\mu_{ac}(t)=u_x^2(x,t)\di x.
\end{align}
Furthermore,
\begin{align}\label{eq:propertiesuE}
u\in C(\mathbb{R};C_b(\mathbb{R}))\cap C^{1/2}_{loc}(\mathbb{R}\times\mathbb{R}),~~ u_x\in L^\infty(\mathbb{R};L^2(\mathbb{R})),~~\mbox{and}~~ u_t\in L_{loc}^2(\mathbb{R}\times \mathbb{R}).
\end{align}
\item [(v)] (Asymptotic behaviour) 
If $\bar{u}(-\infty):=\lim_{x\to-\infty}\bar{u}(x)$ exists, then we  have
\begin{align}\label{eq:leftfarE}
\lim_{x\to-\infty}u(x,t)= \bar{u}(-\infty).
\end{align}
On the other hand, if $\bar{u}(+\infty):=\lim_{x\to +\infty}\bar{u}(x)$ exists, then we also have
\begin{align}\label{eq:rightfarE}
\lim_{x\to+\infty}u(x,t)= \bar{u}(+\infty)+\frac{1}{2}\bar{\mu}(\mathbb{R})t.
\end{align}

\end{enumerate}
\end{theorem}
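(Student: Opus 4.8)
The plan is to solve the generalized framework \eqref{eq:gHS1}--\eqref{eq:gHS3} by the method of characteristics, reducing everything to two explicit Lagrangian formulas. Write $\bar F(\xi):=\bar\mu((-\infty,\xi))$ and let $y(t,\xi)$ be the characteristic from $\xi$, i.e. $\partial_t y=u(y,t)$ with $y(0,\xi)=\xi$. The transport equation \eqref{eq:gHS2} forces the cumulative energy $\int_{-\infty}^{y(t,\xi)}\di\mu(t)$ to stay constant in $t$, hence equal to $\bar F(\xi)$, and then \eqref{eq:gHS1} gives $\tfrac{\di}{\di t}u(y(t,\xi),t)=\tfrac12\bar F(\xi)$, which integrates to
\[
U(t,\xi):=u(y(t,\xi),t)=\bar u(\xi)+\tfrac{t}{2}\bar F(\xi),\qquad y(t,\xi)=\xi+\bar u(\xi)\,t+\tfrac{t^2}{4}\bar F(\xi).
\]
Differentiating \eqref{eq:HSD} along the flow shows $w:=u_x\circ y$ solves the Riccati equation $\dot w=-\tfrac12 w^2$, so $u_x\circ y=\bar u_x/(1+\tfrac{t}{2}\bar u_x)$; and wherever $\partial_\xi\bar F=\bar u_x^2$ (the absolutely continuous part) one obtains the decisive identity $\partial_\xi y=(1+\tfrac{t}{2}\bar u_x)^2\ge0$. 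Thus $y(t,\cdot)$ is monotone and its Jacobian vanishes exactly on the set $A_t^E$ of \eqref{eq:singulartE}.

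For existence, I would define $\mu(t)$ as the push-forward of $\bar\mu$ under $y(t,\cdot)$ and define $u(\cdot,t)$ by $u(y(t,\xi),t)=U(t,\xi)$. The first thing to check is that the latter is single-valued: a flat interval of $y(t,\cdot)$ can occur only on $A_t^E$, where $\partial_\xi U=\bar u_x(1+\tfrac{t}{2}\bar u_x)=0$, so $U$ is constant there too; across a jump of $y(t,\cdot)$ produced by an atom of $\bar\mu$ one fills the gap by affine interpolation. Items (i)--(v) of Definition \ref{def:weak} then follow from the push-forward structure: on the complement of $A_t^E$ the change of variables $x=y(t,\xi)$ gives $u_x^2=\bar u_x^2/(1+\tfrac t2\bar u_x)^2$, matching the density of the absolutely continuous part of the push-forward and yielding \eqref{eq:gHS3}, while \eqref{eq:weakformula} and \eqref{eq:fourth} follow by substituting $x=y(t,\xi)$ into the integrals and invoking the two ODEs.

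The structural statements then fall out of these formulas. Energy conservation (i) is immediate because a push-forward preserves total mass (this is Remark \ref{rem: mu(t)(mathbb R) is conserved}). For (ii), the singular part of $\mu(t)$ is the image under $y(t,\cdot)$ of $\bar\mu_{ac}$ restricted to $A_t^E$: positive-length intervals of $A_t^E$ collapse to points and create the atoms $\mu_{pp}(t)$, the atomless remainder of positive measure creates $\mu_{sc}(t)$, and \eqref{eq:support} is simply the image of $A_t^E$ under the explicit map; an atom of $\bar\mu$ does not contribute for $t\ne0$ because it forces a jump of $y(t,\cdot)$ and so spreads its mass into the absolutely continuous density $u_x=2/t$ across the gap. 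For (iii) I would note that both $\mu_{pp}(t)\ne0$ and $\mu_{sc}(t)\ne0$ require $\mathcal L(A_t^E)>0$; since the level sets $\{\bar u_x=c\}$ are disjoint and $\sum_{c\ne0}c^2\,\mathcal L(\{\bar u_x=c\})\le\|\bar u_x\|_{L^2}^2<\infty$, only countably many $c\ne0$ have positive measure, and via $c=-2/t$ this makes both sets in \eqref{eq:singulartimeE} countable. Finally (iv)--(v) are read off directly: the regularity bounds use $u_x\in L^\infty(\mathbb R;L^2)$ (Remark \ref{eq: u_x in Linfity L2}) together with the form of $U$ and $y$, and the asymptotics \eqref{eq:leftfarE}--\eqref{eq:rightfarE} come from letting $\xi\to\pm\infty$ in $U(t,\xi)$, using $\bar F(-\infty)=0$ and $\bar F(+\infty)=\bar\mu(\mathbb R)$.

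The hard part will be uniqueness, and giving rigorous meaning to the characteristics when $\bar\mu$ carries a singular part. The push-forward construction is unambiguous, but to recover the above ODEs from a merely weak solution one must show that characteristics exist, are absolutely continuous in $t$, and that along them $u$ and the cumulative energy really satisfy $\dot U=\tfrac12\bar F$ and $\tfrac{\di}{\di t}\int_{-\infty}^{y}\di\mu=0$ despite the possible discontinuity of $F(\cdot,t)$ at atoms; this is exactly where the weak identities \eqref{eq:weakformula}--\eqref{eq:fourth} and the continuity $\mu\in C(\mathbb R;\mathcal M_+(\mathbb R))$ have to be pushed hardest. To treat atoms and the singular continuous part of $\bar\mu$ uniformly, I expect to relabel the Lagrangian variable by an increasing continuous coordinate (such as $\xi\mapsto\xi+\bar F(\xi)$) so that $y$ becomes Lipschitz and the gaps left by atoms are parametrized; the change-of-variables arguments and the measure decomposition in (ii)--(iii) should then go through for a general $\bar\mu\in\mathcal M_+(\mathbb R)$ rather than only for absolutely continuous initial energy.
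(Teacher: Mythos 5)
Your existence construction contains a genuine error in exactly the case the theorem is hardest: singular initial energy. You define $\mu(t)$ as the push-forward of $\bar\mu$ itself under the Eulerian-labelled map $y(t,\cdot)$. If $\bar\mu$ has an atom, say $\bar\mu(\{x_0\})=m>0$, then $\bigl(y(t,\cdot)\#\bar\mu\bigr)(\{y(t,x_0)\})\ge m$ for every $t$, i.e.\ the push-forward keeps the atom as an atom for all time, because a point mass is transported to the single point $y(t,x_0)$ (an endpoint of the gap) rather than being spread over the gap. This contradicts your own, correct, claim two sentences later that the atom ``spreads its mass into the absolutely continuous density $u_x=2/t$ across the gap'', and the resulting pair $(u,\mu)$ violates condition (v) of Definition \ref{def:weak}: inside the gap your affine interpolation gives $u_x^2=4/t^2>0$, while your measure assigns the gap's interior zero mass, so $\di\mu_{ac}(t)=u_x^2\di x$ fails; by the uniqueness part of the theorem such a pair cannot be the conservative solution (the atom must instantly regularize for $t\ne 0$). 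The spreading can only be implemented by making the reparametrization your \emph{definition}, not a closing remark: this is precisely the paper's $\bar{x}(\alpha)$ of \eqref{eq:barx1}, the density $f=1-\bar{x}'$, the characteristics $y(\alpha,t)$ of \eqref{eq:xbarbeta}, and $\mu(t)=y(\cdot,t)\#(f\di\alpha)$ as in \eqref{eq:measuresolutionmu}; there an atom of mass $m$ corresponds to an $\alpha$-interval of length $m$ on which $y_\alpha=t^2/4>0$ for $t\neq0$, which is what converts it into absolutely continuous energy. The whole structural analysis (the decomposition via Lemma \ref{lmm:keylemma} and items (ii)--(iv)) has to be run in that variable; your Eulerian formulas are only valid when $\bar\mu\ll\mathcal{L}$.

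The second gap is uniqueness: the theorem asserts it, and your proposal only identifies it as ``the hard part'' without an argument. The paper's proof is substantive: Lemma \ref{lmm:keylemma1} extracts from the weak energy identity \eqref{eq:fourth}, with sliding cut-off test functions, two-sided bounds on the transport of cumulative energy, and Theorem \ref{thm:main1} uses these to show that \emph{any} conservative solution admits characteristics satisfying \eqref{eq:flowmap}--\eqref{eq:energyconserved}, which forces them to equal $y(\alpha,t)$ and hence forces $(v,\nu)=(u,\mu)$. Without an argument of this kind the theorem is only half-proved. On the positive side, your countability argument for (iii) --- disjointness of the level sets $\{\bar u_x=c\}$ together with $\sum_{c\ne0}c^2\,\mathcal{L}(\{\bar u_x=c\})\le\|\bar u_x\|_{L^2}^2<\infty$ --- is correct and more elementary than the paper's proof, which tags the pure-point intervals by rationals and invokes Lemma \ref{lmm:twoclaims} (i) for the singular continuous part; this is a genuine simplification worth keeping.
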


%{\color{blue} 
%	\begin{remark}
%		It is worth noting that 
%		\begin{equation*}
%		\bar{u}(x)= \left\{
%		\begin{aligned}
%		\int_{0}^{x} \frac{\sin t}{t} \di t, ~ x\ge 0,\\
%	- \int_{x}^{0} \frac{\sin t}{t} \di t, ~ x<0,
%		\end{aligned}\right.
%		\end{equation*}
%		satisfies both $\bar{u}\in C_b(\mathbb{R})$ and $\bar{u}_x\in L^2(\mathbb{R})$, but is only locally (and not globally) absolutely continuous as $\bar{u}_x$ is not in $L^1(\mathbb{R})$. Moreover, it is known that (from Dirichlet integral) $\lim_{x\to \pm \infty} \bar{u} (x) =\pm \frac{\pi}{2}$. Since the solution $u$ must be the same as the initial data $\bar{u}$ at the initial time, the regularity of $u$ obtained in Theorem~\ref{thm:introduce} (iv) is optimal.
%	\end{remark}
%}

Let us start with the absolutely continuous initial data (as described below) to illustrate our main strategies to prove the above results. Consider an initial datum $(\bar{u},\bar{\mu})\in \mathcal{D}$. When the initial energy measure $\bar{\mu}$ is absolutely continuous with respect to Lebesgue measure $\mathcal{L}$ (i.e., $\bar{\mu}\ll \mathcal{L}$ and $\di \bar{\mu}=\bar{u}_x^2\di x$), we can apply the usual Lagrangian coordinates $(\xi,t)$ to define the flow map via $\partial_tX(\xi,t)=u(X(\xi,t),t)$ and $X(\xi,0)=\xi$, and obtain the following global characteristics (see also \eqref{eq:X} below):
\begin{align}\label{eq:flowmap1}
X(\xi,t)=\xi+\bar{u}(\xi)t+\frac{t^2}{4} \int_{(-\infty,\xi)}\bar{u}_x^2(y)\di y=\xi+\bar{u}(\xi)t+\frac{t^2}{4}\bar{\mu}((-\infty,\xi)).
\end{align}
Since $X_\xi(\xi,t)=[1+\frac{t}{2}\bar{u}_x(\xi)]^2\geq 0$, $X(\cdot,t)$ is an increasing function for any fixed $t$. The solution $(u,\mu)$ can be recovered by $u(X(\xi,t),t)=\partial_tX(\xi,t)$ and $ \mu(t)=X(\cdot,t)\#(\bar{u}_x^2 \di x) $.  Then, we can apply an {elementary} result for push-forward measures (see Lemma \ref{lmm:keylemma} for instance) to {decompose} the measure $\mu(t)$ into its absolutely continuous part, pure point part, and singular continuous part by using the derivative $X_\xi(\cdot,t)$. More precisely, 
the singular part of $\mu(t)$ is determined by $\{\xi:~~X_\xi(\xi,t)=0\}$, or equivalently $\{x:~~\bar{u}_x(x)=-\frac{2}{t}\}$; in particular, 
the intervals in $\{x:~~\bar{u}_x(x)=-\frac{2}{t}\}$ generate the pure point part of $\mu(t)$, and the rest points in  $\{x:~~\bar{u}_x(x)=-\frac{2}{t}\}$ corresponds to the singular continuous part of $\mu(t)$.
The absolutely continuous part of $\mu(t)$ is given by $\{\xi:~~X_\xi(\xi,t)>0\}$. 

The Lagrangian coordinates $(\xi,t)$ are applicable to all absolutely continuous initial data (i.e. the initial energy measure $\bar{\mu}$ is absolutely continuous with respect to Lebesgue measure). For such an initial datum $(\bar{u},\bar{\mu})$, since there is no point mass at any particular point $\xi$, the cumulative energy distribution in \eqref{eq:flowmap1} satisfies 
\begin{align}\label{eq:ced}
\bar{\mu}((-\infty,\xi))=\bar{\mu}((-\infty,\xi])=\int_{(-\infty,\xi)}\bar{u}_x^2(y)\di y.
\end{align}
However, if the initial energy measure $\bar{\mu}$ contains any pure point part, then the above relation is not true and it is impossible to obtain global flow map $X$ in the Lagrangian coordinates $(\xi,t)$.
To overcome the difficulty brought by the singular parts of $\bar{\mu}$, we will apply another set of variables used in \cite{Bressan2010,antonio2019lipschitz,bressan35uniqueness}. We ``flatten" the singular part of $\bar{\mu}$ with the help of  an increasing Lipschitz function $\bar{x}:\mathbb{R} \to \mathbb{R}$ defined by
\[
\bar{x}(\alpha)+\bar{\mu}((-\infty, \bar{x}(\alpha)))\leq \alpha \leq \bar{x}(\alpha)+\bar{\mu}((-\infty, \bar{x}(\alpha)]),\quad \alpha\in\mathbb{R}.
\]
Then, the function $f(\alpha):=1-\bar{x}'(\alpha)\geq0$ will play the role of the energy density in the $\alpha$-variable; see Remark \ref{rmk:cumulative} (i) for the explanation. We actually have $\bar{\mu} = \bar{x}\#(f\di\alpha)$ in this case; see Proposition \ref{pro:keypro} for further details. Since $\bar{x}$ is increasing and $f\in L^1(\mathbb{R})$, the absolutely continuous part of the push-forward measure $\bar{\mu}=\bar{x}\# (f\di\alpha)$ corresponds to $B_0^L=\{\alpha:~~\bar{x}'(\alpha)>0\}$, and the singular part of $\bar{\mu}$ is determined by $A_0^L=\{\alpha:~~\bar{x}'(\alpha)=0\}$; see Lemma \ref{lmm:keylemma} for instance. 
Moreover, similar to $X(\xi, t)$, we can also define the global characteristics $y(\alpha,t)$ in the $\alpha$-variable depending only on the initial datum with an explicit form \eqref{eq:xbarbeta}, and $y(\alpha,t)$ is increasing with respect to $\alpha$ at any particular time $t\in\mathbb{R}$. Then, the global conservative solution $(u,\mu)$ to the generalized framework \eqref{eq:gHS1}-\eqref{eq:gHS3} will be recovered via $u(y(\alpha,t),t)=\partial_ty(\alpha,t)$ and $\mu(t)=y(\cdot,t)\#(f\di\alpha)$. We can also obtain the following important property (see Proposition \ref{pro:keypro2}):
\begin{equation*}
y_{\alpha}(\alpha,t)=\left\{
\begin{aligned}
&\bar{x}'(\alpha)\left[1+\frac{t}{2}\bar{u}_x(\bar{x}(\alpha))\right]^2,\quad \alpha\in B_0^L,\\
&\frac{t^2}{4},\quad \alpha \in A_0^L.
\end{aligned}
\right.
\end{equation*}
The singular part of $\mu(t)$ comes from the set $\{\alpha:~~y_{\alpha}(\alpha,t)=0\}$ in the $\alpha$-coordinate, or equivalently, the set $\{x:~~\bar{u}_x(x)=-\frac{2}{t}\}$ in the Eulerian coordinates. Since $A_0^L$ corresponds to the singular part of the initial datum $\bar{\mu}$, the above formula implies that it will never create the 
 singular part of $\mu(t)$  for any $t\neq0$.
All the properties in Theorem \ref{thm:introduce} and the structure of conservative solutions are obtained with the help of the characteristics $y(\alpha,t)$.
  In a nutshell, the existence and regularity structure of conservative solutions can be shown by using the method of characteristics. The uniqueness of conservative solutions follows from the existence and uniqueness of characteristics for conservative solutions in the sense of Definition~\ref{def:weak} (see Theorem \ref{thm:main1}).
 We also note that the singular continuous part of $\mu(t)$ is inevitable even $\bar{u}$ is a smooth function; see Example \ref{ex:singularremark} for instance.

For the flow map $X(\xi,t)$, we obviously have the semi-group property $X(\cdot,t)=X(\cdot,t-s)\circ X(\cdot,s)$. A similar relation between the above two types of characteristics $X(\xi,t)$ and $y(\alpha,t)$ still holds. Indeed,  %$y(\cdot,t)=X(\cdot,t-s)\circ y(\cdot,s)$, and the semi-group property for the solution pair $(u,\mu)$ based on these two system of characteristics then follows directly; see Theorem \ref{thm:measure} (vi) for details. 
 we have the following theorem:
\begin{theorem}[Semi-group property]\label{thm:2}
Let $(u,\mu)$ be a solution to the generalized framework \eqref{eq:gHS1}-\eqref{eq:gHS3} given by Theorem \ref{thm:introduce} with initial datum $(\bar{u},\bar{\mu})\in\mathcal{D}$.
Consider a time $s\in\mathbb{R}$ such that $\mu(s)$ is absolutely continuous with respect to the Lebesgue measure. Set $\tilde{u}(x)=u(x,s)$, and $X(\xi,t)$ is defined by \eqref{eq:flowmap1} with $\bar{u}$ replaced by $\tilde{u}$ (and $\bar{\mu}((-\infty , x) ) $ is replaced by $\int_{(-\infty, x)} \tilde{u}_x^2(x) \di x$).   Let $y(\alpha,t)$ be the characteristics (in the $\alpha$-coordinate) corresponding to $(\bar{u},\bar{\mu})$, see \eqref{eq:xbarbeta} for its definition. Then, we have
\begin{align*}
\tilde{u}\in C_b(\mathbb{R}), \quad \tilde{u}_{x}\in L^2(\mathbb{R}),\quad \| \tilde{u}_{x}\|_{L^2}=\bar{\mu}(\mathbb{R}).
\end{align*}
For any $t\in\mathbb{R}$, we also have
\begin{align*}
y(\cdot,t)=X(\cdot,t-s)\circ y(\cdot,s), \quad \mu(t)=X(\cdot,t-s)\#(\tilde{u}_x^2\di x),
\end{align*}
and
\begin{align*}
u(x,t)=\frac{\partial}{\partial t}X(\xi,t-s)=\tilde{u}(\xi) +\frac{(t-s)}{2}\int_{(-\infty,\xi)}\tilde{u}_x^2(y)\di y~\textrm{ for }~ x=X(\xi,t-s).
\end{align*}

\end{theorem}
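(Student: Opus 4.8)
The plan is to reduce everything to the two uniqueness statements already available, namely the uniqueness of conservative solutions in Theorem \ref{thm:introduce} and the uniqueness of characteristics in Theorem \ref{thm:main1}, so that essentially no new computation is required. First I would dispose of the opening claim. By the regularity in Definition \ref{def:weak}(i), $u(\cdot,s)\in C_b(\mathbb{R})$ and $u_x(\cdot,s)\in L^2(\mathbb{R})$, so $\tilde u\in C_b(\mathbb{R})$ and $\tilde u_x\in L^2(\mathbb{R})$. Since $\mu(s)$ is absolutely continuous, condition (v) of Definition \ref{def:weak}, that is \eqref{eq:gHS3}, gives $\di\mu(s)=\tilde u_x^2\di x$; combined with the energy conservation $\mu(s)(\mathbb{R})=\bar\mu(\mathbb{R})$ of Remark \ref{rem: mu(t)(mathbb R) is conserved} (equivalently Theorem \ref{thm:introduce}(i)), this yields $\int_\mathbb{R}\tilde u_x^2\di x=\mu(s)(\mathbb{R})=\bar\mu(\mathbb{R})$, i.e. the stated identity for $\|\tilde u_x\|_{L^2}$. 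In particular $(\tilde u,\mu(s))\in\mathcal{D}$ is an absolutely continuous initial datum.

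The key reduction concerns the time-translated pair $(v(\cdot,\tau),\nu(\tau)):=(u(\cdot,s+\tau),\mu(s+\tau))$. I would check that $(v,\nu)$ is itself a global-in-time conservative solution in the sense of Definition \ref{def:weak} with initial datum $(\tilde u,\mu(s))$: every defining clause is invariant under the time shift $t\mapsto s+\tau$, since the weak formulations \eqref{eq:weakformula} and \eqref{eq:fourth} are integrals against arbitrary test functions (apply them to $\phi(\cdot,\cdot-s)$), the regularity (i) is translation invariant, clause (ii) at $\tau=0$ reads $(v(\cdot,0),\nu(0))=(\tilde u,\mu(s))$ with $\di\nu(\tau)=v_x^2\di x$ for a.e.\ $\tau$, and (v) persists. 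Because $(\tilde u,\mu(s))$ is absolutely continuous, the unique such solution furnished by Theorem \ref{thm:introduce} is exactly the one built from the Lagrangian flow map, so that with $X(\xi,\tau)$ defined as in \eqref{eq:flowmap1} (with $\bar u,\bar\mu$ replaced by $\tilde u,\tilde u_x^2\di x$) one has $\nu(\tau)=X(\cdot,\tau)\#(\tilde u_x^2\di x)$ and $v(X(\xi,\tau),\tau)=\partial_\tau X(\xi,\tau)$. Invoking the uniqueness part of Theorem \ref{thm:introduce}, $(v,\nu)$ coincides with this Lagrangian solution. Rewriting in the original time $t=s+\tau$ gives at once $\mu(t)=X(\cdot,t-s)\#(\tilde u_x^2\di x)$, and differentiating $X(\xi,\tau)=\xi+\tilde u(\xi)\tau+\frac{\tau^2}{4}\int_{(-\infty,\xi)}\tilde u_x^2\di y$ in $\tau$ yields the stated formula $u(x,t)=\tilde u(\xi)+\frac{t-s}{2}\int_{(-\infty,\xi)}\tilde u_x^2\di y$ for $x=X(\xi,t-s)$.

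It remains to prove the composition identity $y(\cdot,t)=X(\cdot,t-s)\circ y(\cdot,s)$, and here I would argue through uniqueness of characteristics. Fix $\alpha$. The curve $t\mapsto y(\alpha,t)$ is, by its construction via \eqref{eq:xbarbeta}, a characteristic of $(u,\mu)$ passing through $y(\alpha,s)$ at time $s$. On the other hand, the relation $\partial_\tau X(\xi,\tau)=v(X(\xi,\tau),\tau)$ just established means precisely that $t\mapsto X(\xi,t-s)$ satisfies $\frac{d}{dt}X(\xi,t-s)=u(X(\xi,t-s),t)$, so it is also a characteristic of $(u,\mu)$, passing through $\xi$ at time $s$. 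Taking $\xi=y(\alpha,s)$, both curves are characteristics of the same conservative solution emanating from the common point $(y(\alpha,s),s)$, hence by the uniqueness of characteristics in Theorem \ref{thm:main1} they coincide for all $t$, which is exactly $y(\alpha,t)=X(y(\alpha,s),t-s)$.

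I expect the genuinely delicate point to be the reduction step rather than the formal manipulations: one must verify with care that the time-translated pair satisfies every clause of Definition \ref{def:weak}, in particular the a.e.-in-time identity in (ii) and the well-definedness of $F$ entering \eqref{eq:weakformula} (harmless here because $\mu(s)$, and hence $\nu(0)$, is purely absolutely continuous, but this should be tracked along the time shift). One must also confirm that the uniqueness statements of Theorem \ref{thm:introduce} and Theorem \ref{thm:main1} indeed apply to the characteristic through an arbitrary base point $(y(\alpha,s),s)$, and not merely to the canonical construction emanating from the reference time. Once these are secured, both semi-group identities follow formally as above.
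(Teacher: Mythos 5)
Your overall strategy (time-shift the solution, then invoke uniqueness) is reasonable in outline, but your proof of the composition identity $y(\cdot,t)=X(\cdot,t-s)\circ y(\cdot,s)$ has a genuine gap: Theorem \ref{thm:main1} does \emph{not} assert that two characteristics passing through a common point coincide. Its uniqueness is for curves satisfying \emph{both} the ODE \eqref{eq:flowmap}, anchored at time $0$ by $y_1(\alpha,0)=\bar{x}(\alpha)$, \emph{and} the energy constraint \eqref{eq:energyconserved}; that constraint is what substitutes for the Lipschitz regularity $u$ lacks. Since $u$ is only locally $C^{1/2}$, uniqueness of ODE trajectories through a point is unavailable, and it genuinely fails for this equation: at a time $t_0\in T_p$ with an atom at $x_0$, every curve $y(\beta,\cdot)$ with $\beta\in[z_1(x_0,t_0),z_2(x_0,t_0)]$ solves the ODE and passes through $(x_0,t_0)$, yet these curves are pairwise distinct. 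Hence, to apply Theorem \ref{thm:main1} (or its time-shifted analogue) to the curve $\gamma(t):=X(y(\alpha,s),t-s)$ you must verify the energy constraint with the constant $\alpha-\bar{x}(\alpha)$, which amounts to proving $\int_{(-\infty,y(\alpha,s))}\tilde{u}_x^2(x)\di x=\alpha-\bar{x}(\alpha)$. This is exactly identity \eqref{eq:sub3}, the heart of the paper's own proof (it uses $\mu(s)=y(\cdot,s)\#(f\di\alpha)$, the absolute continuity of $\mu(s)$, and the monotonicity of $y(\cdot,s)$); once it is in hand, both $\gamma$ and $y(\alpha,\cdot)$ are quadratic polynomials in $t$ with equal value, first derivative, and second derivative at $t=s$, so they coincide by elementary algebra and no uniqueness theorem is needed at all. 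You flag this very point in your final paragraph, but flagging it does not fill it; as written, the key step does not follow from anything proved in the paper.

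There is a second, more minor issue in your reduction step: you assert that for the absolutely continuous datum $(\tilde{u},\mu(s))$ the unique solution furnished by Theorem \ref{thm:introduce} ``is exactly the one built from the Lagrangian flow map.'' The paper's existence theorem constructs solutions through the $\alpha$-characteristics \eqref{eq:xbarbeta}, \eqref{eq:measuresolutionu}, \eqref{eq:measuresolutionmu}, not through $X$, and the identification of the two constructions for absolutely continuous data is part of the very statement being proved (cf.\ Remark \ref{rmk:cumulative}(ii), which defers this relation to Theorem \ref{thm:measure}(vi)); quoting it is circular unless you supply the short change of variables $\alpha=\xi+\tilde{F}(\xi)$, under which $\tilde{x}(\alpha)=\xi$, $y_s(\alpha,\tau)=X(\xi,\tau)$, and $y_s(\cdot,\tau)\#(f_s\di\alpha)=X(\cdot,\tau)\#(\tilde{u}_x^2\di x)$ by \eqref{eq:fpush}. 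The paper sidesteps both issues by pure computation: it first proves \eqref{eq:sub3}, then verifies the composition, push-forward, and velocity formulas directly from the explicit expressions \eqref{eq:xbarbeta} and \eqref{eq:X}. Your first paragraph (the claims about $\tilde{u}$) is correct and essentially the paper's argument.
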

Global characteristics similar to $y(\alpha,t)$ were also used in \cite{Bressan2010,antonio2019lipschitz} to construct the semi-group property of conservative solutions to the Hunter-Saxton equation, and obtain the Lipschitz metrics for stability. In \cite{bressan35uniqueness}, similar variables were used to show the uniqueness of conservative solutions to the Camassa-Holm equation via the characteristics method. 
There is another different set of variables  defined by 
\[
\bar{k}(\eta)=\inf\{x:~~\bar{\mu}((-\infty,x])\geq\eta\},
\]
provided that a Radon measure $\bar{\mu}\in\mathcal{M}_+(\mathbb{R})$ is given. Global characteristics $k(\eta,t)$ can also be defined globally via initial datum $(\bar{u},\bar{\mu})$; see \cite{bressan2007asymptotic,antonio2019lipschitz} for instance.
In \cite{bressan2007asymptotic}, Bressan, Zhang and Zheng used this kind of characteristics to study the following more general model:
\[
u_t+g(u)_x=\frac{1}{2}\int_0^xg''(u)u_x^2\di x.
\]
Here, the flux $g$ is a function with a Lipschitz continuous second-order derivative such that $g''(0)>0$. When $g(u)=u^2/2$, the above equation becomes  the Hunter-Saxton equation. They obtained global existence and uniqueness of conservative solutions on the half real line by using the above characteristics for compactly supported initial measure $\bar{\mu}$. This kind of characteristics was also used in \cite{antonio2019lipschitz} to study the stability of conservative solutions to the Hunter-Saxton equation.
With the above map $\bar{k}$, one has $\bar{\mu}=\bar{k}\# \mathcal{L}|_{[0,\bar{\mu}(\mathbb{R})]}.$  Comparing with $\bar{x}$ (which is globally Lipschitz) that we use in this paper, the function $\bar{k}$, however,  has potential jump discontinuities.

Except for conservative solutions, the Hunter-Saxton equation also has a class of weak solutions called  dissipative solutions which dissipate the energy. Hunter and Zheng \cite{hunter1995nonlinear1,hunter1995nonlinear} established the global existence of both dissipative and conservative weak solutions to \eqref{eq:HSD} with initial data $\bar{u}_x\in BV(0,\infty)$.  Then, Zhang and Zheng  \cite{zhang1998oscillations,zhang1999existence} studied the global existence of dissipative weak solutions to \eqref{eq:HSD} with nonnegative initial data $\bar{u}_x\in L^p(\mathbb{R})$  for any $p \geq2$. Later in \cite{zhang2000existence}, they obtained global solutions (in both the dissipative and conservative cases) to the Hunter-Saxton equation on the half-line with any initial data $\bar{u}_x\in L^2(\mathbb{R})$ by the theory of Young measures. In \cite{bressan2005global}, Bressan and Constantin rewrote the equation in terms of new variables, and obtained global existence and uniqueness of dissipative solutions. The uniqueness of dissipative solutions were also studied by the characteristics methods  in \cite{cieslak2016maximal,dafermos2011generalized,dafermos2012maximal}. 

The rest of this paper is organized as follows. In Section \ref{sec:structure}, we are going to introduce global characteristics and construct global conservative solutions. The structure of these solutions will be studied in details. In Section \ref{sec:existenceuniqueness}, we will show that the construction based on the method of characteristics provides the global solution in the sense of Definition \ref{def:weak}. The uniqueness of conservative solutions will be obtained via the characteristics method as well. In Appendix  \ref{app:real}, we will present some useful facts from real analysis, and construct an absolutely continuous initial datum by Cantor fat set; this initial datum will generate singular continuous part within a finite time.

\section{Structure of conservative  solutions via characteristics}\label{sec:structure}
In this section, we will introduce the characteristics to construct global conservative solutions and study their structure. 

\subsection{Global characteristics} 
To illustrate the idea, we first consider a smooth solution $u$ to the Hunter-Saxton equation \eqref{eq:HS}. 
For any $\beta\in\mathbb{R}$, we define $x(\beta,t)$ by
\begin{align}\label{eq:defx}
x(\beta,t)+\int_{(-\infty, x(\beta,t))}u_x^2(y,t)\di y=\beta,~~t\in\mathbb{R}. 
\end{align}
Differentiating the above identity with respect to $t$ and $\beta$ respectively, we obtain
\begin{equation}\label{eq:dtx&dbetax}
\partial_tx(\beta,t)=\frac{(uu_x^2)(x(\beta,t),t)}{1+u_x^2(x(\beta,t),t)},\qquad \partial_{\beta}x(\beta,t)=\frac{1}{1+u_x^2(x(\beta,t),t)},
\end{equation}
where we used \eqref{eq:conservationlaw} for computing $\partial_tx(\beta,t)$.
Then, for fixed $\alpha\in \mathbb{R}$, we define a function $\beta(t)$ by solving the following ordinary differential equation (ODE):
\begin{align}\label{eq:betat}
\beta'(t)=u(x(\beta(t),t),t),\quad \beta(0)=\alpha\in\mathbb{R}.
\end{align}
It follows from the chain rule, \eqref{eq:dtx&dbetax} and \eqref{eq:betat} that
\begin{align}\label{eq:translation}
\frac{\di }{\di t}x(\beta(t),t)=(\partial_tx+\partial_\beta x\cdot \beta')(\beta(t),t)=u(x(\beta(t),t),t)=\beta'(t).
\end{align}
Differentiating \eqref{eq:betat} with respect to the time $t$ again, we obtain
\begin{equation*}
\begin{aligned}
\beta''(t)=&\partial_tu(x(\beta(t),t),t)+\partial_xu(x(\beta(t),t),t)\frac{\di }{\di t}x(\beta(t),t)=(u_t+uu_x)(x(\beta(t),t),t)\\
=&\frac{1}{2}\int_{(-\infty,x(\beta(t),t))}u_x^2(y,t)\di y=\frac{1}{2}[\beta(t)-x(\beta(t),t)],
\end{aligned}
\end{equation*}
where the last equality comes from \eqref{eq:defx}.
Taking one more time derivative and using \eqref{eq:translation}, we have
\begin{align}\label{eq:thirdorder}
\beta'''(t)=0.
\end{align}
Denote $\bar{x}(\alpha)=x(\alpha,0)$, which is uniquely determined by
\begin{align*} 
\bar{x}(\alpha)+\int_{(-\infty, \bar{x}(\alpha))}u_x^2(y,0)\di y=\alpha.
\end{align*}
Then, we have the following initial data for the ODE \eqref{eq:thirdorder}: 
\begin{align}\label{eq:initial}
\beta(0)=\alpha,\quad \beta'(0)=u(\bar{x}(\alpha),0),\quad \beta''(0)=\frac{1}{2}(\alpha-\bar{x}(\alpha))=\frac{1}{2}\int_{(-\infty, \bar{x}(\alpha))}u_x^2(y,0)\di y.
\end{align}
Therefore, solving \eqref{eq:thirdorder} and \eqref{eq:initial} yields
\begin{align}\label{eq:global1}
\beta(t)=\alpha+u(\bar{x}(\alpha),0)t+\frac{t^2}{4}(\alpha-\bar{x}(\alpha)),
\end{align}
and hence, using \eqref{eq:translation}, we also have 
\begin{align}\label{eq:global2}
x(\beta(t),t)=\bar{x}(\alpha)+u(\bar{x}(\alpha),0)t+\frac{t^2}{4}(\alpha-\bar{x}(\alpha)).
\end{align}
Therefore, to obtain global formulas for $\beta(t)$ and $x(\beta(t),t)$, we only need the information of initial datum $u(x,0)$. 
The above idea can be generalized to non-smooth\footnote{Here, a non-smooth pair $(\bar{u},\bar{\mu})\in\mathcal{D}$ means that the initial data $\bar{\mu}$ is not absolutely continuous with respect to the Lebesgue measure $\mathcal{L}$.} initial data in $\mathcal{D}$ as described below.

Consider an initial datum $(\bar{u},\bar{\mu})\in\mathcal{D}$. For any $\alpha\in\mathbb{R}$, we can also define the function $\bar{x}$ via
\begin{align}\label{eq:barx1}
\bar{x}(\alpha)+\bar{\mu}((-\infty, \bar{x}(\alpha)))\leq \alpha \leq \bar{x}(\alpha)+\bar{\mu}((-\infty, \bar{x}(\alpha)]). 
\end{align}
If follows directly from the above definition of $\bar{x}$ that $\bar{x}(\alpha)\le \alpha$ for all $\alpha\in\mathbb{R}$.
As an analogy to \eqref{eq:global1} and  \eqref{eq:global2}, we also have global-in-time $\beta(t)$ and $x(\beta(t),t)$ for non-smooth initial datum in $\mathcal{D}$ as follows: 
\begin{align}\label{eq:global3}
\beta(t)=\alpha+\bar{u}(\bar{x}(\alpha))t+\frac{t^2}{4}(\alpha-\bar{x}(\alpha)),
\end{align}
and
\begin{align}\label{eq:global4}
x(\beta(t),t)=\bar{x}(\alpha)+\bar{u}(\bar{x}(\alpha))t+\frac{t^2}{4}(\alpha-\bar{x}(\alpha)).
\end{align}
Hence, we can think of $x(\beta(t),t)$ as a function of $\alpha$ and $t$, then we define $y(\alpha, t)$ as follows:
\begin{align}\label{eq:xbarbeta}
y(\alpha,t)=\bar{x}(\alpha)+\bar{u}(\bar{x}(\alpha))t+\frac{t^2}{4}(\alpha-\bar{x}(\alpha)).
\end{align}

Now, let us first show some properties for initial data in $\mathcal{D}$ as follows.
\begin{proposition}\label{pro:keypro}
Let $(\bar{u},\bar{\mu})\in\mathcal{D}$ and  $\bar{\mu}=\bar{\mu}_{ac}+\bar{\mu}_{pp}+\bar{\mu}_{sc},$
where $\bar{\mu}_{ac}$, $\bar{\mu}_{pp}$, and $\bar{\mu}_{sc}$ are the absolutely continuous part, pure point part and the singular continuous part of $\bar{\mu}$ respectively.  Define a map $\bar{x}:\mathbb{R}\to\mathbb{R}$ by \eqref{eq:barx1}.
Then the following statements hold:
\begin{enumerate}
\item [(i)] The function $\bar{x}$ is Lipschitz continuous  with Lipschitz constant bounded by $1$. 
\item [(ii)] Definition \eqref{eq:barx1} is the same as 
\begin{align}\label{eq:barx2}
	\bar{x}(\alpha)=\sup\left\{x:~x+\bar{\mu}((-\infty,x))\leq\alpha \right\},
\end{align}
and
\begin{align}\label{eq:barx3}
	\bar{x}(\alpha)=\inf\left\{x:~x+\bar{\mu}((-\infty,x])\geq \alpha \right\}.
\end{align}
\item [(iii)] For $\alpha\in\mathbb{R}$, we define
\begin{align}\label{eq:importantf}
	f(\alpha)=1-\bar{x}'(\alpha). 
\end{align}
Then 
\begin{align}\label{eq:fpush}
	\bar{x}\#(f\di\alpha)=\bar{\mu},\qquad \|f\|_{L^1}=\bar{\mu}(\mathbb{R}).
\end{align}
\item [(iv)] Let 
\[
A_0^{L,pp}=\{\alpha:~~\bar{x}'(\alpha)=0,~~z_1(\bar{x}(\alpha))<z_2(\bar{x}(\alpha))\},
\]
and
\begin{align*}
A_0^{L,sc}=\{\alpha:~~\bar{x}'(\alpha)=0,~~z_1(\bar{x}(\alpha))=z_2(\bar{x}(\alpha))\},\quad B_0^L=\{\alpha:~~\bar{x}'(\alpha)>0\},
\end{align*}
where $z_1(x)=\inf\{\alpha:~~\bar{x}(\alpha)=x\}$ and $z_2(x)=\sup\{\alpha:~~\bar{x}(\alpha)=x\}.$
Then, we have
\begin{align}\label{eq:decomp}
	\bar{\mu}_{pp}=\bar{x}\#(f|_{A_0^{L,pp}}\di \alpha),\quad \bar{\mu}_{sc}=\bar{x}\#(f|_{A_0^{L,sc}} \di \alpha),\quad \bar{\mu}_{ac}=\bar{x}\#(f|_{B_0^L} \di \alpha).
\end{align}
Moreover,
\begin{align}\label{eq:importantrelation}
	\bar{u}_x^2(\bar{x}(\alpha))\bar{x}'(\alpha)=f(\alpha),\quad \alpha\in B_0^L.
\end{align}

\end{enumerate}

\begin{remark}
\begin{enumerate}
\item [(i)]
In this paper, the superscript $L$ on a set $A^L$ means that we consider the set in the Lagrangian coordinates, while the superscript $E$ on a set $A^E$ means that we consider the set in the Eulerian coordinates.
\item [(ii)]
It is worth noting that $A_0^{L,pp}$ is defined in the point-wise sense,  however, $A_0^{L,sc}$ and $B_0^L$ are defined in the a.e. sense. 
Moreover, when we say some identities hold for $\alpha\in B_0^L$ or $\alpha\in A^{L,sc}_0$ (such as \eqref{eq:importantrelation},  \eqref{eq:falpha} or \eqref{eq:separation} below), we always mean that the identities hold for a.e.  $\alpha\in B_0^L$ or $\alpha\in A^{L,sc}_0$.
Since this will not affect the analysis, we will omit the emphasize of a.e. in  this paper for convenience. Similar treatment also applies to $A_t^{L,pp}$, $A_t^{L,sc}$, and $B_t^L$, which will be defined in Theorem~\ref{thm:measure} below.
\end{enumerate}
\end{remark}

\end{proposition}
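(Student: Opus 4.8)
The whole proof is organized around the two strictly increasing auxiliary functions
\begin{align*}
G_-(x) = x + \bar{\mu}((-\infty,x)), \qquad G(x) = x + \bar{\mu}((-\infty,x]),
\end{align*}
which are left- and right-continuous respectively, with $G(x) - G_-(x) = \bar{\mu}(\{x\})$. The defining relation \eqref{eq:barx1} then reads $G_-(\bar{x}(\alpha)) \le \alpha \le G(\bar{x}(\alpha))$, so $\bar{x}(\alpha)$ is characterized as the unique $x$ whose closed ``slot'' $[G_-(x),G(x)]$ contains $\alpha$. Existence and uniqueness follow from $G(x_1) \le G_-(x_2)$ for $x_1 < x_2$ (slots overlap in at most an endpoint) together with $G(x) \to \pm\infty$ as $x \to \pm\infty$. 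For (i), monotonicity of $\bar{x}$ is immediate from this slot description; for the unit Lipschitz bound I would take $\alpha_1 < \alpha_2$ and combine $\alpha_1 \le G(\bar{x}(\alpha_1))$ with $\alpha_2 \ge G_-(\bar{x}(\alpha_2))$ to get $\alpha_2 - \alpha_1 \ge (\bar{x}(\alpha_2)-\bar{x}(\alpha_1)) + (\bar{\mu}((-\infty,\bar{x}(\alpha_2))) - \bar{\mu}((-\infty,\bar{x}(\alpha_1)]))$, where the measure difference is nonnegative, yielding $0 \le \bar{x}(\alpha_2) - \bar{x}(\alpha_1) \le \alpha_2 - \alpha_1$.

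Part (ii) is the identification of $\bar{x}$ with the generalized inverses of $G_-$ and $G$: from \eqref{eq:barx1} and the one-sided continuity of these functions I would prove the two equivalences $\bar{x}(\alpha) \le x \iff \alpha \le G(x)$ and $x \le \bar{x}(\alpha) \iff G_-(x) \le \alpha$, whence the sup and inf in \eqref{eq:barx2}--\eqref{eq:barx3}. Part (iii) then follows by comparing cumulative distributions. The first equivalence gives $\{\alpha : \bar{x}(\alpha) \le x\} = (-\infty, G(x)]$; since $\bar{x}$ is Lipschitz, hence absolutely continuous, $\int_{\alpha_1}^{\alpha_2} f \,\di\alpha = (\alpha_2 - \alpha_1) - (\bar{x}(\alpha_2) - \bar{x}(\alpha_1))$, and using $\bar{x}(G(x)) = x$ (the right endpoint of the slot of $x$) together with $\alpha - \bar{x}(\alpha) \to 0$ as $\alpha \to -\infty$ shows the CDF of $\bar{x}\#(f\di\alpha)$ equals $\bar{\mu}((-\infty,x])$; as finite measures are determined by their CDFs, $\bar{x}\#(f\di\alpha) = \bar{\mu}$. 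Letting instead $\alpha_2 \to +\infty$, where $\alpha_2 - \bar{x}(\alpha_2) \to \bar{\mu}(\mathbb{R})$, yields $\|f\|_{L^1} = \bar{\mu}(\mathbb{R})$; note $0 \le f \le 1$ since $0 \le \bar{x}' \le 1$ a.e.

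For (iv) I would first record the fiber structure: since $\bar{x}$ is continuous and nondecreasing, each nonempty fiber $\{\alpha : \bar{x}(\alpha) = x\}$ is exactly the slot $[G_-(x), G(x)] = [z_1(x), z_2(x)]$, of length $G(x) - G_-(x) = \bar{\mu}(\{x\})$, so $z_1(x) < z_2(x)$ precisely when $x$ is an atom of $\bar{\mu}$. Partitioning $\mathbb{R}$ up to a null set into $A_0^{L,pp} \sqcup A_0^{L,sc} \sqcup B_0^L$, on which $f = 1$, $f = 1$, and $0 \le f < 1$ respectively, I would treat the three push-forwards in \eqref{eq:decomp} separately. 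Over an atom $x$ the open fiber $(z_1(x), z_2(x)) \subseteq A_0^{L,pp}$ carries mass $\int 1\,\di\alpha = z_2(x) - z_1(x) = \bar{\mu}(\{x\})$, and summing over the countably many atoms gives $\bar{x}\#(f|_{A_0^{L,pp}}\di\alpha) = \bar{\mu}_{pp}$. On $B_0^L$, at a.e.\ $\alpha$ that is simultaneously a differentiability point of $\bar{x}$ and a Lebesgue point of the density $\bar{u}_x^2$ of $\bar{\mu}_{ac}$, I would compute the $\bar{\mu}$-mass of the image interval $\bar{x}([\alpha,\alpha+h])$ in two ways, namely as $\int_{\alpha}^{\alpha+h} f\,\di\alpha$ by \eqref{eq:fpush} and as $\bar{u}_x^2(\bar{x}(\alpha))\,(\bar{x}(\alpha+h)-\bar{x}(\alpha))$ via the absolutely continuous density, and divide by $h$ before letting $h\to 0^+$; this simultaneously proves \eqref{eq:importantrelation} and identifies $\bar{x}\#(f|_{B_0^L}\di\alpha)$ with the measure of density $\bar{u}_x^2$, i.e.\ $\bar{\mu}_{ac}$. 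The remaining piece is then forced: by (iii) the three push-forwards sum to $\bar{\mu}$, so $\bar{x}\#(f|_{A_0^{L,sc}}\di\alpha) = \bar{\mu}_{sc}$, and I would independently confirm it is singular continuous, having no atoms (every fiber over $A_0^{L,sc}$ is a single point, $z_1 = z_2$) and being carried by $\bar{x}(\{\bar{x}' = 0\})$, a Lebesgue-null set.

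I expect the real work to sit in (iv), specifically the Lebesgue differentiation argument for \eqref{eq:importantrelation} and the careful pointwise-versus-a.e.\ bookkeeping for the sets $A_0^{L,pp}, A_0^{L,sc}, B_0^L$ flagged in the Remark. A cleaner, essentially equivalent route is to invoke the elementary push-forward decomposition Lemma \ref{lmm:keylemma}: its hypotheses (an increasing Lipschitz map with $f = 1 - \bar{x}' \in L^1$, and the dichotomy $\bar{x}' = 0$ versus $\bar{x}' > 0$) are exactly what parts (i)--(iii) supply, so (iv) would then reduce to verifying these hypotheses and matching the absolutely continuous density produced by the lemma against $\bar{u}_x^2$ through \eqref{eq:importantrelation}.
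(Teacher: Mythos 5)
Your parts (i)--(iii) follow the paper's proof in all essentials: your ``slot'' relation $G_-(\bar{x}(\alpha))\le\alpha\le G(\bar{x}(\alpha))$ is just \eqref{eq:barx1} rewritten, your Lipschitz estimate is the same two-line computation, your generalized-inverse equivalences are a cleaner packaging of the paper's contradiction arguments for \eqref{eq:barx2}--\eqref{eq:barx3}, and your CDF comparison in (iii) (using $\bar{x}(G(x))=x$ and the limits $\alpha-\bar{x}(\alpha)\to 0$, $\to\bar{\mu}(\mathbb{R})$ as $\alpha\to\mp\infty$) is a cosmetic variant of the paper's computation of $\bar{\mu}((a,b))=\int_{\bar{x}^{-1}((a,b))}f\di\alpha$ on open intervals. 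The genuine divergence is in (iv). The paper gets \eqref{eq:decomp} by citing Lemma \ref{lmm:keylemma} outright, and derives \eqref{eq:importantrelation} by testing $\bar{\mu}_{ac}=\bar{x}\#(f|_{B_0^L}\di\alpha)$ against $\varphi\in C_b(\mathbb{R})$ and changing variables $x=\bar{x}(\alpha)$ in $\int\varphi\,\bar{u}_x^2\di x$; this is precisely your ``cleaner route'' in the last paragraph, so that version of your argument coincides with the paper's.

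Your primary route for (iv) --- Lebesgue differentiation of the mass of image intervals --- is different, and as sketched it has two unresolved points, both of which you flag but neither of which you carry out. First, $\bar{\mu}\bigl(\bar{x}([\alpha,\alpha+h])\bigr)$ is in general \emph{larger} than $\int_{\alpha}^{\alpha+h}f\di\alpha$: the preimage of the image is $[z_1(\bar{x}(\alpha)),z_2(\bar{x}(\alpha+h))]$, and when $\bar{x}(\alpha+h)$ is an atom this swallows that atom's whole slot. One can show the excess is $o(h)$ whenever $\bar{x}'(\alpha)>0$ exists (compare the difference quotients at $z_1(\bar{x}(\alpha+h))$ and $z_2(\bar{x}(\alpha+h))$, whose gap is the atom's mass), but this check is needed and is absent. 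Second, your argument requires that for a.e.\ $\alpha\in B_0^L$ the point $\bar{x}(\alpha)$ is a Lebesgue point of $\bar{u}_x^2$ and a point of vanishing density for the singular part of $\bar{\mu}$; the exceptional set of such $x$ is $\mathcal{L}$-null, but you must transfer this to an $\mathcal{L}$-null set of $\alpha$'s, i.e.\ a Luzin--$N^{-1}$-type property: if $\mathcal{L}(N)=0$ then $\mathcal{L}(\{\alpha\in B_0^L:\bar{x}(\alpha)\in N\})=0$. This does hold, since the area formula gives
\begin{equation*}
\int_{\{\alpha\in B_0^L:\,\bar{x}(\alpha)\in N\}}\bar{x}'(\alpha)\di\alpha\le\int_{N}\#\bigl(\bar{x}^{-1}(y)\bigr)\di y=0,
\end{equation*}
which forces the set to be null because $\bar{x}'>0$ on $B_0^L$; but this is exactly the ``pointwise-versus-a.e.\ bookkeeping'' where a differentiation proof can silently fail, and it should be written down. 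Since your fallback via Lemma \ref{lmm:keylemma} avoids both issues and is the paper's actual proof, I would promote it to the main argument and keep the differentiation computation as motivation.
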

\begin{proof}
Since $x\mapsto x+\bar{\mu}((-\infty,x])$ is strictly increasing with possible jumps, $\bar{x}:\mathbb{R}\to\mathbb{R}$ is a non-decreasing function.

(i) Let $\alpha_1,\alpha_2\in\mathbb{R}$ and $\alpha_1<\alpha_2$. According to \eqref{eq:barx1}, we have
\begin{align*}
0\leq \bar{x}(\alpha_2)-\bar{x}(\alpha_1)\leq &\alpha_2-\bar{\mu}((-\infty, \bar{x}(\alpha_2)))-\alpha_1+\bar{\mu}((-\infty, \bar{x}(\alpha_1)])\\
\leq&\alpha_2-\alpha_1-\bar{\mu}((\bar{x}(\alpha_1),\bar{x}(\alpha_2)))\leq\alpha_2-\alpha_1.
\end{align*}
Hence, $\bar{x}$ is Lipschitz continuous with Lipschitz constant bounded by $1$.

(ii) First, we show that definition \eqref{eq:barx1} is equivalent to  definition \eqref{eq:barx2}. Fix $\alpha\in\mathbb{R}$. Let $x_1$ and $x_2$ satisfy
\[
x_1+\bar{\mu}((-\infty, x_1))\leq \alpha \leq x_1+\bar{\mu}((-\infty, x_1]),
\]
and
\[
x_2=\sup\left\{x|x+\bar{\mu}((-\infty,x))\leq \alpha \right\}.
\]
Obviously, we have $x_2\geq x_1$ and $x_2+\bar{\mu}((-\infty,x_2))\leq \alpha$. If $x_1<x_2$, then
\[
\alpha \leq x_1+\bar{\mu}((-\infty, x_1])< x_2+\bar{\mu}((-\infty, x_2)),
\]
which gives a contradiction.

Next, we show that definition \eqref{eq:barx1} is equivalent to  definition \eqref{eq:barx3}. Fix $\alpha\in\mathbb{R}$. Let $x_1$ and $x_3$ satisfy
\[
x_1+\bar{\mu}((-\infty, x_1))\leq \alpha \leq x_1+\bar{\mu}((-\infty, x_1]),
\]
and
\[
x_3=\inf\left\{x|x+\bar{\mu}((-\infty,x])\geq \alpha \right\}.
\]
Obviously, we have $x_3\leq x_1$ and $x_3+\bar{\mu}((-\infty,x_3])\geq \alpha$. If $x_3<x_1$, then
\[
\alpha \geq x_1+\bar{\mu}((-\infty, x_1))> x_3+\bar{\mu}((-\infty, x_3]),
\]
which gives a contradiction.

(iii) It  suffices to show 
\[
\bar{\mu}((a,b))=\int_{\bar{x}^{-1}((a,b))}f\di\alpha,
\]
for any open interval $(a,b)\subset\mathbb{R}$. Let
\[
\alpha_1:=\inf\{\alpha:~~\bar{x}(\alpha)>a\},\quad \alpha_2:=\sup\{\alpha:~~\bar{x}(\alpha)< b\}.
\]
Since $\bar{x}$ is continuous, we have $\bar{x}(\alpha_1)=a$ and $\bar{x}(\alpha_2)=b$. Hence, we have $\bar{x}^{-1}((a,b))=(\alpha_1,\alpha_2)$,
\[
a+\bar{\mu}((-\infty,a))\leq\alpha_1\leq a+\bar{\mu}((-\infty,a]),
\]
and
\[
b+\bar{\mu}((-\infty,b))\leq\alpha_2\leq b+\bar{\mu}((-\infty,b]).
\]
We claim that
\begin{align}\label{eq:claim2}
\alpha_1 = a+\bar{\mu}((-\infty,a]),\quad \alpha_2=b+\bar{\mu}((-\infty,b)).
\end{align}
We only prove the first one in \eqref{eq:claim2} and the other one can be obtained similarly. If 
\[
a+\bar{\mu}((-\infty,a)) \leq\alpha_1 < a+\bar{\mu}((-\infty,a])=:\tilde{\alpha}_1,
\]
then
\[
\bar{x}(\alpha)=a,\quad \alpha\in[\alpha_1,\tilde{\alpha}_1],
\]
and
\[
\bar{x}(\alpha)>a,\quad \alpha>\tilde{\alpha}_1.
\]
This implies a contradiction:
\[
\inf\{\alpha:~~\bar{x}(\alpha)>a\}=\tilde{\alpha}_1>\alpha_1.
\]
Hence, the claim \eqref{eq:claim2} holds. We have
\begin{align*}
\int_{\bar{x}^{-1}(a,b)}f\di\alpha=\int_{\alpha_1}^{\alpha_2}[1-\bar{x}'(\alpha)]\di\alpha=&\alpha_2-\alpha_1-\bar{x}(\alpha_2)+\bar{x}(\alpha_1)\\
=&\bar{\mu}((-\infty,b))-\bar{\mu}((-\infty,a])=\bar{\mu}((a,b)).
\end{align*}

Next, we prove $\|f\|_{L^1}=\bar{\mu}(\mathbb{R})$.
By definition \eqref{eq:barx1} of $\bar{x}(\alpha)$, we have
\[
\bar{\mu}((-\infty, \bar{x}(\alpha)))\leq \alpha-\bar{x}(\alpha) \leq \bar{\mu}((-\infty, \bar{x}(\alpha)]),\quad \alpha\in\mathbb{R}.
\]
It is easy to see that $\bar{x}(\alpha)\to \pm\infty$ as $\alpha\to\pm \infty$. Therefore, 
\begin{align}\label{eq:fact}
\lim_{\alpha\to-\infty}[\alpha-\bar{x}(\alpha)]=0,\quad \lim_{\alpha\to+\infty}[\alpha-\bar{x}(\alpha)]=\bar{\mu}(\mathbb{R}).
\end{align}
This implies $\|f\|_{L^1}=\bar{\mu}(\mathbb{R})$.

(iv) Relations \eqref{eq:decomp} directly follow from Lemma \ref{lmm:keylemma}.
Furthermore, since $\bar{x}\#(f|_{B_0^L}\di  \alpha)=\bar{\mu}_{ac}$, we actually have, for any test function $\varphi\in C_b(\mathbb{R})$,
\[
\int_{\mathbb{R}}\varphi(x)\di \bar{\mu}_{ac}=\int_{\mathbb{R}}\varphi(\bar{x}(\alpha))f|_{B_0^L}(\alpha)\di \alpha.
\]
On the other hand, since $(\bar{u},\bar{\mu})\in\mathcal{D}$, we have $\di \bar{\mu}_{ac} = \bar{u}_x^2\di x$, and hence,
\[
\int_{\mathbb{R}}\varphi(x)\di \bar{\mu}_{ac}=\int_{\mathbb{R}}\varphi(x)\bar{u}_x^2(x)\di x=\int_{\mathbb{R}}\varphi(\bar{x}(\alpha))\bar{u}_x^2(\bar{x}(\alpha))\bar{x}'(\alpha)\di \alpha.
\]
Combining the above two identities, we obtain \eqref{eq:importantrelation}. 
\end{proof}

From Proposition \ref{pro:keypro}, we have
\begin{equation}\label{eq:falpha}
f(\alpha)=\left\{
\begin{aligned}
&\bar{u}_x^2(\bar{x}(\alpha))\bar{x}'(\alpha),\quad \alpha\in B_0^L,\\
&1,\quad \alpha\in A_0^L=A_0^{L,pp}\cup A_0^{L,sc}.
\end{aligned}\right.
\end{equation}
This means we use the increasing function $\bar{x}(\alpha)$ to transform the singular part of $\bar{\mu}$ to some constant part of $f(\alpha)$. For example, if $m\delta_x$ is a pure point part of $\bar{\mu}$, we have $z_1(x)<z_2(x)$ and $\bar{x}(\alpha)=x$ for $\alpha\in [z_1(x),z_2(x)]$. Moreover, we also have $f(\alpha)=1$ for $\alpha\in [z_1(x),z_2(x)]$ and
\[
m=\bar{\mu}(\{x\})=\int_{\bar{x}^{-1}(\{x\})}f(\alpha)\di\alpha=\int_{[z_1(x),z_2(x)]} \di\alpha=z_2(x)-z_1(x).
\]
Hence, the function $\bar{x}$ makes the singular part of $\bar{\mu}$ supported at one point $x$ with mass $m$ uniformly distributed on the interval $[z_1(x),z_2(x)]$ with length $m$.

We have the following important results about global characteristics $y(\alpha,t)$:
\begin{proposition}\label{pro:keypro2}
Let $(\bar{u},\bar{\mu})\in\mathcal{D}$. Let $\bar{x}:\mathbb{R}\to\mathbb{R}$ be defined by \eqref{eq:barx1} and $y(\alpha,t)$ be defined by \eqref{eq:xbarbeta} for $\alpha, ~t\in\mathbb{R}$. Then, for any $t\in\mathbb{R}$, $y(\cdot,t)$ is increasing and we have
\begin{equation}\label{eq:separation}
y_{\alpha}(\alpha,t)=\left\{
\begin{aligned}
&\bar{x}'(\alpha)\left[1+\frac{t}{2}\bar{u}_x(\bar{x}(\alpha))\right]^2,\quad \alpha\in B_0^L,\\
&\frac{t^2}{4},\quad \alpha \in A_0^L.
\end{aligned}
\right.
\end{equation}
Here, the sets $A_0^L$ and $B_0^L$ are the same as in \eqref{eq:falpha}.
\end{proposition}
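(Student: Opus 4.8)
The plan is to reduce the whole statement to a chain-rule computation of $y_\alpha(\cdot,t)$, carried out separately on the three pieces $B_0^L$, $A_0^{L,pp}$, $A_0^{L,sc}$ produced by Proposition~\ref{pro:keypro}. First I observe that for each fixed $t$ the map $\alpha\mapsto y(\alpha,t)$ in \eqref{eq:xbarbeta} is absolutely continuous: $\bar{x}$ is Lipschitz by Proposition~\ref{pro:keypro}(i), the term $\tfrac{t^2}{4}\alpha$ is smooth, and the composition $\bar{u}\circ\bar{x}$ is absolutely continuous because $\bar{u}\in C_b(\mathbb{R})$ with $\bar{u}_x\in L^2$ is locally absolutely continuous while $\bar{x}$ is monotone and Lipschitz (continuity, local bounded variation, and Luzin's condition (N) are all inherited by the composition). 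Hence $y_\alpha(\cdot,t)$ exists a.e., and $y(\cdot,t)$ is recovered by integrating it; so it suffices to identify $y_\alpha$ a.e.\ and check its sign.

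Differentiating \eqref{eq:xbarbeta} gives
\[
y_\alpha(\alpha,t)=\bar{x}'(\alpha)+t\,\frac{d}{d\alpha}\bigl[\bar{u}(\bar{x}(\alpha))\bigr]+\frac{t^2}{4}\bigl(1-\bar{x}'(\alpha)\bigr),
\]
so everything rests on evaluating $\tfrac{d}{d\alpha}[\bar{u}(\bar{x}(\alpha))]$ on each piece. On $B_0^L=\{\bar{x}'>0\}$ the ordinary chain rule holds a.e.: the set of $y$ at which $\bar{u}$ fails to be differentiable (equivalently, is not a Lebesgue point of $\bar{u}_x$) is Lebesgue-null, and its preimage under $\bar{x}$ meets $\{\bar{x}'>0\}$ in a null set by the area formula, so $\tfrac{d}{d\alpha}[\bar{u}(\bar{x}(\alpha))]=\bar{u}_x(\bar{x}(\alpha))\bar{x}'(\alpha)$ there. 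Substituting this and then invoking the key identity \eqref{eq:importantrelation}, namely $\bar{u}_x^2(\bar{x}(\alpha))\bar{x}'(\alpha)=f(\alpha)=1-\bar{x}'(\alpha)$, the three terms combine into the perfect square $\bar{x}'(\alpha)\,[1+\tfrac{t}{2}\bar{u}_x(\bar{x}(\alpha))]^2$, which is the first line of \eqref{eq:separation}.

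On $A_0^L=\{\bar{x}'=0\}$ I must show $\tfrac{d}{d\alpha}[\bar{u}(\bar{x}(\alpha))]=0$ a.e., after which the displayed formula collapses to $y_\alpha=t^2/4$. On the point-mass part $A_0^{L,pp}$ this is immediate: there $\bar{x}$ is constant on each plateau $[z_1(x),z_2(x)]$ of positive length, so $\bar{u}(\bar{x}(\alpha))$ is locally constant and its derivative vanishes off the countable (hence null) set of plateau endpoints. The delicate case is the singular-continuous part $A_0^{L,sc}$, where $\bar{x}$ has zero derivative yet is injective ($z_1=z_2$). Here the plan is a variation argument: first establish $\mathcal{L}(\bar{x}(A_0^{L,sc}))=0$, using that a monotone Lipschitz map satisfies $\mathcal{L}(\bar{x}(E))\le\int_E\bar{x}'\,d\alpha$ together with $\bar{x}'=0$ on this set; then, since $\bar{u}$ is absolutely continuous and $\bar{x}$ maps $A_0^{L,sc}$ order-preservingly and injectively onto this null image, bound $\int_{A_0^{L,sc}}\bigl|\tfrac{d}{d\alpha}\bar{u}(\bar{x}(\alpha))\bigr|\,d\alpha$ by the total variation of $\bar{u}$ over $\bar{x}(A_0^{L,sc})$, which equals $\int_{\bar{x}(A_0^{L,sc})}|\bar{u}_x|\,dy=0$ because the image is null. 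Thus the derivative vanishes a.e.\ on $A_0^{L,sc}$, completing \eqref{eq:separation}.

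Finally, monotonicity of $y(\cdot,t)$ comes for free: both branches of \eqref{eq:separation} are manifestly nonnegative (a square times $\bar{x}'\ge0$, respectively $t^2/4\ge0$), so $y_\alpha\ge0$ a.e., and combined with the absolute continuity of $y(\cdot,t)$ this yields that $y(\cdot,t)$ is non-decreasing. I expect the main obstacle to be precisely the $A_0^{L,sc}$ estimate: because $\bar{u}_x$ is only $L^2$ rather than bounded, the naive difference-quotient bound (Cauchy--Schwarz over the shrinking image intervals) is inconclusive, and one genuinely needs the nullity of $\bar{x}(A_0^{L,sc})$ together with the absolute continuity of $\bar{u}$ to annihilate the contribution of the singular-continuous set.
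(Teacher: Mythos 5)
Your proof is correct and follows the same route as the paper: the paper's entire argument is the formal differentiation of \eqref{eq:xbarbeta} followed by substitution of \eqref{eq:falpha}, which is exactly your core computation, including the perfect-square regrouping on $B_0^L$ and the collapse to $t^2/4$ on $A_0^L$. What you add --- absolute continuity of $\bar{u}\circ\bar{x}$ via Banach--Zarecki, the a.e.\ chain rule on $B_0^L$ via the area formula, and the null-image argument showing $\frac{\di}{\di\alpha}\bigl[\bar{u}(\bar{x}(\alpha))\bigr]=0$ a.e.\ on $A_0^{L,sc}$ --- is precisely the measure-theoretic justification the paper leaves implicit (it silently uses the generalized chain rule, interpreting $\bar{u}_x(\bar{x}(\alpha))\bar{x}'(\alpha)$ as $0$ wherever $\bar{x}'(\alpha)=0$), and your justifications are sound.
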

\begin{proof}
Differentiating \eqref{eq:xbarbeta} with respect to $\alpha$ yields
\[
y_{\alpha}(\alpha,t)=\bar{x}'(\alpha)+\bar{u}_x(\bar{x}(\alpha))\bar{x}'(\alpha)t+\frac{t^2}{4}f(\alpha).
\]
Due to \eqref{eq:falpha} and $\bar{x}'(\alpha)=0$ for $\alpha\in A_0^L$, we have \eqref{eq:separation}.
\end{proof}

Let us end this subsection with some remarks about the flow map, provided that the initial data $\bar{\mu}$ is absolutely continuous with respect to the Lebesgue measure.
\begin{remark}\label{rmk:cumulative}
Consider some initial datum $(\bar{u},\bar{\mu})\in\mathcal{D}$ such that $\di\bar{\mu} = \bar{u}_x^2\di x$.
\begin{enumerate}
\item [(i)]  
The cumulative energy distribution is defined by 
\begin{align}\label{eq:defF}
\bar{F}(x)=\bar{\mu}((-\infty,x))=\bar{\mu}((-\infty,x])= \int_{(-\infty,x)}\bar{u}_x^2(y)\di y.
\end{align}
Obviously,
\begin{align}\label{eq:Ffarfield}
\bar{F}(-\infty)=0~\textrm{ and }~\bar{F}(+\infty)=\|\bar{u}_x\|_{L^2}^2.
\end{align} 
In the $\alpha$-variable, the function $f$ given by \eqref{eq:importantf} plays the role of the energy density in the $\alpha$-variable, and the cumulative energy distribution is given by
\[
\int_{(-\infty,\alpha)}f(\alpha)\di\alpha=\alpha-\bar{x}(\alpha) = \bar{F}(\bar{x}(\alpha)).
\]
\item[(ii)] Since $\bar{\mu}$ is absolutely continuous, there is no need to use the variable $\alpha$ to define the cumulative energy distribution $\bar{F}$. Indeed, we could also use the usual flow map to define the characteristics, which is defined by 
\begin{equation}\label{eq:Lagrange}
\left\{
\begin{aligned}
&\frac{\partial}{\partial t}X(\xi,t)=u(X(\xi,t),t),~~\xi\in\mathbb{R},~~t>0,\\
&X(\xi,0)=\xi.
\end{aligned}
\right.
\end{equation}
Taking time derivative again and using the energy conservation condition, we could formally derive the following global trajectories by similar method to \eqref{eq:global1} or \eqref{eq:global2}:
\begin{align}\label{eq:X}
X(\xi,t)=\xi+\bar{u}(\xi)t+\frac{t^2}{4} \bar{F}(\xi).
\end{align}
We will discuss about the relation between $y(\alpha,t)$ (given by \eqref{eq:xbarbeta}) and $X(\xi,t)$ in the next subsection.

\end{enumerate}

\end{remark}

\subsection{Structure of solutions} 
In this subsection and in particularly, Theorem \ref{thm:measure}, we will first define $u$ and $\mu$ via \eqref{eq:measuresolutionu} and \eqref{eq:measuresolutionmu} for a given initial datum $(\bar{u}, \bar{\mu})$ as well as study their properties;  we state and prove our main results for the structure of  $({u}, {\mu})$ using the variable $\alpha$, which covers Theorem \ref{thm:introduce} and Theorem \ref{thm:2}, except that, we will postpone to show that  they are a global-in-time conservative solution to the generalized framework \eqref{eq:gHS1}-\eqref{eq:gHS3} in the sense of Definition \ref{def:weak} with initial datum $(\bar{u},\bar{\mu})$ in Section~\ref{sec:existenceuniqueness}, see  Theorem~\ref{thm:main} . 
\begin{theorem}\label{thm:measure}
Let $(\bar{u},\bar{\mu})\in\mathcal{D}$,  $\bar{x}(\alpha)$, and $f(\alpha)$  be the same as in Proposition \ref{pro:keypro}. Let $y(\alpha,t)$ be defined by \eqref{eq:xbarbeta}, and 
$z_1(x,t)=\inf\{\alpha:~~y(\alpha,t)=x\},~~z_2(x,t)=\sup\{\alpha:~~y(\alpha,t)=x\}$ be two pseudo-inverses of $y(\cdot,t)$ for a fixed $t\in\mathbb{R}$. For any fixed $t\in\mathbb{R}$, let 
\[
A_t^{L,pp}=\{\alpha:~~y_\alpha(\alpha,t)=0,~~z_1(y(\alpha,t),t)<z_2(y(\alpha,t),t)\},
\]
\begin{align*}
A_t^{L,sc}=\{\alpha:~~y_\alpha(\alpha,t)=0,~~z_1(y(\alpha,t),t)=z_2(y(\alpha,t),t)\},\quad\mbox{and}\quad B_t^L=\{\alpha:~~y_\alpha(\alpha,t)>0\}.
\end{align*}
For any $t\in\mathbb{R}$, define 
\begin{align}\label{eq:measuresolutionu}
u(x,t)=\frac{\partial}{\partial t}y(\alpha,t)=\bar{u}(\bar{x}(\alpha))+\frac{t}{2}(\alpha-\bar{x}(\alpha))~\textrm{ for }~ x=y(\alpha,t),
\end{align}
and
\begin{equation}\label{eq:measuresolutionmu}
\mu(t)=y(\cdot,t)\# (f\di \alpha).
\end{equation}
Let $\mu(t)=\mu_{ac}(t)+\mu_{pp}(t)+\mu_{sc}(t)$, where $\mu_{ac}(t)$, $\mu_{pp}(t)$, and $\mu_{sc}(t)$ are the absolutely continuous part, pure point part and the singular continuous part of ${\mu}(t)$ respectively.
Then we have
\begin{enumerate}
\item [$\bullet$] \textbf{Properties of $\mu(t)$:}
\item [(i)] Energy conservation: we have $\mu(t) \in C(\mathbb{R}; \mathcal{M}_+(\mathbb{R}))$ and
\begin{align}\label{eq:conser}
\mu(t)(\mathbb{R})= \bar{\mu}(\mathbb{R}),\quad t\in\mathbb{R}.
\end{align}
\item [(ii)] 
For the structure of $\mu(t)$, we have
\begin{align}\label{eq:decompmut}
\mu_{pp}(t)=y(\cdot,t)\#(f|_{A_t^{L,pp}}\di \alpha),\quad \mu_{sc}(t)=y(\cdot,t)\#(f|_{A_t^{L,sc}}\di \alpha),\quad \mu_{ac}(t)=y(\cdot,t)\#(f|_{B_t^L}\di \alpha).
\end{align}
Moreover, for any $t\neq 0$, we have
\begin{align}\label{eq:singulart}
A_t^{L,pp}\cup A_t^{L,sc}=\left\{\alpha\in B_0^L:~~\bar{u}_x(\bar{x}(\alpha))=-\frac{2}{t}\right\}:=A^L_t,
\end{align}
where $B_0^L$ was defined in Proposition \ref{pro:keypro} (iv). This implies that for any $t\neq0$, $\mu_{pp}(t)$ and $\mu_{sc}(t)$ are determined by the absolutely continuous part of $\bar{\mu}_{ac}$, i.e., $\bar{u}_x$. Moreover,
\[
\mathrm{supp}\Big(\mu_{pp}(t)+\mu_{sc}(t)\Big)\subset \Big\{x+\bar{u}(x)t+\frac{t^2}{4}\bar{\mu}((-\infty,x)):~ x\in A_t^E\Big\},
\]
where $A_t^E$ is defined by \eqref{eq:singulartE}.
\item [(iii)] There are at most countably many time $t\in \mathbb{R}$  such that either the pure point part or the singular continuous part of $\mu(t)$ is not zero; in other words, both the sets
\begin{align}\label{eq:singulartime}
T_p:=\{t:~\mu_{pp}(t)\neq 0\}\quad\mbox{and}\quad	
T_s:=\{t:~\mu_{sc}(t)\neq 0\}
\end{align}
are countable. 

\

\item [$\bullet$] \textbf{Properties of $u(x,t)$:}
\item [(iv)] For all time $t\in\mathbb{R}$, the function $u(\cdot,t)$ is globally absolutely continuous and  
\begin{align}\label{eq:acpart}
\di\mu_{ac}(t)=u_x^2(x,t)\di x.
\end{align}
Moreover,
\begin{align}\label{eq:propertiesu}
u\in C(\mathbb{R};C_b(\mathbb{R}))\cap C^{1/2}_{loc}(\mathbb{R}\times\mathbb{R}),~~ u_x\in L^\infty(\mathbb{R};L^2(\mathbb{R})),~~ u_t\in L_{loc}^2(\mathbb{R}\times \mathbb{R}).
\end{align}
\item [(v)] If $\bar{u}(-\infty):=\lim_{x\to-\infty}\bar{u}(x)$ exists, then we  have
\begin{align}\label{eq:far}
	\lim_{x\to-\infty}u(x,t)= \bar{u}(-\infty).
\end{align}
On the other hand, if $\bar{u}(+\infty):=\lim_{x\to+\infty}\bar{u}(x)$ exists, then we also have
\begin{align}\label{eq:rightfar}
	\lim_{x\to+\infty}u(x,t)= \bar{u}(+\infty)+\frac{1}{2}\bar{\mu}(\mathbb{R})t.
\end{align}

\item [$\bullet$] \textbf{Relations with the absolutely continuous part:}
\item [(vi)] Consider a time $s\in\mathbb{R}$ such that $\mu(s)$ is absolutely continuous with respect to the Lebesgue measure. Let $\tilde{u}(x)=u(x,s)$, and $X(\xi,t)$ and $\tilde{F}$  be defined by \eqref{eq:X} and \eqref{eq:defF} corresponding to $\tilde{u}$. Then, we have
\begin{align}\label{eq:propertiestilteu}
\tilde{u}\in C_b(\mathbb{R}), \quad \tilde{u}_{x}\in L^2(\mathbb{R}),\quad \| \tilde{u}_{x}\|_{L^2}=\bar{\mu}(\mathbb{R}).
\end{align}
For any $t\in\mathbb{R}$, we also have
\begin{align}\label{eq:composition}
y(\cdot,t)=X(\cdot,t-s)\circ y(\cdot,s), 
\end{align}
\begin{align}\label{eq:startfromL2}
\mu(t)=X(\cdot,t-s)\#(\tilde{u}_x^2\di x),
\end{align}
and
\begin{align}\label{eq:startfromL22}
u(x,t)=\frac{\partial}{\partial t}X(\xi,t-s)=\tilde{u}(\xi) +\frac{(t-s)}{2}\tilde{F}(\xi),\quad \textrm{for $x=X(\xi,t-s)$} .
\end{align}
\end{enumerate}
\end{theorem}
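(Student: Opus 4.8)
The plan is to exploit the fact that every object in the statement is given by an \emph{explicit} formula: both $u$ and $\mu(t)$ are built from the single characteristic $y(\alpha,t)$ of \eqref{eq:xbarbeta}, whose $\alpha$-derivative is known in closed form through \eqref{eq:separation}. Consequently almost all assertions reduce to combining the push-forward decomposition Lemma \ref{lmm:keylemma} with \eqref{eq:separation}. I would begin with (i): since pushing a measure forward preserves total mass, $\mu(t)(\mathbb{R})=\int_{\mathbb{R}}f\,\di\alpha=\|f\|_{L^1}=\bar\mu(\mathbb{R})$ by Proposition \ref{pro:keypro}(iii), which is \eqref{eq:conser}. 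Weak continuity $\mu\in C(\mathbb{R};\mathcal{M}_+(\mathbb{R}))$ follows because for each fixed $\phi\in C_b(\mathbb{R})$ the map $t\mapsto\int_{\mathbb{R}}\phi(y(\alpha,t))f(\alpha)\,\di\alpha$ is continuous by dominated convergence, as $y$ is polynomial in $t$ and $f\in L^1$.

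For (ii) I would apply Lemma \ref{lmm:keylemma} to $\mu(t)=y(\cdot,t)\#(f\,\di\alpha)$ exactly as Proposition \ref{pro:keypro}(iv) does for $\bar\mu$; this yields \eqref{eq:decompmut} straight from the definitions of $A_t^{L,pp}$, $A_t^{L,sc}$, $B_t^L$. The identification \eqref{eq:singulart} is then bookkeeping with \eqref{eq:separation}: for $t\ne0$ the set $A_0^L$ has $y_\alpha=t^2/4>0$ and so contributes only to $B_t^L$, while on $B_0^L$ one has $y_\alpha=\bar x'(\alpha)[1+\tfrac t2\bar u_x(\bar x(\alpha))]^2$, which vanishes precisely when $\bar u_x(\bar x(\alpha))=-2/t$. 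The support inclusion then follows by substituting $\bar u_x(\bar x(\alpha))=-2/t$ and $\alpha-\bar x(\alpha)=\bar\mu((-\infty,\bar x(\alpha)))$ into $y(\alpha,t)$ and comparing with $A_t^E$ as in \eqref{eq:singulartE}.

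Part (iii) is the cleanest conceptual step: for $t\ne0$ the total singular mass of $\mu(t)$ equals $\int_{A_t^L}f\,\di\alpha$, and the sets $\{A_t^L\}_{t\ne0}$ are pairwise disjoint, being the preimages under $\bar u_x\circ\bar x$ of the distinct values $-2/t$. Since $\sum_{t\ne0}\int_{A_t^L}f\,\di\alpha\le\|f\|_{L^1}<\infty$, only countably many $t$ can carry positive singular mass, so $T_p\cup T_s$ is countable after adjoining the single time $t=0$. For (v), the far-field limits \eqref{eq:far} and \eqref{eq:rightfar} come directly from \eqref{eq:fact}: as $\alpha\to-\infty$ one has $\bar x(\alpha)\to-\infty$ and $\alpha-\bar x(\alpha)\to0$, while as $\alpha\to+\infty$ one has $\bar x(\alpha)\to+\infty$ and $\alpha-\bar x(\alpha)\to\bar\mu(\mathbb{R})$, and then $u(y(\alpha,t),t)=\bar u(\bar x(\alpha))+\tfrac t2(\alpha-\bar x(\alpha))$ converges to the asserted values.

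The technical heart is (iv), and I expect the well-definedness and absolute continuity of $u(\cdot,t)$ to be the main obstacle, since $y(\cdot,t)$ is only weakly increasing. First, $u$ must be shown single-valued on each plateau of $y(\cdot,t)$: differentiating \eqref{eq:measuresolutionu} gives $\partial_\alpha[\partial_t y]=\bar u_x(\bar x(\alpha))\bar x'(\alpha)+\tfrac t2 f(\alpha)$, and substituting $\bar u_x(\bar x(\alpha))=-2/t$ and $f=(4/t^2)\bar x'$ on the singular set $A_t^L$ shows this vanishes there, so $\partial_t y$ is constant on each plateau and $u(x,t)$ is well defined. The same quotient $\partial_\alpha[\partial_t y]/y_\alpha$ yields the closed form $u_x=\bar u_x(\bar x(\alpha))/(1+\tfrac t2\bar u_x(\bar x(\alpha)))$ on $B_0^L\cap B_t^L$ and $u_x=2/t$ on $A_0^L$; I would then verify $y(\cdot,t)\#(u_x^2\,\di x)=\mu_{ac}(t)$, establishing \eqref{eq:acpart} together with global absolute continuity. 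The remaining regularity \eqref{eq:propertiesu} follows from these explicit expressions and the $L^2$ control on $u_x$ from (i): boundedness of $\partial_t y$ gives $u\in C(\mathbb{R};C_b(\mathbb{R}))$, the energy bound gives $u_x\in L^\infty(\mathbb{R};L^2(\mathbb{R}))$, and the $C^{1/2}_{loc}$ and $u_t\in L^2_{loc}$ estimates are read off from the formulas. Finally, for (vi) I would note $\tilde u\in C_b$, $\tilde u_x\in L^2$ from (iv) at time $s$ with $\int\tilde u_x^2=\mu(s)(\mathbb{R})=\bar\mu(\mathbb{R})$, giving \eqref{eq:propertiestilteu}, and then prove \eqref{eq:composition} by the direct identity $X(y(\alpha,s),t-s)=y(\alpha,t)$: using $\tilde u(y(\alpha,s))=\partial_s y(\alpha,s)$ and $\tilde F(y(\alpha,s))=\alpha-\bar x(\alpha)$ (valid since $\mu(s)$ is absolutely continuous), the coefficients of $\bar x(\alpha)$, $\bar u(\bar x(\alpha))$ and $\alpha-\bar x(\alpha)$ match because $\tfrac{s^2}{4}+\tfrac s2(t-s)+\tfrac{(t-s)^2}{4}=\tfrac{t^2}{4}$. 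Then \eqref{eq:startfromL2} follows from functoriality of push-forward and \eqref{eq:startfromL22} from the same substitution.
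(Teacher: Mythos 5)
Your arguments for (i), (ii), (v) and (vi) are essentially the paper's own: mass preservation of push-forwards plus dominated convergence for (i); Lemma \ref{lmm:keylemma} combined with \eqref{eq:separation} for \eqref{eq:decompmut} and \eqref{eq:singulart}; the limits \eqref{eq:fact} for (v); and for (vi) the same substitutions $\tilde u(y(\alpha,s))=\partial_s y(\alpha,s)$, $\tilde F(y(\alpha,s))=\alpha-\bar x(\alpha)$ together with the identity $\frac{s^2}{4}+\frac s2(t-s)+\frac{(t-s)^2}{4}=\frac{t^2}{4}$. Your (iii) is a genuinely different and arguably cleaner route: since the sets $A_t^L$, $t\neq0$, are pairwise disjoint and the singular mass of $\mu(t)$ equals $\int_{A_t^L}f\di\alpha$ (positive whenever the singular part is nonzero, as $f=\frac{4}{t^2+4}>0$ there), finiteness of $\|f\|_{L^1}$ forces countability of $T_p\cup T_s$ in one stroke. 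The paper instead argues separately: rational-number indexing of the plateau intervals for $T_p$, and Lemma \ref{lmm:twoclaims}(i) applied to $\mathcal{L}(A_t^{L,sc})$ for $T_s$. Both are correct; yours replaces the Lebesgue measure by the finite measure $f\di\alpha$, which removes the need for the localization step inside Lemma \ref{lmm:twoclaims}(i).

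The genuine gap is in (iv), at exactly the point you flag as the main obstacle. Your observation that $\partial_\alpha[\partial_t y]=\bar u_x(\bar x(\alpha))\bar x'(\alpha)+\frac t2f(\alpha)$ vanishes on $A_t^L$ (the paper's \eqref{eq:importantrelation1}) correctly gives well-definedness of $u$ on the plateaus, and your pointwise formulas for $u_x$, equivalently $u_x^2(y(\alpha,t),t)\,y_\alpha(\alpha,t)=f(\alpha)$ on $B_t^L$, are the right ones. But you then claim that verifying the push-forward identity establishes ``\eqref{eq:acpart} together with global absolute continuity.'' This does not follow: knowing that $u(\cdot,t)$ has an a.e.\ pointwise derivative whose square has the correct push-forward relation says nothing about whether $u$ equals the integral of that derivative --- a Cantor-type singular part of $u(\cdot,t)$ itself is precisely what must be excluded (the Cantor function has a.e.\ derivative $0\in L^2$). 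The danger is not hypothetical here: for $t\in T_p\cup T_s$ the map $y(\cdot,t)$ collapses the positive-measure set $A_t^L$ onto a Lebesgue-null set, so its pseudo-inverse $z_1(\cdot,t)$ fails the Luzin N property and is \emph{not} absolutely continuous (Remark \ref{rmk:thm21}(iv), Example \ref{ex:singularremark}); that the composition $u(\cdot,t)=g\circ z_1(\cdot,t)$, with $g(\alpha)=\bar u(\bar x(\alpha))+\frac t2(\alpha-\bar x(\alpha))$, is nonetheless absolutely continuous requires a quantitative argument and not just the vanishing of $g'$ on $A_t^L$.

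The paper supplies exactly this: for finitely many disjoint intervals $(x_k,y_k)$ it bounds $\sum_k|u(y_k,t)-u(x_k,t)|\le\int_{D^L}\left|\bar u_x(\bar x(\alpha))\bar x'(\alpha)+\frac t2f(\alpha)\right|\di\alpha$ with $D^L=\cup_k[\alpha_k,\beta_k]\setminus A_t^L$, and then controls $\mathcal{L}(D^L)=\mathcal{L}(z_1(D^E,t))\le\int_{D^E}z_{1x}(x,t)\di x$, so that small $\sum_k(y_k-x_k)$ forces small $\mathcal{L}(D^L)$; absolute continuity of the Lebesgue integral gives local absolute continuity, and Lemma \ref{lem:locally and global absolutely continuous} upgrades it to global. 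An alternative completion of your plan would be a change-of-variables (Banach indicatrix) argument showing directly that $u(x_2,t)-u(x_1,t)=\int_{x_1}^{x_2}w(x)\di x$ for your candidate derivative $w$; either way, some such argument is indispensable and is missing from your sketch. (A minor notational point: $y(\cdot,t)\#(u_x^2\di x)$ is backwards, since $y(\cdot,t)$ pushes forward measures in the $\alpha$-variable; the identity to verify is $\mu_{ac}(t)=y(\cdot,t)\#(f|_{B_t^L}\di\alpha)=u_x^2\di x$.)
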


\begin{proof}
(i) 
For the continuity of measure $\mu(t)$, we take any bounded continuous function $\phi(x)$ and using Lebesgue dominated convergence theorem to obtain
\begin{equation*}
\lim_{s\to t}\int_{\mathbb{R}}\phi(x)\di \mu(s) =\lim_{s\to t} \int_{\mathbb{R}}\phi(y(\alpha,s)) f(\alpha) \di \alpha = \int_{\mathbb{R}}\phi(y(\alpha,t)) f(\alpha) \di \alpha =\int_{\mathbb{R}}\phi(x)\di \mu(t).
\end{equation*}	
%\begin{equation*}
%\begin{aligned}
%\left|\int_{\mathbb{R}}\phi(x)\di \mu(s) - \int_{\mathbb{R}}\phi(x)\di \mu(t) \right|\\
%\int_{\mathbb{R}}\phi(y(\alpha,t)) f(\alpha) \di \alpha =
%\end{aligned}
%\end{equation*}	
The conservation of total variation  of $\mu(t)$ (i.e.  \eqref{eq:conser}) follows directly from \eqref{eq:fpush}.

(ii) From Lemma \ref{lmm:keylemma}, we have decomposition \eqref{eq:decompmut}.
 
From \eqref{eq:separation}, we know that the singular part of $\mu(t)$ comes from the set \eqref{eq:singulart}. Hence, all the singular parts of $\mu(t)$ for $t\neq0$ are determined by the function $\bar{u}_x$, or equivalently, the absolutely continuous part of $\bar{\mu}$. 
More precisely, the singular parts come from  $A_t^E$ defined by \eqref{eq:singulartE}, and
\[
\mathrm{supp}\Big(\mu_{pp}(t)+\mu_{sc}(t)\Big)\subset y(A_t^L,t)=\Big\{x+\bar{u}(x)t+\frac{t^2}{4}\bar{\mu}((-\infty,x)):~ x\in A_t^E\Big\}.
\]

(iii)  
Since $A^{L,pp}_t$ consists of closed intervals with positive length, we can always pick a rational number from each interval. Due to  $\bar{u}_x(\bar{x}(\alpha))=-\frac{2}{t}$ for all $\alpha\in A^{L,pp}_t$, for different $t\in T_p$, the chosen rational numbers are different because the closed intervals at different times are different. Therefore, using the chosen rational numbers as indices of these intervals, we know that there are at most countable numbers in $T_p$. 

Let $t\in T_s$. Since $\mu_{sc}(t)\neq 0$, it follows from \eqref{eq:decompmut} that $\mathcal{L}(A_t^{L,sc})> 0$. Furthermore, for $t_1,t_2\in T_s$ with $t_1\neq t_2$, we have $A_{t_1}^{L,sc}\cap A_{t_2}^{L,sc}=\emptyset$, since  $\bar{u}_x(\bar{x}(\alpha))=-\frac{2}{t}$ for all  $\alpha\in A^{L,sc}_t$.  
We obtain the desired result by applying Lemma \ref{lmm:twoclaims} (i).

(iv) $\bullet$ Proof of the local absolutely continuity of $u(\cdot,t)$ for $t\in\mathbb{R}$.

We first prove that $u(\cdot,t)$ is locally absolutely continuous.
Consider $m$ non-overlapping intervals $\{(x_k,y_k)\}_{k=1}^m$ contained in $(-R, R)$ for some $R>0$. Let $x_k=y(\alpha_k,t)$ and $y_k=y(\beta_k,t)$. Using \eqref{eq:measuresolutionu} and the fundamental theorem of calculus, we have
\begin{align*}
\sum_{k=1}^m|u(y_k,t)-u(x_k,t)|=&\sum_{k=1}^m\left|\bar{u}(\bar{x}(\beta_k))+\frac{t}{2}(\beta_k-\bar{x}(\beta_k)) -\bar{u}(\bar{x}(\alpha_k))-\frac{t}{2}(\alpha_k-\bar{x}(\alpha_k))\right|\\
=&\sum_{k=1}^m\left|\int_{[\alpha_k,\beta_k]}\bar{u}_x(\bar{x}(\alpha))\bar{x}'(\alpha)+\frac{t}{2}f(\alpha)\di\alpha\right|.
\end{align*}
Notice that on $A_t^L$ given by \eqref{eq:singulart}, equality \eqref{eq:importantrelation} holds and we have
\begin{align}\label{eq:importantrelation1}
\bar{u}_x(\bar{x}(\alpha))\bar{x}'(\alpha)+\frac{t}{2}f(\alpha)=\bar{u}_x(\bar{x}(\alpha))\bar{x}'(\alpha)\left[1+\frac{t}{2}\bar{u}_x(\bar{x}(\alpha))\right]=0.
\end{align}
Define $D^L=\cup_{k=1}^m[\alpha_k,\beta_k]\setminus A_t^L$. Then, we have
\begin{align}\label{eq:AClebesgue}
\sum_{k=1}^m|u(y_k,t)-u(x_k,t)|\leq \int_{D^L}\left|\bar{u}_x(\bar{x}(\alpha))\bar{x}'(\alpha)+\frac{t}{2}f(\alpha)\right|\di\alpha.
\end{align}
Next, we will prove that for any $\eta>0,$ there exists $\delta>0$ such that if $\sum_{k=1}^m(y_k-x_k)<\delta$, then
\[
\mathcal{L}(D^L)<\eta.
\]
This will prove the absolute continuity of $u(\cdot,t)$ due to the absolute continuity of the Lebesgue integral on the right hand side of \eqref{eq:AClebesgue}.
For this purpose, we define
\[
D^E=\cup_{k=1}^m(x_k,y_k)\setminus y(A_t^L,t).
\]
 Then $z_1(x,t)=z_2(x,t)$ on $D^E$. Moreover, $z_{1x}(\cdot,t)$ exists on $D^E$ and
\[
0<z_{1x}(x,t)=1/y_\alpha(\alpha,t)<\infty,\quad x\in D^E,\quad x=y(\alpha,t).
\]
Let
\[
g(x)=\left\{
\begin{aligned}
&z_{1x}(x,t),~~x\in \mathbb{R} \backslash y(A_t^L,t)\\
&0,~~x\in y(A_t^L,t).
\end{aligned}
\right.
\]
Then, obviously $g\in L^1_{loc}(\mathbb{R})$, since $\int_{(a,b)} g(x) \di x \le \int_{(a,b) }  z_{1x}(x,t) \di x \le z_{1}(b, t) -z_1(a, t)<\infty$ for any $b>a$.
We have the following relation:
\begin{align*}
\mathcal{L}(D^L)=\mathcal{L}(z_1(D^E,t))\leq \int_{D^E}z_{1x}(x,t)\di x= \sum_{k=1}^m\int_{[x_k,y_k]} g(x)\di x,
\end{align*}
where we used \cite[Eq. (4.41)]{benedetto2010integration} for the inequality.
Since $g\in L^1(K)$ for any compact set $K\subset\mathbb{R}$, if $\sum_{k=1}^m(y_k-x_k)\subseteq K$ is sufficiently small (depending on the compact set $K$), then the measure $\mathcal{L}(D^L)$ will be smaller than $\eta$. This shows that $u(\cdot,t)$ is locally absolutely continuous. Moreover, since $u_x(\cdot, t) \in L^2(\mathbb{R})$ (which will be shown below), we can apply Lemma 
\ref{lem:locally and global absolutely continuous} (with $p$ =2) to conclude that $u(\cdot, t)$ is globally absolutely continuous.

$\bullet$ Proof of \eqref{eq:acpart}. 

Since $u(\cdot,t)$ is absolutely continuous, differentiating $u(y(\alpha,t),t)=\bar{u}(\bar{x}(\alpha))+\frac{t}{2}(\alpha-\bar{x}(\alpha))$ with respect to $\alpha$, and then taking a square, we obtain from \eqref{eq:falpha} that
\[
u^2_x(y(\alpha,t),t)y^2_\alpha(\alpha,t)=[\bar{u}_x(\bar{x}(\alpha))\bar{x}'(\alpha)]^2\left[1+\frac{t}{2}\bar{u}_x(\bar{x}(\alpha))\right]^2 \textrm{for $\alpha\in B_0^L$}.
\]
Due to \eqref{eq:separation} and \eqref{eq:falpha},
 we have
\begin{align*}
u^2_x(y(\alpha,t),t)y_\alpha(\alpha,t) =\bar{u}_x^2(\bar{x}(\alpha))\bar{x}'(\alpha) =f(\alpha),~~\alpha\in B^L_t \cap B^L_0.
 \end{align*}

On the other hand, since $\bar{x}'(\alpha) = 0$ on $A^L_0$, we have 
\[
u_x(y(\alpha,t),t)y_\alpha(\alpha,t)= \bar{u}_x(\bar{x}(\alpha))\bar{x}'(\alpha) + \frac{t}{2}(1-\bar{x}'(\alpha)) = \frac{t}{2} ~\textrm{for $\alpha \in A^L_0$}.
\]
Using \eqref{eq:separation} again, we conclude that $u_x(y(\alpha,t),t) = \frac{2}{t}$ on $A^L_0 \subset B^L_t$. Hence, on $A^L_0$, \[
u_x^2(y(\alpha,t),t)y_\alpha(\alpha,t)= \frac{4}{t^2}\frac{t^2}{4} =1 = f(\alpha).
\]
In summary, we have 
\begin{align}\label{eq:acpart2}
u^2_x(y(\alpha,t),t)y_\alpha(\alpha,t)  =f(\alpha),~~\alpha\in B^L_t.
\end{align}
Since $\mu_{ac}(t)=y(\cdot,t)\# (f|_{B^L_t}\di \alpha)$ and $\mathcal{L}(\mathbb{R}\setminus y(B_t^L,t))=0$ (see Lemma \ref{lmm:keylemma}), relation \eqref{eq:acpart} holds.
$\bullet$ Proof of \eqref{eq:propertiesu}.

From the above proof, we have $u\in C(\mathbb{R};C_b(\mathbb{R}))$ and $u_x\in L^\infty(\mathbb{R};L^2(\mathbb{R}))$.
From the definition of $u$, we also have $u_t\in L_{loc}^2(\mathbb{R}\times\mathbb{R})$.
To prove \eqref{eq:propertiesu}, it suffice to show  $u\in C_{loc}^{1/2}(\mathbb{R}\times\mathbb{R})$. Since we have, for any $x\neq y$ and $t\neq s$,
\begin{align*}
	\frac{|u(x,t) -u(y,s)|}{{(|x-y|^2 + |t-s|^2)}^{\frac{1}{4}}} &\leq C\frac{|u(x,t) -u(y,s)|}{|x-y|^\frac{1}{2} + |t-s|^\frac{1}{2}}\\
	&\le C\frac{|u(x,t) -u(y,t)|}{|x-y|^\frac{1}{2}} + C\frac{|u(y,t) -u(y,s)|}{|t-s|^\frac{1}{2}}
\end{align*}
for some constant $C>0$, it is enough to show that $u$ is locally H\"older continuous with order $\frac{1}{2}$ in spatial and temporal variable respectively.
Since $u(\cdot, t) \in H^1_{loc}(\mathbb{R})$ for all time $t\in\mathbb{R}$, we have $u(\cdot, t) \in C^{1/2}_{loc}(\mathbb{R})$. To show the local temporal H\"older continuity, we choose an $\alpha$ such that $y(\alpha,t) = x$ for a given $x \in \mathbb{R}$.  Then using the  spatial H\"older continuity and the definition of $u$, we have 
\begin{align*}
	|u(x,t) -u(x,s)|&\le  |u(y(\alpha,t),t) - u(y(\alpha,s),s)| + |u(y(\alpha,s),s) -u(y(\alpha,t),s)|\\
	&\le \frac{|\alpha-\bar{x}(\alpha)|}{2}|t-s| + C|y(\alpha,s) -y(\alpha,t)|^{\frac{1}{2}}\leq C |t-s|^{\frac{1}{2}}.
\end{align*}

(v) 
This follows from \eqref{eq:fact} and definition \eqref{eq:measuresolutionu}.

(vi) Notice that \eqref{eq:propertiestilteu} follows directly from (i) and (iv). 

For this particular $s$, $\mu(s)=y(\cdot,s)\# (f\di \alpha)$ is absolutely continuous, we have $y_\alpha(\alpha,s)>0$ for a.e. $\alpha\in \mathbb{R}$. Due to \eqref{eq:separation}, this is equivalent to $\bar{u}_x(x)\neq -\frac{2}{s}$ for a.e. $x\in\mathbb{R}$.
From the definitions, we have
\begin{align}\label{eq:sub1}
y(\alpha,s)=\bar{x}(\alpha)+\bar{u}(\bar{x}(\alpha))s+\frac{s^2}{4}(\alpha-\bar{x}(\alpha)),~~\alpha\in\mathbb{R},
\end{align}
and
\begin{align}\label{eq:sub2}
\tilde{u}(x)=u(x,s)=\bar{u}(\bar{x}(\alpha))+\frac{s}{2}(\alpha-\bar{x}(\alpha))~\textrm{ for }~ x=y(\alpha,s).
\end{align}
By (ii) and (iii), we have $\tilde{u}_x^2 \di x=y(\cdot,s)\# (f\di \alpha)$.  Hence,
\begin{align}\label{eq:sub3}
\tilde{F}(y(\alpha,s))=\int_{(-\infty,y(\alpha,s))}\tilde{u}_x^2(x)\di x=\int_{(-\infty,\alpha)} f(\alpha)\di \alpha=\alpha-\bar{x}(\alpha).
\end{align}
By \eqref{eq:X}, we have
\[
X(\xi,t-s)=\xi+\tilde{u}(\xi)(t-s)+\frac{(t-s)^2}{4}\tilde{F}(\xi),~~\xi\in\mathbb{R}.
\]
Hence, combining \eqref{eq:sub1}, \eqref{eq:sub2}, and \eqref{eq:sub3} yields \eqref{eq:composition}: for any $\alpha$, $t\in\mathbb{R}$,
\begin{align*}
X(y(\alpha,s),t-s)&=y(\alpha,s)+\tilde{u}(y(\alpha,s))(t-s)+\frac{(t-s)^2}{4}\tilde{F}(y(\alpha,s))\\
&=\bar{x}(\alpha)+\bar{u}(\bar{x}(\alpha))s+\frac{s^2}{4}(\alpha-\bar{x}(\alpha))+\left[\bar{u}(\bar{x}(\alpha))+\frac{s}{2}(\alpha-\bar{x}(\alpha))\right](t-s)\\
&\quad+\frac{(t-s)^2}{4}(\alpha-\bar{x}(\alpha))\\
&=\bar{x}(\alpha)+\bar{u}(\bar{x}(\alpha))t+\frac{t^2}{4}(\alpha-\bar{x}(\alpha))=y(\alpha,t).
\end{align*}
For \eqref{eq:startfromL2}, we have 
\[
\mu(t)=y(\cdot,t)\# (f\di \alpha)=[X(\cdot,t-s)\circ y(\cdot,s)]\# (f\di \alpha)=X(\cdot,t-s)\#[y(\cdot,s)\# (f\di \alpha)]=X(\cdot,t-s)\#(\tilde{u}_x^2\di \alpha).
\]

Next, we show \eqref{eq:startfromL22}. For $x=X(\xi,t-s)$, consider $\alpha$ satisfying $\xi=y(\alpha,s)$ and then 
\[
x=X(y(\alpha,s),t-s)=y(\alpha,t).\]
We have
\begin{align*}
\tilde{u}(\xi) +\frac{(t-s)}{2}\tilde{F}(\xi)&= \bar{u}(\bar{x}(\alpha))+\frac{s}{2}(\alpha-\bar{x}(\alpha))+\frac{(t-s)}{2}(\alpha-\bar{x}(\alpha))\\
&=\bar{u}(\bar{x}(\alpha))+\frac{t}{2}(\alpha-\bar{x}(\alpha))=u(x,t).
\end{align*}

\end{proof}
Let us end this section by some remarks. 
\begin{remark}\label{rmk:thm21}	
\begin{enumerate}
\item[(i)] (Relation with regular initial data) For a non-smooth initial datum $(\bar{u},\bar{\mu})\in\mathcal{D}$, let $(u,\mu)$ be a solution to the generalized framework \eqref{eq:gHS1}-\eqref{eq:gHS3} defined by \eqref{eq:measuresolutionu} and \eqref{eq:measuresolutionmu}.
Then $(u,\mu)$ can also be constructed by a regular function $\tilde{u}(x)$ as given in Theorem~\ref{thm:measure} (vi). More precisely, such a regular function $\tilde{u}(x):=u(x,s)$ is obtained by evaluating $u$ at a time $s$ such that $\mu$ is absolutely continuous. This time $s$ always exists due to Theorem~\ref{thm:measure} (iii). Start from $\tilde{u}$ and we can use the traditional flow map $X(\xi,t)$ in the Lagrangian coordinates to construct solution $(v,\nu)$. Then $(u,\mu)$  will be recovered by some time shifting of $(v,\nu)$.

\item[(ii)] (Separation of singular part and absolutely continuous part of $\mu(t)$) In Proposition \ref{pro:keypro} (iv), we used $\bar{x}(\alpha)$ to identify the singular parts (corresponding to $A_0^L=A_0^{L,pp}\cup A_0^{L,sc}$) and absolutely continuous part of $\bar{\mu}$ (corresponding to $B_0^L$). 
Since the singular part of $\bar{\mu}$  comes from $A_0^L$, we can conclude from Theorem \ref{thm:measure} (ii) (or \eqref{eq:singulart}) that this singular part will never create singular parts of $\mu(t)$ again for any other $t$, and all the singular parts of $\mu(t)$ ($t\neq 0$) are generated by the absolutely continuous part of $\bar{\mu}$, i.e. $\bar{u}_x$. We can also see this from the fact that  $y_\alpha(\alpha,t)=\frac{t^2}{4}>0$ for $\alpha\in A_0^L$ (see \eqref{eq:separation}). 
\item[(iii)] (Calculation of mass for singular part of $\mu(t)$) Due to $f(\alpha)=1-\bar{x}'(\alpha)$, we deduce from \eqref{eq:importantrelation} that $\bar{x}'(\alpha) = \frac{t^2}{t^2+4}$ and $f(\alpha)=\frac{4}{t^2+4}$ on $A^{L}_t$. We consider a point $x_0 \in y( A_t^{L,pp},t)$ and $\alpha_1:=z_1(x_0,t)<z_2(x_0,t)=:\alpha_2$. Then, $\bar{u}_x(x)= -\frac{2}{t}$ for all $x\in[x_1,x_2]:=[\bar{x}(\alpha_1),\bar{x}(\alpha_2)]$.
The mass concentrated at $x$ is calculated by
\begin{align*}
\mu(t)(\{x_0\})=\int_{[\alpha_1,\alpha_2]}f(\alpha)\di \alpha = \frac{4}{t^2+4}(\alpha_2-\alpha_1)=\frac{4}{t^2}(x_2-x_1).
\end{align*}
For the singular continuous part,  define $A_t^{E,sc}=\bar{x}(A_t^{L,sc})$ and then $\bar{u}_x(x)=-\frac{2}{t}$ for $x\in A_t^{E,sc}$. We have
\begin{align*}
\mu_{sc}(t)(\mathbb{R})=\frac{4}{t^2}\mathcal{L}(A_t^{E,sc}).
\end{align*}
See Example \ref{ex:singularremark} for a detailed calculation.

\item[(iv)] (Luzin N property)
The absolutely continuous result stated in Theorem \ref{thm:measure} (iv) is more straightforward when $t\notin T_s\cup T_p$. This is because $y(\cdot,t)$ is strictly increasing when $t\notin T_s\cup T_p$, and there exists a unique absolutely continuous inverse of $y(\cdot,t)$ by (ii) in Lemma \ref{lmm:twoclaims}. Using this inverse, we can directly prove that $u(\cdot,t)$ is absolutely continuous on any bounded interval. Moreover, we have
\[
\int_{\mathbb{R}}u_{x}^2(x,t)\di x=\mu_{ac}(t)(\mathbb{R})=\mu(t)(\mathbb{R})=\bar{\mu}(\mathbb{R}),\quad\mbox{for all } t\notin T_s\cup T_p.
\]
However, for $t\in T_s\cup T_p$, the inverse of $y(\cdot,t)$ does not satisfy the Luzin N property on the set $A_t^{L,pp}\cup A_t^{L,sc}$ (see Example~\ref{ex:singularremark} for instance) and it is not absolutely continuous. In this case, the function $u(\cdot,t)$ is still absolutely continuous, and we have
\[
\int_{\mathbb{R}}u_{x}^2(x,t)\di x=\mu_{ac}(t)(\mathbb{R})<\mu(t)(\mathbb{R})=\bar{\mu}(\mathbb{R}),\quad\mbox{for all } t\in T_s\cup T_p.
\]
The above analysis also shows that usually $u_x\notin C(\mathbb{R};L^2(\mathbb{R}))$.

\end{enumerate}

\end{remark}

\section{Existence and uniqueness  of conservative solutions}\label{sec:existenceuniqueness}
In this section, we are going to show that $(u,\mu)$ defined by \eqref{eq:measuresolutionu}-\eqref{eq:measuresolutionmu} is a conservative solution to the system \eqref{eq:gHS1}-\eqref{eq:gHS3}. We will also show the uniqueness of conservative solutions via characteristics method.
\subsection{Existence}
We have the following existence theorem:
\begin{theorem}[Existence]\label{thm:main}
Let $(\bar{u},\bar{\mu})\in \mathcal{D}$ be an initial datum. Let $u$ be defined by \eqref{eq:measuresolutionu} and $\mu$ be defined by \eqref{eq:measuresolutionmu}. Then, $(u(t), \mu(t))$ is a global-in-time conservative solution to the generalized framework \eqref{eq:gHS1}-\eqref{eq:gHS3} in the sense of Definition \ref{def:weak} with initial datum $(\bar{u},\bar{\mu})$. Moreover, the function $u$ and energy measure $\mu$ satisfy all the properties in Theorem \ref{thm:measure}.
\end{theorem}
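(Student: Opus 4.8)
The plan is to observe that nearly all the requirements of Definition \ref{def:weak} are already secured by Theorem \ref{thm:measure}, so the task reduces to verifying the two remaining \emph{weak} identities, the momentum equation \eqref{eq:weakformula} and the energy-conservation identity \eqref{eq:fourth}, by transporting every integral into the Lagrangian variable $\alpha$ and exploiting that $y(\alpha,t)$ in \eqref{eq:xbarbeta} is a quadratic polynomial in $t$. First I would dispatch the structural conditions: the regularity in Definition \ref{def:weak}(i), condition (v), and the statement $\di\mu(t)=u_x^2\,\di x$ for a.e.\ $t$ in condition (ii) are exactly the content of Theorem \ref{thm:measure}(i) and (iv), together with the countability of the singular times in Theorem \ref{thm:measure}(iii) (so that $\mu(t)=\mu_{ac}(t)$ for all but countably many $t$). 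The initial condition $(u(\cdot,0),\mu(0))=(\bar{u},\bar{\mu})$ follows by evaluating \eqref{eq:measuresolutionu}--\eqref{eq:measuresolutionmu} at $t=0$, where $y(\alpha,0)=\bar{x}(\alpha)$ is onto $\mathbb{R}$ by continuity and $\mu(0)=\bar{x}\#(f\,\di\alpha)=\bar{\mu}$ by \eqref{eq:fpush}.

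For the energy identity \eqref{eq:fourth} I would use the defining relation $\partial_t y(\alpha,t)=u(y(\alpha,t),t)$. Writing the integral against $\mu(t)=y(\cdot,t)\#(f\,\di\alpha)$ in the $\alpha$-variable and applying the chain rule gives
\[
\phi_t(y(\alpha,t),t)+u(y(\alpha,t),t)\,\phi_x(y(\alpha,t),t)=\frac{\di}{\di t}\big[\phi(y(\alpha,t),t)\big],
\]
so that, after Fubini,
\[
\int_{\mathbb{R}}\int_{\mathbb{R}}\big(\phi_t+u\phi_x\big)\,\di\mu(t)\,\di t=\int_{\mathbb{R}}f(\alpha)\left(\int_{\mathbb{R}}\frac{\di}{\di t}\big[\phi(y(\alpha,t),t)\big]\,\di t\right)\di\alpha=0,
\]
the inner integral vanishing by the fundamental theorem of calculus since $\phi$ is compactly supported in $t$; here $f\in L^1$ by \eqref{eq:fpush} and the integrand is bounded with compact support in $t$, which legitimizes Fubini.

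The heart of the proof is \eqref{eq:weakformula}. For a.e.\ fixed $t$ (that is, for $t$ outside the countable set $T_p\cup T_s$), $y(\cdot,t)$ is strictly increasing and locally absolutely continuous, so I would change variables $x=y(\alpha,t)$ in all three terms. Abbreviating $U(\alpha,t)=u(y(\alpha,t),t)=\bar{u}(\bar{x}(\alpha))+\tfrac{t}{2}(\alpha-\bar{x}(\alpha))$ and $P(\alpha,t)=\phi(y(\alpha,t),t)$, I would invoke the exact Lagrangian relations $y_t=U$, $\,U_\alpha=u_x(y,t)\,y_\alpha$, $\,U_t=\tfrac12(\alpha-\bar{x}(\alpha))$, and $F(y(\alpha,t),t)=\mu(t)((-\infty,y(\alpha,t)))=\int_{(-\infty,\alpha)}f=\alpha-\bar{x}(\alpha)$ (valid because $y(\cdot,t)$ is strictly increasing for such $t$). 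Substituting $\phi_t(y,t)=P_t-U\phi_x(y,t)$ and $\phi_x(y,t)\,y_\alpha=P_\alpha$, the integrand of $\big[u\phi_t-\phi(uu_x-\tfrac12 F)\big]y_\alpha$ collapses, with $Q:=UP=(u\phi)(y(\alpha,t),t)$, to a pure divergence:
\[
U\phi_t(y,t)\,y_\alpha+P\,U_t\,y_\alpha-P\,U\,U_\alpha=\partial_t\big(Q\,y_\alpha\big)-\partial_\alpha\big(Q\,y_t\big),
\]
where the mixed partials cancel through $y_{t\alpha}=y_{\alpha t}$. Integrating in $t$ and $\alpha$, the first term vanishes because $Q$ is compactly supported in $t$, and the second because $Q$ is compactly supported in $\alpha$ (as $y(\alpha,t)\to\pm\infty$ while $\phi$ has compact support); hence \eqref{eq:weakformula} holds, which by the equivalence with (iii)$'$ noted after Definition \ref{def:weak} also yields \eqref{eq:gHS1} in the $L^2_{loc}$ sense.

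The main obstacle is making these Lagrangian manipulations rigorous for merely Lipschitz/absolutely continuous data. The delicate points are: the validity of the substitution $x=y(\alpha,t)$ and of the area formula for the non-smooth, only monotone map $y(\cdot,t)$; the almost-everywhere commutation $y_{t\alpha}=y_{\alpha t}$, which relies on $\alpha\mapsto\bar{u}(\bar{x}(\alpha))$ being locally absolutely continuous (the composition of the $H^1_{loc}$ function $\bar{u}$ with the Lipschitz map $\bar{x}$) so that the chain and product rules hold a.e.; the point-mass ambiguity of $F(\cdot,t)$ at the countably many times in $T_p$, which is harmless since these form a Lebesgue-null set in the $t$-integration; and the justification of Fubini and of the two fundamental-theorem-of-calculus steps for integrands that are smooth in $t$ but only absolutely continuous in $\alpha$. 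Once these are in place, the divergence structure does all the work.
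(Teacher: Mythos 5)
Your proposal is correct and follows essentially the same route as the paper: both verify conditions (iii) and (iv) of Definition \ref{def:weak} by transporting all integrals to the Lagrangian variable via $\mu(t)=y(\cdot,t)\#(f\di\alpha)$, using the identities $\frac{\di}{\di t}u(y(\alpha,t),t)=\frac{1}{2}(\alpha-\bar{x}(\alpha))$ and, for the (all but countably many) times at which $\mu(t)$ is absolutely continuous, $F(y(\alpha,t),t)=\alpha-\bar{x}(\alpha)$, together with the countability result of Theorem \ref{thm:measure} (iii) to dismiss the exceptional times. The only difference is organizational: the paper first integrates by parts in $t$ in Eulerian coordinates (exploiting $u_t\in L^2_{loc}$ from Theorem \ref{thm:measure}) and then decomposes $u_t$ along characteristics, whereas you recast the Lagrangian integrand as the exact divergence $\partial_t(Qy_\alpha)-\partial_\alpha(Qy_t)$ and kill both terms by compact support — the same cancellations in different packaging.
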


\begin{proof}
	It follows from Theorem~\ref{thm:measure}, Equation \eqref{eq:measuresolutionu} and \eqref{eq:measuresolutionmu} that $(u(t), \mu(t))$ satisfy properties (i), (ii) and (v) of Definition~\ref{def:weak}.
We are going to use the change of variables  $y(\alpha,t)$ to prove (iii) and (iv) (i.e., \eqref{eq:weakformula} and  \eqref{eq:fourth}) of Definition~\ref{def:weak}. Since $u_t$ has enough regularity and $y(\alpha,t)$ is an absolutely continuous function of $\alpha$, we have, for $\phi\in C_c^\infty(\mathbb{R}\times\mathbb{R})$,
\begin{align*}
&\int_{\mathbb{R}}\int_{\mathbb{R}}u\phi_t \di x\di t= -\int_{\mathbb{R}}\int_{\mathbb{R}}u_t\phi \di x\di t  = -\int_{\mathbb{R}}\int_{\mathbb{R}}u_t (y(\alpha,t),t) \phi(y(\alpha,t), t) y_{\alpha}(\alpha,t)\di \alpha\di t \\
=& -\int_{\mathbb{R}}\int_{\mathbb{R}}\frac{\di}{\di t}  u(y(\alpha,t),t) \phi(y(\alpha,t), t) y_{\alpha}(\alpha,t)\di \alpha\di t +  \int_{\mathbb{R}}\int_{\mathbb{R}}(uu_x)(y(\alpha,t), t) \phi(y(\alpha,t), t) y_{\alpha}(\alpha,t)\di \alpha\di t \\
=& -\int_{\mathbb{R}}\int_{\mathbb{R}}\frac{\di }{\di t}  u(y(\alpha,t),t) \phi(y(\alpha,t), t) y_{\alpha}(\alpha,t)\di \alpha\di t  +  \int_{\mathbb{R}}\int_{\mathbb{R}}(uu_x)(x, t) \phi(x, t) \di x\di t.  \\
\end{align*}

Comparing with equation \eqref{eq:weakformula}, we are left to show 
\begin{equation}\label{eq:calF}
\begin{aligned}
\int_{\mathbb{R}}\int_{\mathbb{R}}\frac{\di }{\di t} u(y(\alpha,t),t) \phi(y(\alpha,t), t) y_{\alpha}(\alpha,t)\di \alpha\di t  &= \frac{1}{2}\int_{\mathbb{R}}\int_{\mathbb{R}} \phi(x,t) F(x,t) \di x\di t\\
&= \frac{1}{2}\int_{\mathbb{R}}\int_{\mathbb{R}} \phi(y(\alpha,t),t) F(y(\alpha,t),t) y_{\alpha}(\alpha,t)\di \alpha\di t.
\end{aligned}
\end{equation}

Notice that from the definition of $u$ (see \eqref{eq:measuresolutionu}), we have
$\frac{\di }{\di t}u(y(\alpha,t),t) = \frac{1}{2}(\alpha - \bar{x}(\alpha))$ and the integral on the left hand side of \eqref{eq:calF} becomes
\[
\int_{\mathbb{R}}\int_{\mathbb{R}}\frac{\di }{\di t} u(y(\alpha,t),t) \phi(y(\alpha,t), t) y_{\alpha}(\alpha,t)\di \alpha\di t=\int_{\mathbb{R}}\int_{\mathbb{R}}\frac{1}{2}(\alpha-\bar{x}(\alpha))\phi(y(\alpha,t), t) y_{\alpha}(\alpha,t)\di \alpha\di t.
\]
On the other hand, for all but countably many time $t$, $\mu(t)$ is absolutely continuous. For these time $t$, the absolutely continuity of $\mu(t)$ and \eqref{eq:measuresolutionmu} imply
\begin{align*}
F(y(\alpha,t),t) =  \int_{(-\infty,y(\alpha,t))} \di \mu(t)   
= \int_{(-\infty,\alpha)} f(\alpha)\di \alpha = \alpha -\bar{x}(\alpha).
\end{align*}
This proves \eqref{eq:calF}, and hence, equation \eqref{eq:weakformula} follows.

Next, we prove \eqref{eq:fourth}. Using the definition \eqref{eq:measuresolutionmu} of $\mu(t)$ again, we obtain, for  $\phi\in C_c^\infty(\mathbb{R} \times\mathbb{R})$, 
\begin{equation}\label{eq:conservation of mass}
\begin{aligned}
\int_{\mathbb{R}} \int_{\mathbb{R}} (\phi_t + u\phi_x )\di \mu(t) \di t &= \int_{\mathbb{R}} \int_{\mathbb{R}} \Big[\phi_t (y(\alpha,t), t) + u(y(\alpha,t),t)  \phi_x (y(\alpha,t),t) \Big] f(\alpha)\di \alpha \di t\\
&= \int_{\mathbb{R}} \int_{\mathbb{R}}  \frac{\di }{\di t} \phi(y(\alpha,t), t) f(\alpha)\di \alpha\di t\\
&=\int_{\mathbb{R}} \frac{\di}{\di t} \left( \int_{\mathbb{R}}  \phi(y(\alpha,t), t) f(\alpha)\di \alpha \right)\di t =0.
\end{aligned}
\end{equation}

\end{proof}

\begin{remark}\label{rem: test functions}
	It is obvious to see that \eqref{eq:fourth} still holds for all bounded smooth $\phi(x,t)$ with bounded derivatives supported on $(-\infty, \infty) \times (-M, M)$ for any $M>0$ from the proof of \eqref{eq:conservation of mass}. This property will be used in the proof of Lemma~\ref{lmm:keylemma1}.
\end{remark}

\subsection{Uniqueness}
We will show the uniqueness of conservative solutions via the characteristics method. We use the methods in \cite{bressan2016uniqueness,bressan35uniqueness,bressan2015unique} with some improvements. 
\begin{lemma}\label{lmm:keylemma1}
Let $(v,\nu)$ be a conservative solution to the generalized framework \eqref{eq:gHS1}-\eqref{eq:gHS3} in the sense of Definition \ref{def:weak}. Consider the time $t$ and $\tau$ such that $\nu$ is absolutely continuous. Then, for any fixed $y\in\mathbb{R}$ and $\e_0>0$, 
\begin{align}\label{eq:claim3}
\int_{(-\infty,y+a_-(t-\tau))}v_x^2(x,t)\di x\leq \int_{(-\infty,y)}v_x^2(x,\tau)\di x\leq  \int_{(-\infty,y+a_+(t-\tau))}v_x^2(x,t)\di x,
\end{align}
provided that $t-\tau>0$ is small enough (depending on $v$, $y$ and $\e_0$), where  $a_\pm:=v(y,\tau)\pm\e_0$.  Moreover, for any $T>0$ and any  $-T\leq\tau<t\leq T$,  
\begin{align}\label{eq:claim}
\int_{(-\infty,y-C_T(t-\tau))}v_x^2(x,t)\di x\leq \int_{(-\infty,y)}v_x^2(x,\tau)\di x\leq \int_{(-\infty,y+C_T(t-\tau))}v_x^2(x,t)\di x,
\end{align}
for all $C_T$ satisfying $ \|v\|_{C_{b} (\mathbb{R}\times[-T,T])}\leq C_T$.

\end{lemma}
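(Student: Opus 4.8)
The plan is to work entirely with the \emph{cumulative energy} $F(x,s):=\nu(s)((-\infty,x))$ and to exploit the transport structure encoded in \eqref{eq:gHS2}: formally $F_s+uF_x=0$, so $F$ is constant along characteristics $\dot x=u$. The two inequalities in \eqref{eq:claim3} simply say that $F(y,\tau)$ is trapped between the values of $F(\cdot,t)$ at the two endpoints $y+a_\pm(t-\tau)$, which (for $t-\tau$ small) bracket the position at time $t$ of the characteristic issued from $(y,\tau)$, since its speed $v$ stays within $\e_0$ of $v(y,\tau)$. As no characteristic representation is yet available for an arbitrary conservative solution $(v,\nu)$, I will instead prove the monotonicity of $s\mapsto F(\gamma(s),s)$ along affine boundaries $\gamma$ that travel faster (or slower) than the local velocity, directly from the weak energy identity \eqref{eq:fourth}.

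Concretely, for the right inequality in \eqref{eq:claim3} I would set $\gamma(s)=y+a_+(s-\tau)$ and test \eqref{eq:fourth} against $\phi(x,s)=\chi(s)\,\theta_\delta(\gamma(s)-x)$, where $\theta_\delta$ is a smooth non-decreasing approximation of $\mathbf 1_{(0,\infty)}$ (so that $\theta_\delta(\gamma(s)-x)\to\mathbf 1_{\{x<\gamma(s)\}}$ as $\delta\to0^+$) and $\chi\ge0$ is a temporal cutoff supported near $[\tau,t]$ with a single positive bump of unit mass near $\tau$ and a single negative bump near $t$. Such a $\phi$ is bounded and smooth, with bounded derivatives and compact support in time, hence admissible by Remark~\ref{rem: test functions}. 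A direct computation gives $\phi_s+u\phi_x=\chi'\,\theta_\delta+\chi\,\theta_\delta'\,(\gamma'-u)$, so \eqref{eq:fourth} becomes
\[
\int_{\mathbb R}\int_{\mathbb R}\chi'(s)\,\theta_\delta(\gamma(s)-x)\,\di\nu(s)\,\di s=-\int_{\mathbb R}\int_{\mathbb R}\chi(s)\,\theta_\delta'(\gamma(s)-x)\,(\gamma'(s)-u(x,s))\,\di\nu(s)\,\di s.
\]

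The decisive point is the sign of the right-hand side. The factor $\theta_\delta'(\gamma(s)-x)$ is concentrated in the layer $x\in[\gamma(s)-\delta,\gamma(s)]$, and there $x$ lies within $O(\delta)+O(t-\tau)$ of $y$ while $s$ lies within $t-\tau$ of $\tau$; by the joint continuity of $v$ (Definition~\ref{def:weak}(i)) I can choose $t-\tau$ and then $\delta$ small enough that $u(x,s)=v(x,s)\le v(y,\tau)+\e_0=a_+=\gamma'(s)$ throughout this layer. Since $\theta_\delta'\ge0$, $\chi\ge0$ and $\nu\ge0$, the right-hand side is $\le0$, whence $\int_{\mathbb R}\int_{\mathbb R}\chi'\theta_\delta\,\di\nu\,\di s\le0$. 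Letting $\delta\to0^+$ turns $\int\theta_\delta(\gamma(s)-\cdot)\,\di\nu(s)$ into $F(\gamma(s),s)$, and then letting the two bumps of $\chi'$ shrink to $\tau$ and $t$ --- legitimate because $\nu\in C(\mathbb R;\mathcal M_+(\mathbb R))$ and because $\tau,t$ are a.c. times, so $\nu$ carries no atom there --- yields $F(\gamma(\tau),\tau)-F(\gamma(t),t)\le0$, i.e. $F(y,\tau)\le F(y+a_+(t-\tau),t)$. Reading $F$ at the a.c. times $\tau$ and $t$ as $\int v_x^2$ gives exactly the right half of \eqref{eq:claim3}; the left half is identical with $\gamma(s)=y+a_-(s-\tau)$, where now $\gamma'=a_-\le v(\gamma(s),s)=u$ forces the opposite sign and hence the reverse monotonicity.

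Finally, \eqref{eq:claim} follows from the same computation with $\gamma(s)=y\pm C_T(s-\tau)$: since $C_T\ge\|v\|_{C_{b}(\mathbb R\times[-T,T])}\ge|u(x,s)|$ on the whole strip, the inequalities $\gamma'=C_T\ge u$ and $-C_T\le u$ hold \emph{globally} on $[-T,T]$, so no smallness of $t-\tau$ is needed and the argument runs verbatim for all $-T\le\tau<t\le T$. The main obstacle is the layer estimate in the middle step: one must pass to the limit $\delta\to0$ while keeping $u$ on the thin layer below $\gamma'$, and simultaneously justify that the intermediate-time atoms of $\nu$ (a countable, hence $s$-null, set) do not spoil the identification of $\int\theta_\delta\,\di\nu$ with $F(\gamma(s),s)$ in the limit; it is the compatibility of the two limits $\delta\to0$ and $\chi'\to\delta_\tau-\delta_t$ that requires the care.
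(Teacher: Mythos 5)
Your proposal is correct and is essentially the paper's own proof: the paper tests \eqref{eq:fourth} with $\phi^{\pm}_{\e}(x,s)=\theta_{\e}(x-a_\pm(s-\tau))\chi_\e(s)$, which is exactly your $\chi(s)\,\theta_\delta(\gamma(s)-x)$ in different notation, derives the same identity $\phi_s+v\phi_x=\chi'\theta+\chi\theta'(a_\pm-v)$, kills the layer term by the same continuity-of-$v$ sign argument (and by $|v|\le C_T$ globally for \eqref{eq:claim}, with no smallness of $t-\tau$), and then passes to the limit using the weak continuity of $\nu$ together with the absolute continuity of $\nu$ at the times $\tau$ and $t$. The only cosmetic difference is that you send $\delta\to0$ before shrinking the temporal bumps, whereas the paper couples the spatial layer width and the temporal bump width into a single parameter $\e$ and handles the time limit via semicontinuity of weakly convergent measures on open and closed half-lines.
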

\begin{proof}
We are going to construct some test functions and use \eqref{eq:fourth} to prove this lemma.

For any fixed $\e>0$ sufficiently small, consider the following two non-negative smooth  functions: 
\begin{equation}\label{eq:theta}
\theta_{\e}(x) =\left\{
\begin{aligned}
&1,\quad x\leq y,\\ 
&0,\quad x\geq y+\e,
\end{aligned}
\right.
\end{equation}
and
$\chi_\e(s)=1$ for $s\in[\tau,t]$ and $\su\chi_\e\subset(\tau-\e,t+\e)$. Assume that 
\[
\theta_\e'\leq 0,\quad \chi'_{\epsilon}(s)>0~\textrm{ for }~s\in(\tau-\e,\tau),\quad \chi'_{\epsilon}(s)<0~\textrm{ for }~s\in(t,t+\e).
\]
Let $\phi^{\pm}_{\e}(x,s)=\theta_{\e}(x-a_\pm(s-\tau))\chi_\e(s)\in C^{\infty}_b(\mathbb{R}\times [0, \infty))$. 
Then,
\begin{align*}
(\phi^{\pm}_{\e t}+v\phi^{\pm}_{\e x})(x,s)=\chi'_\e(s)\theta_\e(x-a_\pm(s-\tau) )+(v-a_\pm)\chi_\e(s)\theta'_\e(x-a_\pm(s-\tau)).
\end{align*}
Since the conservative solution $v$ satisfies \eqref{eq:fourth} and $\phi^{\pm}_{\e}$ is supported on $(-\infty, \infty) \times [\tau-\e,t+\e]$, we can take $\phi^{\pm}_{\e}$ as a test function in  \eqref{eq:fourth} (cf. Remark \ref{rem: test functions}),  and obtain 
\begin{multline}\label{eq:regularity1}
0=\left(\int_{\tau-\e}^{\tau}+\int_{t}^{t+\e}\right)\int_{-\infty}^{y+a_\pm(s-\tau)+\e}\chi'_\e(s)\theta_\e(x-a_\pm(s-\tau))v_x^2(x,s)\di x\di s\\
+\int_{\tau-\e}^{t+\e}\int_{y+a_\pm(s-\tau)}^{y+a_\pm(s-\tau)+\e}(v{ (x,s)}-a_\pm)\chi_\e(s)\theta'_\e(x-a_\pm(s-\tau))v_x^2(x,s)\di x\di s.
\end{multline}

Consider the case for $a_+$.
When $t-\tau$ and $\e$ are sufficiently small, by the continuity of $v$, we have $|v(x,s)-v(y,\tau)|<\e_0$ for $s\in(\tau-\e,t+\e)$ and $x\in (y+a_+(s-\tau), y+a_+(s-\tau)+\e)$, which implies  $v(x,s)-a_+=v(x,s)-v(y,\tau)-\e_0\leq 0$. Hence, the second term in \eqref{eq:regularity1} is positive due to $\theta'_\e\leq0$ and $\chi_\e\geq0$. Therefore,
\begin{align}\label{eq:regu}
\left(\int_{\tau-\e}^{\tau}+\int_{t}^{t+\e}\right)\int_{-\infty}^{y+a_+(s-\tau)+\e}\chi'_\e(s)\theta_\e(x-a_+(s-\tau))v_x^2(x,s)\di x\di s\leq0.
\end{align}
By the definition of $\theta_{\e}$ and $\chi_\e$,  we have
\begin{equation*}
\begin{aligned}
\int_{\tau-\e}^{\tau}\int_{-\infty}^{y+a_+(s-\tau)+\e}\chi'_\e(s)\theta_\e(x-a_+(s-\tau))v_x^2(x,s)\di x\di s
 \ge&\int_{\tau-\e}^{\tau} \chi'_\e(s) \int_{-\infty}^{y+a_+(s-\tau)}  v_x^2(x,s) \di x \di s\\
\geq& \inf_{s\in[\tau - \e, \tau]^0}\int_{-\infty}^{y+a_+(s-\tau)}  v_x^2(x,s) \di x,
\end{aligned}
\end{equation*}
and similarly,
\[
\int_{t}^{t+\e}\int_{-\infty}^{y+a_+(s-\tau)+\e}\chi'_\e(s)\theta_\e(x-a_+(s-\tau))v_x^2(x,s)\di x\di s\geq -\sup_{s\in[t,t+\e]^0}\int_{-\infty}^{y+a_+(s-\tau)+\e} v_x^2(x,s)\di x.
\]
Here, we use $[\tau-\e,\tau]^0$ and $[t,t+\e]^0$ to stand for those times in  $[\tau-\e,\tau]$ and $[t,t+\e]$  such that $\nu$ is absolutely continuous.
As a measure, $\nu$ is continuous in time, so using  \cite[Theorem 1.40]{evans2015measure}, we have
\[
\lim_{\e\to0^+}\int_{\tau-\e}^{\tau}\int_{-\infty}^{y+a_+(s-\tau)+\e}\chi'_\e(s)\theta_\e(x-a_+(s-\tau))v_x^2(x,s)\di x\di s\geq \int_{-\infty}^{y}v_x^2(x,\tau)\di x,
\]
and
\[
\lim_{\e\to0^+}\int_{t}^{t+\e}\int_{-\infty}^{y+a_+(s-\tau)+\e}\chi'_\e(s)\theta_\e(x-a_+(s-\tau))v_x^2(x,s)\di x\di s\geq -\int_{-\infty}^{y+a_+(t-\tau)}v_x^2(x,t)\di x.
\]
Passing to the limit as $\e\to 0^+$ in \eqref{eq:regu} and using the above two inequalities, we obtain the second inequality in \eqref{eq:claim3}.

Consider the case for $a_-$. When $t-\tau$ and $\e$ are sufficiently small, by the continuity of $v$, we have $|v(x,s)-v(y,\tau)|<\e_0$ for $s\in(\tau-\e,t+\e)$ and $x\in (y+a_-(s-\tau), y+a_-(s-\tau)+\e)$, which implies  $v(x,s)-a_-=v(x,s)-v(y,\tau)+\e_0\geq 0$. Hence, the second term in \eqref{eq:regularity1} is negative due to $\theta'_\e\leq0$ and $\chi_\e\geq0$. Therefore,
\begin{align}\label{eq:regu1}
\left(\int_{\tau-\e}^{\tau}+\int_{t}^{t+\e}\right)\int_{-\infty}^{y+a_-(s-\tau)+\e}\chi'_\e(s)\theta_\e(x-a_-(s-\tau))v_x^2(x,s)\di x\di s\geq0.
\end{align}
Similarly to the case for $a_+$, passing to the limit as $\e\to 0^+$ in \eqref{eq:regu1}, we obtain the first inequality in \eqref{eq:claim3}.

One can show \eqref{eq:claim} by applying a similar argument as in the above proof of \eqref{eq:claim3} by using $a_-=-C_T$ and $a_+=C_T$ instead. It is worth noting that $-C_T\leq v\leq C_T$, so the proof will not need the smallness condition for $t-\tau$ because we do not need to apply the continuity of $v$ as in the above proof of \eqref{eq:claim3}.

\end{proof}

Next, we apply the above lemma to prove the following uniqueness theorem. 
\begin{theorem}[Uniqueness of characteristics and conservative solutions]\label{thm:main1}
Let    $(v,\nu)$ be a conservative solution to the generalized framework \eqref{eq:gHS1}-\eqref{eq:gHS3} in the sense of Definition \ref{def:weak} with initial datum $(\bar{u},\bar{\mu}) \in \mathcal{D}$. Then, there exists a unique characteristic $y_1(\alpha,t)$ satisfying
\begin{align}\label{eq:flowmap}
\frac{\partial}{\partial t}y_1(\alpha,t)=v(y_1(\alpha,t),t),\quad y_1(\alpha,0)=\bar{x}(\alpha),
\end{align}
 and
\begin{equation}\label{eq:energyconserved}
\nu(t)((-\infty, y_1(\alpha,t)))\leq \alpha-\bar{x}(\alpha)\leq \nu(t)((-\infty, y_1(\alpha,t)]),
\end{equation}
for any $\alpha \in\mathbb{R}$ and a.e $t\in \mathbb{R}$, where $\bar{x}(\alpha)$ is defined by \eqref{eq:barx1}. 
The uniqueness of characteristics and conservative solutions follows, i.e.,   $(v,\nu) = (u,\mu)$, where $(u,\mu)$ is  defined by \eqref{eq:measuresolutionu} and $\eqref{eq:measuresolutionmu}$. 
\end{theorem}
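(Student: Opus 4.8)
The plan is to show that \emph{every} solution of the characteristic ODE \eqref{eq:flowmap} is automatically forced to satisfy the energy-balance relation \eqref{eq:energyconserved}, and that these two conditions together pin the trajectory down to the explicit closed form \eqref{eq:xbarbeta}; this single thread yields existence, uniqueness, and the identity $(v,\nu)=(u,\mu)$ at once. For existence, note that by Definition \ref{def:weak} (i) the velocity $v\in C(\mathbb{R};C_b(\mathbb{R}))$ is bounded and continuous, so Peano's theorem provides, for each fixed $\alpha$, at least one solution $y_1(\alpha,\cdot)$ of \eqref{eq:flowmap} with $y_1(\alpha,0)=\bar{x}(\alpha)$. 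Since $v$ is only $C^{1/2}_{loc}$, Picard--Lindel\"of is unavailable and uniqueness cannot come from the ODE alone; it is the energy structure, via Lemma \ref{lmm:keylemma1}, that will force uniqueness.

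The central step is to establish \eqref{eq:energyconserved}. Fix $\alpha$ and such a characteristic $y_1$, and take two absolutely continuous times $\tau<t$. Partition $[\tau,t]$ finely, and on each subinterval apply \eqref{eq:claim3} of Lemma \ref{lmm:keylemma1} with base point $y=y_1(\alpha,\tau_j)$ and speeds $a_{\pm}=v(y_1(\alpha,\tau_j),\tau_j)\pm\e_0$. Because the characteristic moves with speed $v(y_1(\alpha,s),s)$, which by continuity of $v$ along the trajectory stays within $[a_-,a_+]$ on a short subinterval, its endpoint $y_1(\alpha,\tau_{j+1})$ lies between $y_1(\alpha,\tau_j)+a_-\Delta\tau_j$ and $y_1(\alpha,\tau_j)+a_+\Delta\tau_j$; monotonicity of the spatial cumulative energy then squeezes $\int_{(-\infty,y_1(\alpha,\tau_{j+1}))}v_x^2(\cdot,\tau_{j+1})\di x$ between the corresponding left-cumulative energies at time $\tau_j$, up to an error of order $\e_0\,\Delta\tau_j$. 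Chaining over the partition and letting $\e_0\to0^+$ shows that $s\mapsto\nu(s)((-\infty,y_1(\alpha,s)))$ is constant over absolutely continuous times, hence equals its $\tau=0$ value $\alpha-\bar{x}(\alpha)$; recall from \eqref{eq:barx1} that $\alpha-\bar{x}(\alpha)$ is precisely the cumulative mass of $\bar{\mu}$ to the left of $\bar{x}(\alpha)$. Continuity of $\nu$ in $\mathcal{M}_+(\mathbb{R})$ then upgrades this to the two-sided bracketing \eqref{eq:energyconserved} for all $t$.

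With \eqref{eq:energyconserved} in hand, I would invoke condition (iii)$'$ of Definition \ref{def:weak}, namely $v_t+vv_x=\tfrac12 F$ in $L^2_{loc}$ with $F(\cdot,t)=\nu(t)((-\infty,\cdot))$. Along the flow, $t\mapsto v(y_1(\alpha,t),t)$ is absolutely continuous with $\frac{\di}{\di t}v(y_1(\alpha,t),t)=(v_t+vv_x)(y_1(\alpha,t),t)=\tfrac12 F(y_1(\alpha,t),t)$ for a.e.\ $t$, and by \eqref{eq:energyconserved} the right-hand side equals the constant $\tfrac12(\alpha-\bar{x}(\alpha))$. Integrating twice, using $v(\bar{x}(\alpha),0)=\bar{u}(\bar{x}(\alpha))$ and $y_1(\alpha,0)=\bar{x}(\alpha)$, recovers exactly \eqref{eq:xbarbeta}, so $y_1=y$; in particular the characteristic obeying \eqref{eq:flowmap}--\eqref{eq:energyconserved} is unique. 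Then $v(y(\alpha,t),t)=\bar{u}(\bar{x}(\alpha))+\tfrac{t}{2}(\alpha-\bar{x}(\alpha))=u(y(\alpha,t),t)$ by \eqref{eq:measuresolutionu}, and since $y(\cdot,t)$ is continuous, increasing and onto $\mathbb{R}$ while $v,u$ are continuous, we conclude $v\equiv u$. Finally, \eqref{eq:energyconserved} identifies $\nu(t)$ as the pushforward $y(\cdot,t)\#(f\di\alpha)=\mu(t)$, whence $\nu=\mu$.

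The main obstacle will be the second step: converting the one-step bracketing of Lemma \ref{lmm:keylemma1} into genuine conservation of the left-cumulative energy along a characteristic, controlling the $\e_0\to0^+$ limit and the fact that the bracketing is only available at absolutely continuous times, while simultaneously justifying rigorously---given only $v\in C^{1/2}_{loc}(\mathbb{R}\times\mathbb{R})$ with $v_t\in L^2_{loc}$---that $t\mapsto v(y_1(\alpha,t),t)$ is absolutely continuous with derivative $v_t+vv_x$ along the flow.
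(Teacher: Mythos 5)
Your final step (integrating $\tfrac{\di}{\di t}v(y_1(\alpha,t),t)=\tfrac12 F=\tfrac12(\alpha-\bar{x}(\alpha))$ twice to recover \eqref{eq:xbarbeta} and conclude $v\equiv u$, $\nu\equiv\mu$) is exactly the paper's Step 3 and is fine. The fatal problem is upstream: your opening claim, that \emph{every} solution of the ODE \eqref{eq:flowmap} is automatically forced to satisfy \eqref{eq:energyconserved}, is false, so Peano existence plus the chaining argument cannot produce the required characteristic. Concretely, take $\bar{u}\equiv 0$ and $\bar{\mu}=\delta_0$, which lies in $\mathcal{D}$. The conservative solution is the rarefaction fan $v(x,t)=2x/t$ for $0\le x\le t^2/4$ (with $v=0$ to the left and $v=t/2$ to the right), and $\bar{x}(\alpha)=0$ for every $\alpha\in[0,1]$. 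The ODE $\dot{y}=v(y,t)$, $y(0)=0$, has the continuum of solutions $y_c(t)=ct^2/4$, $c\in[0,1]$, and the left-cumulative energy along $y_c$ equals $c$ for all $t\neq 0$; for a fixed $\alpha\in[0,1]$ only the single choice $c=\alpha$ satisfies \eqref{eq:energyconserved}. Thus the energy relation is a \emph{selection principle} among non-unique ODE trajectories, not a consequence of the ODE, and this is precisely the situation the theorem must handle (atoms of $\bar{\mu}$ at $t=0$, and focusing times such as $t=2$ in the introduction's example, where a trajectory can switch from one extreme characteristic to another while still solving the ODE).

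Correspondingly, the chaining argument cannot close. Lemma \ref{lmm:keylemma1} only applies when both $\tau$ and $t$ are absolutely continuous times, and the per-step error is not $O(\e_0\,\Delta\tau_j)$: it is the $\nu(\tau_{j+1})$-measure of an interval of length $2\e_0\Delta\tau_j$, which near a non-AC time (when $\nu$ is close to an atom sitting on the trajectory) is $O(1)$ no matter how small $\e_0$ and $\Delta\tau_j$ are. That is exactly why spurious trajectories, whose left-cumulative energy genuinely jumps across a non-AC time, are not excluded by the bracketing; and at $t=0$ itself, when $\bar{\mu}$ has an atom at $\bar{x}(\alpha)$, the ``initial value'' is only constrained to the fat interval $\bigl[\bar{\mu}((-\infty,\bar{x}(\alpha))),\,\bar{\mu}((-\infty,\bar{x}(\alpha)])\bigr]$, so the constant can never be pinned to $\alpha-\bar{x}(\alpha)$. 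The paper resolves this by building the energy relation in from the start rather than deriving it: define $x_1(\beta,t)$ by \eqref{eq:x1betat} as the inverse of $x\mapsto x+\nu(t)((-\infty,x])$, prove $t\mapsto x_1(\beta,t)$ is Lipschitz (via \eqref{eq:claim}) and continuous at the exceptional times, solve the integral equation \eqref{eq:ode1}, $\beta_1(t)=\alpha+\int_0^t v(x_1(\beta_1(s),s),s)\di s$, by Banach's fixed point theorem (the map $\beta\mapsto v(x_1(\beta,t),t)$ is uniformly $\tfrac12$-Lipschitz by \eqref{eq:vxbeta}), and set $y_1(\alpha,t):=x_1(\beta_1(t),t)$, which satisfies \eqref{eq:energyconserved} by construction; only \emph{then} is the ODE \eqref{eq:flowmap} verified, by a pointwise contradiction argument at a.e.\ differentiability time using \eqref{eq:claim3}. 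Some such selection mechanism in the flattened coordinate is the missing idea; the ODE-first route you propose cannot work.
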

\begin{proof}
We will separate the proof into three steps.

\textbf{Step 1.} We use a similar definition for $x_1(\beta,t)$  to that of $\bar{x}(\alpha)$: for any $\beta\in\mathbb{R}$ and $t\in \mathbb{R}$,
\begin{align}\label{eq:x1betat}
x_1(\beta,t)+\nu(t)((-\infty, x_1(\beta,t)))\leq \beta\leq x_1(\beta,t)+\nu(t)((-\infty, x_1(\beta,t)]),
\end{align}
which is actually the inverse function of the sum of identity and cumulative energy distribution: $x+\nu(t)((-\infty,x])$. For a.e. $t\in \mathbb{R}$, we have $\di \nu(t)=v_x^2(x,t)\di x$, so at these times, Definition~\eqref{eq:x1betat} is equivalent to
\begin{align}\label{eq:newvariables}
x_1(\beta,t)+\int_{(-\infty, x_1(\beta,t))}v_x^2(y,t)\di y=\beta, 
\end{align}
which implies
\begin{align}\label{eq:xbeta}
|x_{1\beta}(\beta,t)|=\frac{1}{1+v_x^2(x_1(\beta,t),t)}\leq 1,\quad\forall \beta\in\mathbb{R}, \textrm{ a.e. }t\in \mathbb{R}
\end{align}
and
\begin{align}\label{eq:vxbeta}
|\partial_\beta v(x_1(\beta,t),t)|=\left|\frac{v_x(x_1(\beta,t),t)}{1+v_x^2(x_1(\beta,t),t)}\right|\leq \frac{1}{2},\quad\forall \beta\in\mathbb{R}, \textrm{ a.e. }t\in \mathbb{R}.
\end{align}

Next, we prove $t\mapsto x_1(\beta,t)$ is Lipschitz. First, consider $\tau, t \in[-T,T]$ and $\tau<t $ such that $\nu$ is absolutely continuous at $\tau, t$.  Let $y=x_1(\beta,\tau)$. 
It follows from the first inequality of \eqref{eq:claim} that
\begin{align*}
y-C_T(t-\tau)+\int_{(-\infty,y-C_T(t-\tau))} v_x^2(x,t)\di x &\leq x_1(\beta,\tau)+\int_{(-\infty,x_1(\beta,\tau))}v_x^2(x,\tau)\di x=\beta\\
&=x_1(\beta,t)+\int_{(-\infty,x_1(\beta,t))}v_x^2(x,t)\di x,
\end{align*}
which implies $y-C_T(t-\tau)\leq x_1(\beta,t)$, i.e. $x_1(\beta,\tau)-x_1(\beta,t)\leq C_T(t-\tau)$. Similarly, it follows from the second inequality of \eqref{eq:claim} that
\begin{align*}
y+C_T(t-\tau)+\int_{(-\infty,y+C_T(t-\tau))}v_x^2(x,t)\di x&\geq x_1(\beta,\tau)+\int_{(-\infty,x_1(\beta,\tau))}v_x^2(x,\tau)\di x=\beta\\
&=x_1(\beta,t)+\int_{(-\infty,x_1(\beta,t))}v_x^2(x,t)\di x,
\end{align*}
which implies $y+C_T(t-\tau)\geq x_1(\beta,t)$, i.e. $x_1(\beta,t)-x_1(\beta,\tau)\leq C_T(t-\tau)$. Combining the above two results yields
\[
|x_1(\beta,t)-x_1(\beta,\tau)|\leq C_T|t-\tau|,
\]
for any $t$, $\tau$ such that $\nu$ is absolutely continuous. With the above results, when $\nu$ is not absolutely continuous at $t$ and/or $\tau$, we only need to show the continuity of map $t\mapsto x_1(\beta,t)$. Actually, it follows from Definition~\eqref{eq:x1betat} that
\begin{align}
x_1(\beta,t)+\nu(t)((-\infty, x_1(\beta,t)))\leq \beta\leq x_1(\beta,s)+\nu(s)((-\infty, x_1(\beta,s)]),
\end{align}
which implies
\[
 x_1(\beta,s)- x_1(\beta,t)\geq \nu(t)((-\infty, x_1(\beta,t)))-\nu(s)((-\infty, x_1(\beta,s)])
\]
for any $s$, $t\in \mathbb{R}$. Fix a $t \in \mathbb{R}$. We prove the continuity of $x_1(\beta,\cdot)$ at $t$ by a contradiction argument. Seeking for a contradiction, we assume that there exists a sequence $s_k\to t$ as $k\to\infty$ but $\lim_{k\to\infty}x_1(\beta,s_k)=A<x_1(\beta,t)$. Due to Remark \ref{rem: mu(t)(mathbb R) is conserved}, we have 
\begin{align*}
0>&\lim_{k\to \infty}x_1(\beta,s_k)- x_1(\beta,t)\geq  \nu(t)((-\infty, x_1(\beta,t)))-\limsup_{k\to \infty}\nu(s_k)((-\infty, x_1(\beta,s_k)])\\
=&  \nu(t)((-\infty, x_1(\beta,t)))-\limsup_{k\to \infty}[\nu(s_k)(\mathbb{R})-\nu(s_k)(( x_1(\beta,s_k),+\infty))]\\
=&  \nu(t)((-\infty, x_1(\beta,t)))-\bar{\mu}(\mathbb{R})+\liminf_{k\to \infty}\nu(s_k)(( x_1(\beta,s_k),+\infty)).
\end{align*}
For $k$ big enough, we have $x_1(\beta,s_k)<\frac{x_1(\beta,t)+A}{2}<x_1(\beta,t)$, and combining \cite[Theorem 1.40]{evans2015measure} yields a contradiction:
\begin{align*}
0>&\nu(t)((-\infty, x_1(\beta,t)))-\bar{\mu}(\mathbb{R})+\liminf_{k\to \infty}\nu(s_k)\left(\left(\frac{x_1(\beta,t)+A}{2},+\infty\right)\right)\\
\geq&\nu(t)((-\infty, x_1(\beta,t)))-\bar{\mu}(\mathbb{R})+ \nu(t)\left(\left( \frac{x_1(\beta,t)+A}{2},+\infty\right)\right)\geq 0,
\end{align*}
which is a contradiction.
On the other hand, if there is a sequence $\tilde{s}_k\to t$ as $k\to\infty$ but $\lim_{k\to\infty}x_1(\beta,\tilde{s}_k)=B>x_1(\beta,t)$, we can also obtain a contradiction by a similar argument. Combining the above arguments, we know $x_1(\beta,t)$ is continuous with respect to time $t$.

\textbf{Step 2.}  In this step, we are going to prove \eqref{eq:flowmap} and \eqref{eq:energyconserved}.

Consider the integral equation:
\begin{equation}\label{eq:ode1}
\beta_1(t)=\alpha+\int_0^tv(x_1(\beta_1(s),s),s)\di s.
\end{equation}
For $t\in \mathbb{R}$, due to \eqref{eq:vxbeta} and Banach fixed point theorem, there always exists a unique global solution $\beta_1(t)$ to \eqref{eq:ode1}. 
Define
\begin{align}\label{eq:charac}
y_1(\alpha,t)=x_1(\beta_1(t),t),\quad \alpha\in\mathbb{R},~~ t\in \mathbb{R}.
\end{align}
Then, combining \eqref{eq:barx1} and \eqref{eq:x1betat} yields the initial datum: $y_1(\alpha,0)=\bar{x}(\alpha)$. 
From  \eqref{eq:x1betat}, \eqref{eq:ode1}, and \eqref{eq:charac}, the following relation holds: for a.e. $t\in \mathbb{R}$,
\begin{align}\label{eq:combine}
y_1(\alpha,t)+\nu(t)((-\infty, y_1(\alpha,t)))\leq \alpha+\int_0^tv(y_1(\alpha,s),s)\di s\leq y_1(\alpha,t)+\nu(t)((-\infty, y_1(\alpha,t)]).
\end{align}
Fix $\alpha\in \mathbb{R}$. From Step 1, we know that $y_1(\alpha,t)$ is differentiable for a.e. $t>0$.
In the following, we only consider those  $t\in \mathbb{R}$ such that $y_1(\alpha,\cdot)$ is differentiable and $\nu(t)$  is absolutely continuous with $\di \nu(t)=v_x^2(x,t)\di x$. 
We will prove \eqref{eq:flowmap} by a contradiction argument. Seeking for a contradiction, we assume that there exists a $\tau\in\mathbb{R}$ such that $\di \nu(\tau)=v_x^2(x,\tau)\di x$ and $y_1(\alpha,\tau)$ is differentiable but $\partial_ty_1(\alpha,\tau)\neq v(y_1(\alpha,\tau),\tau)$. Then, there exists some $\e_0>0$ such that either one of the following two cases holds:
\begin{enumerate}
\item [\textbf{Case 1:}]
\begin{align}\label{eq:contradiction}
\partial_ty_1(\alpha,\tau)\leq v(y_1(\alpha,\tau),\tau)-2\e_0.
\end{align}
\item  [\textbf{Case 2:}]
\begin{align}\label{eq:contradiction2}
\partial_ty_1(\alpha,\tau)\geq v(y_1(\alpha,\tau),\tau)+2\e_0.
\end{align}
\end{enumerate}
Next, we derive a contradiction from Case 1. Let $y=y_1(\alpha,\tau)$ and $a_-=v(y,\tau)-\e_0$. Then, the first inequality in \eqref{eq:claim3} holds for $t-\tau>0$ small enough. Due to \eqref{eq:contradiction}, we have
\[
y_1(\alpha,t)<y_1(\alpha,\tau)+[v(y_1(\alpha,\tau),\tau)-\e_0](t-\tau)=y+a_-(t-\tau)
\]
for $t-\tau$ small enough. Combining the first inequality in \eqref{eq:claim3} yields
\begin{equation}\label{eq:two}
\begin{aligned}
\beta_1(t)&=x_1(\beta_1(t),t)+\int_{(-\infty,x_1(\beta_1(t),t))}v_x^2(x,t)\di x=y_1(\alpha,t)+\int_{(-\infty,y_1(\alpha,t))}v_x^2(x,t)\di x\\
&<y+a_-(t-\tau)+\int_{(-\infty,y+a_-(t-\tau))}v_x^2(x,t)\di x\\
&\leq y_1(\alpha,\tau)+[v(y_1(\alpha,\tau),\tau)-\e_0](t-\tau)+\int_{(-\infty,y_1(\alpha,\tau))}v_x^2(x,\tau)\di x\\
&=\beta_1(\tau)+[v(y_1(\alpha,\tau),\tau)-\e_0](t-\tau).
\end{aligned}
\end{equation}
Therefore, 
\[
\e_0<v(y_1(\alpha,\tau),\tau)-\frac{\beta_1(t)-\beta_1(\tau)}{t-\tau}=v(x_1(\beta_1(\tau),\tau),\tau)-\frac{\beta_1(t)-\beta_1(\tau)}{t-\tau}.
\]
Passing to the limit as $t\to \tau$ in the above inequality, and using \eqref{eq:ode1}, we obtain $\e_0\leq 0$, which is a contradiction.

For Case 2, we can use the second inequality in \eqref{eq:claim3} to derive a contradiction in a similar manner.

From \eqref{eq:flowmap}, we have
$
y_1(\alpha,t)=\bar{x}(\alpha)+\int_0^tv(y_1(\alpha,s),s)\di s$, and hence, \eqref{eq:energyconserved} follows directly from \eqref{eq:combine}.

\textbf{Step 3.} Uniqueness. Notice that $v$ satisfies the Hunter-Saxton equation \eqref{eq:HS} in $L_{loc}^2(\mathbb{R}\times\mathbb{R})$, and hence, $v$ satisfies \eqref{eq:HS} classically almost everywhere in $\mathbb{R}\times\mathbb{R}$. Since we have already proved that there exists a characteristics function $y_1(\alpha,t)$ satisfying \eqref{eq:flowmap} and \eqref{eq:energyconserved}, we also have
\begin{align*}
\frac{\partial^2}{\partial t^2}y_1(\alpha,t)=\frac{\di}{\di t}v(y_1(\alpha,t),t)=(v_t+vv_x)(y_1(\alpha,t),t)=\frac{1}{2}\int_{(-\infty, y_1(\alpha,t))}v_x^2(x,t)\di x=\frac{1}{2}\left(\alpha-\bar{x}(\alpha) \right).
\end{align*}
Since
\[
\frac{\partial}{\partial t}y_1(\alpha,0)=\bar{u}(\bar{x}(\alpha)),
\]
we have
\[
y_1(\alpha,t)=\bar{x}(\alpha)+\bar{u}(\bar{x}(\alpha))t+\frac{t^2}{4}(\alpha-\bar{x}(\alpha))=y(\alpha,t),
\]
where $y(\alpha,t)$ is defined by \eqref{eq:xbarbeta}. By Definition~\eqref{eq:measuresolutionu} of $u$, we have
\begin{align}
v(y(\alpha,t),t)=\frac{\partial}{\partial t}y(\alpha,t)=\bar{u}(\bar{x}(\alpha))+\frac{t}{2}(\alpha-\bar{x}(\alpha))=u(y(\alpha,t),t).
\end{align}

\end{proof}

\noindent\textbf{Acknowledgements} Y. Gao is supported by the Start-up fund from The Hong Kong Polytechnic University. T. K. Wong is partially supported by the HKU Seed Fund for Basic Research under the project code 201702159009, the Start-up Allowance for Croucher Award Recipients, and Hong Kong General Research Fund (GRF) grant ``Solving Generic Mean Field Type Problems: Interplay between Partial Differential Equations and Stochastic Analysis'' with project number 17306420.

\appendix

\section{Some useful facts from real analysis}\label{app:real}
In this appendix, we state and prove three useful lemmas from real analysis. All of them are fundamental and somewhat classical. For readers' convenience, we also provide their proofs here.

%All of them are fundamental, but we failed to find proper references. For convenience,  we present the proofs here.
\begin{lemma}\label{lmm:twoclaims}
The following two statements holds:
\begin{enumerate}
\item[(i)] The real line $\mathbb{R}$ cannot be written as the union of uncountably many disjoint subsets with positive measures. 
\item[(ii)] Let $X:\mathbb{R}\to\mathbb{R}$ be an absolutely continuous function satisfying $X_\xi(\xi)>0$ for a.e. $\xi\in\mathbb{R}$. Then, there exists a unique absolutely continuous inverse of $X$.
\end{enumerate}
\end{lemma}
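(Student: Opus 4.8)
The plan is to treat the two parts separately, since they are logically independent.

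For part (i), I would use an exhaustion-and-pigeonhole argument, noting first that the hypothesis ``$\bigcup_i A_i=\mathbb{R}$'' is irrelevant: it suffices to show that any pairwise disjoint family $\{A_i\}_{i\in I}$ of measurable sets with $\mathcal{L}(A_i)>0$ for all $i$ must have countable index set $I$. First I would decompose $\mathbb{R}$ into the bounded windows $[-n,n]$, $n\in\mathbb{N}$, and for each pair $n,k\in\mathbb{N}$ set $I_{n,k}=\{i\in I:\ \mathcal{L}(A_i\cap[-n,n])\ge 1/k\}$. Since the sets $A_i\cap[-n,n]$ are disjoint subsets of $[-n,n]$, countable additivity of $\mathcal{L}$ forces $\tfrac{1}{k}\,\# I_{n,k}\le \mathcal{L}([-n,n])=2n$, so each $I_{n,k}$ is finite. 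Finally, continuity of measure from below gives $\mathcal{L}(A_i\cap[-n,n])\uparrow \mathcal{L}(A_i)>0$, so every $i$ lies in some $I_{n,k}$; hence $I=\bigcup_{n,k}I_{n,k}$ is a countable union of finite sets and therefore countable.

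For part (ii), the first step is to upgrade almost-everywhere positivity of the derivative to genuine strict monotonicity: because $X$ is absolutely continuous, for $a<b$ one has $X(b)-X(a)=\int_a^b X'(\xi)\,\di\xi>0$, the integral being positive since $X'>0$ on a set of full measure in $[a,b]$. Thus $X$ is a continuous, strictly increasing bijection from $\mathbb{R}$ onto the open interval $J:=X(\mathbb{R})$, and its set-theoretic inverse $Y:=X^{-1}:J\to\mathbb{R}$ exists, is unique (by injectivity of $X$), and is automatically continuous and strictly increasing. The only delicate point that remains is to prove that $Y$ is (locally) absolutely continuous.

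To establish absolute continuity of $Y$ I would invoke the push-forward identity for absolutely continuous monotone maps, namely that $X\#(X'\,\di\xi)=\mathcal{L}|_J$. This is checked on intervals $[a,b]\subset J$, where $\int_{X^{-1}([a,b])}X'\,\di\xi=X(\beta)-X(\alpha)=b-a$ with $\alpha=Y(a)$, $\beta=Y(b)$, and then extends to all Borel sets by uniqueness of measure extension. Setting $g(x):=1/X'(Y(x))$, which is defined for a.e.\ $x\in J$ because $\mathcal{L}\big(X(\{X'=0\})\big)=\int_{\{X'=0\}}X'\,\di\xi=0$, I would apply this identity to $h=g\,\mathbf{1}_{[a,b]}$ and use $Y(X(\xi))=\xi$ to obtain $\int_a^b g\,\di x=\int_\alpha^\beta \tfrac{1}{X'(\xi)}X'(\xi)\,\di\xi=\beta-\alpha=Y(b)-Y(a)$ for every $[a,b]\subset J$; since $g\ge 0$ is thereby shown to be locally integrable, this proves $Y$ is absolutely continuous on compact subintervals with $Y'=g$. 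I expect the main obstacle to be exactly this last step: the inverse of a strictly increasing absolutely continuous function need \emph{not} be absolutely continuous in general (a devil's-staircase phenomenon), so the hypothesis $X'>0$ a.e.\ must be used essentially --- precisely to force $X$ to carry the null set $\{X'=0\}$ to a null set and to keep $g=1/X'(Y(\cdot))$ locally integrable, which is what makes the change-of-variables computation close. Justifying the change of variables rigorously (rather than formally substituting $x=X(\xi)$) is the technical heart; alternatively the inequality in \cite{benedetto2010integration} could be used to bound $\mathcal{L}(Y(E))$ for null sets $E$ and verify the Luzin $N$ property of $Y$ directly.
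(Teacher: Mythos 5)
Your part (i) is essentially the paper's argument---pigeonholing disjoint positive-measure sets into bounded windows with measure thresholds---just written as a direct proof rather than by contradiction, and it even isolates the slightly stronger statement that any disjoint family of positive-measure sets is countable. For part (ii), however, you take a genuinely different route. The paper proves absolute continuity of the inverse via the Banach--Zarecki characterization: the inverse $Z$ is continuous and monotone (hence of bounded variation), so it suffices to verify the Luzin N property, which the paper obtains by writing a null set $E$ in the image as $\bigcup_n E_n$ with $E_n=\{x\in E:\ 0<Z_x(x)\le n\}$ and invoking the estimate $\mathcal{L}(Z(E_n))\le n\,\mathcal{L}(E_n)$ of \cite[Lemma 6.3]{saks1937theory}; the hypothesis $X_\xi>0$ a.e.\ enters there by making the exceptional set (where $X_\xi$ is zero, infinite, or undefined) null, so that its image under $X$ is null and $Z$ has a finite positive derivative at a.e.\ point of the image. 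You instead represent the inverse as an indefinite integral: the push-forward identity $X\#(X_\xi\di\xi)=\mathcal{L}|_J$, checked on intervals via the fundamental theorem of calculus for absolutely continuous functions and extended by uniqueness of $\sigma$-finite Borel measures, yields $Y(b)-Y(a)=\int_a^b g\di x$ with $g=1/(X_\xi\circ Y)\ge 0$ locally integrable, so $Y$ is absolutely continuous with the explicit derivative $Y'=g$. Your argument is more constructive---it produces the formula for $Y'$ and pinpoints exactly where $X_\xi>0$ a.e.\ is used---at the price of justifying the functional change of variables and the measurability of $g$, the one point you leave implicit; this is routine to patch, e.g.\ because $g$ coincides a.e.\ with the derivative of the monotone function $Y$, which is measurable as an a.e.\ limit of difference quotients. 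The paper's route is softer, requiring no change-of-variables machinery, only the image-measure estimate from \cite{saks1937theory}; and your closing alternative (bounding $\mathcal{L}(Y(E))$ for null sets $E$ to verify Luzin N of $Y$ directly) is in fact exactly the paper's proof.
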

%{\color{red} (Should we just give reference(s) to the results in Lemma~\ref{lmm:twoclaims} instead? These two results seem to be very standard.)}
\begin{proof}
(i) 
We prove this by a contradiction argument. Let  $\Lambda$ be an uncountable index set and   $\{A_\alpha\}_{\alpha\in \Lambda}$ be a family of uncountable disjoint subsets of $\mathbb{R}$ with $\mathcal{L}(A_\alpha)>0$ such that 
\[
\mathbb{R}=\cup_{\alpha\in\Lambda}A_\alpha.
\]
Because $\mathcal{L}(A_\alpha)>0$, there must be some  integer $n$ such that $\mathcal{L}(A_{\alpha}\cap[n,n+1])>0$. Since $\Lambda$ is an uncountable set, there must be some integer $n_0$ such that 
\[
\Lambda_0=\{\alpha\in\Lambda:~~\mathcal{L}(A_{\alpha}\cap[n_0,n_0+1])>0\}
\]
is an uncountable set. Then there exists a positive integer $m>0$ such that the set
\[
\Lambda_m=\left\{\alpha\in\Lambda:~~\mathcal{L}(A_{\alpha}\cap[n_0,n_0+1])>\frac{1}{m}\right\}
\]
is uncountable. Since the sets $\mathcal{L}(A_{\alpha}\cap[n_0,n_0+1])$  for $\alpha\in \Lambda_m$ are disjoint with each other, this contradicts with $\mathcal{L}([n_0,n_0+1])=1$.

(ii) We prove this on arbitrary interval $[a,b]$. Clearly $X$ is continuous and strictly increasing on $[a,b]$ and hence, it has a continuous and strictly increasing inverse $Z$. Let 
\[
A=\{\xi\in[a,b]:~~0<X_\xi(\xi)<\infty\},\quad B=[0,1]\setminus A,
\]
and
\[
C=X(A),\quad D=X(B).
\]
We have $\mathcal{L}(B)=0$ and $C\cup D=[X(a),X(b)]$. Since $X$ is absolutely continuous, it satisfies Luzin N property and $\mathcal{L}(X(B))=\mathcal{L}(D)=0$. 

We know that $Z$ has a finite positive derivative at each point of $C$. Let $E\subset C$ satisfying $\mathcal{L}(E)=0$. Then
\[
E=\cup_{n=1}^\infty E_n,
\]
where $E_n=\{x\in E:~~0<Z_x(x)\leq n\}$.  Then, we have $\mathcal{L}(E_n)=0$ and
\[
\mathcal{L}(Z(E))\leq \sum_{n=1}^\infty\mathcal{L}(Z(E_n))\leq \sum_{n=1}^\infty n\mathcal{L}(E_n)=0,
\]
where we have used $\mathcal{L}(E_n)\leq n\mathcal{L}(E_n)$; see \cite[Lemma 6.3]{saks1937theory}.
Hence, the inverse $Z$ on $[X(a),X(b)]$ is a continuous function of bounded variation satisfying the Luzin N property, which means that $Z$ is absolutely continuous.

\end{proof}

We have the following lemma for push-forward measures:
\begin{lemma}\label{lmm:keylemma} 
Let $X:\mathbb{R}\to\mathbb{R}$ be a continuous increasing surjective function. Define two pseudo-inverse functions of $X$ by
\[
Z_1(x)=\inf\{\xi:~~X(\xi)=x\},\quad Z_2(x)=\sup\{\xi:~~X(\xi)=x\}.
\]
Define
\[
A^{pp}=\{\xi:~~X_\xi(\xi)=0,~~Z_1(X(\xi))<Z_2(X(\xi))\},\quad A^{sc}=\{\xi:~~X_\xi(\xi)=0,~~Z_1(X(\xi))=Z_2(X(\xi))\},
\]
and
\[
B=\{\xi:~~X_\xi(\xi)>0\}.
\]
Here, $A^{pp}$ is defined in point-wise sense, and $A^{sc}$ and $B$ are defined in a.e. sense.

Let $g\ge 0 \in L^1(\mathbb{R})$ . Consider the measure 
\[
\mu =X\# (g\di \xi).
\]
Let $g_1=g\cdot 1_B$, $g_2=g\cdot 1_{A^{pp}}$ and $g_3=g\cdot 1_{A^{sc}}$, here $1_{A^{pp}}$, $1_{A^{sc}}$ and $1_B$ are characteristic functions on $A^{pp}$, $A^{sc}$ and $B$ respectively. Then the following statements hold:
\begin{enumerate}
\item[(i)]  $\mathcal{L}(X(A^{pp}\cup A^{sc}) )=0$, where $\mathcal{L}$ is the Lebesgue measure.
\item[(ii)] The absolutely continuous part of $\mu$ is given by $X\#(g_1\di \xi)$.
\item[(iii)] The set  $X(A^{pp})$ is a countable set, and the pure point part of $\mu$ is given by $X\#(g_2\di \xi)$.
\item[(iv)]  The singular continuous part of $\mu$ is given by $X\#(g_3\di \xi)$. 
\end{enumerate}  
\end{lemma}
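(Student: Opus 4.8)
The plan is to split the mass according to the three sets and then identify each piece with a component of the Lebesgue decomposition of $\mu$. Since $X$ is monotone it is differentiable almost everywhere, so the set $N$ where $X_\xi$ fails to exist satisfies $\mathcal{L}(N)=0$; off $N$ the real line is partitioned into $B$, $A^{pp}$ and $A^{sc}$ (up to the point-wise versus a.e.\ conventions recorded in the statement). Because $g\in L^1(\mathbb{R})$ we have $\int_N g\,\di\xi=0$, hence $g=g_1+g_2+g_3$ a.e.\ and consequently $\mu=\mu_1+\mu_2+\mu_3$ with $\mu_i:=X\#(g_i\,\di\xi)$. I would then prove that $\mu_1\ll\mathcal{L}$, that $\mu_2$ is purely atomic, and that $\mu_3$ is singular continuous; once this is done, uniqueness of the decomposition $\mu=\mu_{ac}+\mu_{pp}+\mu_{sc}$ forces $\mu_{ac}=\mu_1$, $\mu_{pp}=\mu_2$ and $\mu_{sc}=\mu_3$, which is exactly (ii), (iii) and (iv).

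For (i), I would invoke the classical fact that if a function has derivative equal to $0$ at every point of a set $E$, then the image $f(E)$ is Lebesgue-null (a Vitali-covering argument, in the same spirit as the Luzin $N$ estimates used in Lemma~\ref{lmm:twoclaims}). Applying this to $X$ on $A^{pp}\cup A^{sc}\subset\{X_\xi=0\}$ yields $\mathcal{L}(X(A^{pp}\cup A^{sc}))=0$. The countability in (iii) follows because each $x\in X(A^{pp})$ corresponds to a nondegenerate interval of constancy $[Z_1(x),Z_2(x)]$ of the monotone function $X$; these intervals are pairwise disjoint, so there are at most countably many of them and $X(A^{pp})$ is countable.

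Next I would treat the two singular pieces directly from the formula $\mu_i(E)=\int_{X^{-1}(E)}g_i\,\di\xi$. On each interval of constancy $[Z_1(x),Z_2(x)]$ the map $X$ collapses to the single value $x$, so $\mu_2=\sum_{x\in X(A^{pp})}\big(\int_{[Z_1(x),Z_2(x)]}g\,\di\xi\big)\,\delta_x$ is a countable sum of point masses, i.e.\ purely atomic. The measure $\mu_3$ is concentrated on $X(A^{sc})$, which is Lebesgue-null by (i), so $\mu_3\perp\mathcal{L}$; moreover for each $x$ the fibre of $X$ meeting $A^{sc}$ is the single point $Z_1(x)=Z_2(x)$, whence $\mu_3(\{x\})=\int_{\{Z_1(x)\}}g\,\di\xi=0$, so $\mu_3$ is atomless. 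Thus $\mu_3$ is singular continuous.

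The main work is the absolute continuity of $\mu_1$. Fix a Borel set $E$ with $\mathcal{L}(E)=0$; it suffices to show $\mathcal{L}\big(X^{-1}(E)\cap B\big)=0$, since $g\,\di\xi\ll\mathcal{L}$ then gives $\mu_1(E)=\int_{X^{-1}(E)\cap B}g\,\di\xi=0$. Writing $B=\bigcup_n B_n$ with $B_n:=\{\xi:\ X_\xi(\xi)>1/n\}$, I would use the companion classical estimate that on a set where the derivative exists and satisfies $X_\xi\ge 1/n$ one has $\mathcal{L}^*(A)\le n\,\mathcal{L}^*(X(A))$. Applying it to $A=X^{-1}(E)\cap B_n$, whose image lies in $E$, gives $\mathcal{L}(X^{-1}(E)\cap B_n)\le n\,\mathcal{L}^*(E)=0$, and summing over $n$ finishes the claim. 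The delicate point throughout is that $X$ is only assumed continuous and increasing (not absolutely continuous), so I cannot appeal to a change-of-variables formula and must instead rely on these pointwise differentiation/covering estimates, which are valid for arbitrary functions. Combining the three identifications with the uniqueness of the Lebesgue decomposition completes the proof.
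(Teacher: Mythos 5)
Your proof is correct, and its global architecture is the same as the paper's: split $g$ into $g_1+g_2+g_3$ using the a.e.\ partition of $\mathbb{R}$ into $B$, $A^{pp}$, $A^{sc}$ together with the Lebesgue-null set where $X_\xi$ fails to exist, identify each push-forward measure, and conclude by uniqueness of the Lebesgue decomposition; your parts (iii) and (iv) (rational indexing of the pairwise disjoint constancy intervals $[Z_1(x),Z_2(x)]$, the fibre computation $\mu(\{x\})=\int_{[Z_1(x),Z_2(x)]}g\di\xi$, and atomless-plus-singular for the third piece) match the paper essentially verbatim. Where you genuinely diverge is in the two null-set estimates. For (i) the paper invokes the change-of-variables formula (the cited theorem of Serrin--Varberg), $\int_{\mathbb{R}} 1_{X(A^{pp}\cup A^{sc})}\di x=\int_{\mathbb{R}}1_{A^{pp}\cup A^{sc}}(\xi)X_\xi(\xi)\di\xi=0$, whereas you use the covering fact that a set on which the derivative vanishes has Lebesgue-null image. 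For (ii) the paper transfers the problem to the pseudo-inverse, proving that $Z_1$ has the Luzin N property on $X(B)$ via the Saks bound $\mathcal{L}(Z_1(\tilde{B}_n\cap E))\le n\,\mathcal{L}(\tilde{B}_n\cap E)$ on $\tilde{B}_n=\{x:\ 0<Z_{1x}(x)\le n\}$, which tacitly requires $Z_1$ to be differentiable at points of $X(B)$; you instead stay with $X$ itself and use the dual estimate $\mathcal{L}^*(A)\le n\,\mathcal{L}^*(X(A))$ on sets where $X_\xi\ge 1/n$. Your route is more self-contained: it needs neither the change-of-variables citation nor any discussion of differentiability of the pseudo-inverse. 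One correction, though: your parenthetical claim that these covering estimates are ``valid for arbitrary functions'' is true only for the zero-derivative one; the lower-bound estimate $\mathcal{L}^*(X(A))\ge c\,\mathcal{L}^*(A)$ on $\{X_\xi\ge c\}$ can fail without injectivity (a piecewise-linear zigzag with slopes $\pm 1$, folding two unit intervals onto one, gives a counterexample), and its Vitali-covering proof uses the monotonicity of $X$ to make the image intervals essentially disjoint. Since $X$ is increasing here, your application is sound, but that remark should be weakened accordingly.
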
	

\begin{proof}
(i)  It follows from the change of variable formula (see \cite[Theorem 4]{serrin1969general} for instance) that
\begin{equation}\label{eq:zero}
\int_{\mathbb{R}} 1_{X(A^{pp}\cup A^{sc})}(x) \di x= \int_{\mathbb{R}} 1_{A^{pp}\cup A^{sc}} (\xi) X_{\xi}(\xi)\di \xi =0.
\end{equation}

(ii) We prove that $X\#(g_1\di \xi)\ll \mathcal{L}$. Let $E\subset \mathbb{R}$ satisfy $\mathcal{L}(E)=0$. Then
\[
[X\#(g_1\di \xi)](E)=\int_{X^{-1}(E)}g_1(\xi)\di \xi=\int_{B\cap X^{-1}(E)}g(\xi)\di \xi.
\]
Hence, we only need to show $\mathcal{L}(B\cap X^{-1}(E))=\mathcal{L}(Z_1(X(B)\cap E))=0$. This means $Z_1$ has Luzin N property on $X(B)$. Set $\tilde{B}=X(B)$.  Since 
\[
\tilde{B}\cap E=\cup_{n=1}^\infty (\tilde{B}_n\cap E),\quad\mbox{where } \tilde{B}_n:=\left\{x\in \tilde{B}:~~0<Z_{1x}(x)\leq n\right\},
\]
and
\[
\mathcal{L}(Z_1(\tilde{B}_n\cap E))\leq n\mathcal{L}(\tilde{B}_n\cap E)=0,
\]
we have 
\[
\mathcal{L}(Z_1(\tilde{B}\cap E))\leq\sum_{n=1}^\infty\mathcal{L}(Z_1(\tilde{B}_n\cap E))=0.
\] 
Hence, $X\#(g_1\di \xi)\ll \mathcal{L}$. Due to (i), we have $\mu - X\#(g_1\di \xi) \perp \mathcal{L}$, and hence, $X\#(g_1\di \xi)$ is the absolutely continuous part of $\mu$. 

(iii) Suppose $ A^{pp} \neq \emptyset$.
Let $x$, $y\in X(A^{pp})$ and $x\neq y$. By the definition of $X(A^{pp})$, we have
\[
Z_1(x)<Z_2(x),\quad Z_1(y)<Z_2(y).
\]
For each $x\in X(A^{pp})$, we choose a rational number in $[Z_1(x),Z_2(x)]$ to stand for it. Due to the increasing nature of $X$, we  have $[Z_1(x),Z_2(x)]\cap [Z_1(y),Z_2(y)]=\emptyset$, which implies different points in $X(A^{pp})$ correspond to different rational numbers. Therefore, the set $X(A^{pp})$ is at most countable.

For $\xi\in A^{pp}$, denote $x=X(\xi)$. Then, we have $z_1(x)<z_2(x)$ and
\[
\mu(\{x\})=[X\#(g\di \xi)](\{x\})=\int_{X^{-1}(\{x\})} g(\xi)\di\xi=\int_{[z_1(x),z_2(x)]}g(\xi)\di \xi \ge 0.
\]
Therefore, there is a pure point measure of $\mu$ at $x$ if the above value is strictly positive. Moreover, there is no other pure point parts as for $\xi \in A^{sc} \cup B$, $Z_1(X(\xi)) = Z_2(X(\xi)) $ and the above value must be 0.

(iv) It is direct to see from the definition that $\mu = X\#(g_1\di\xi) + X\#(g_2\di\xi) + X\#(g_3\di\xi)$.
We have proven  in (i) and (ii) that $ X\#(g_1\di\xi)$ is the the absolutely continuous part of $\mu$ and $ X\#(g_2\di\xi)$ is the pure point part of $\mu$, hence, by Lebesgue decomposition theorem,  the remaining part $ X\#(g_3\di\xi)$ is the singular continuous part of $\mu$. 
%If $\mathcal{L}(A^{sc}) \neq 0$ then  this singular continuous part is nonzero.
\end{proof}

The last lemma shows that a locally absolutely continuous (i.e., absolutely continuous on any closed and bounded interval of $\mathbb{R}$) function has to be globally absolutely continuous if its derivative is in $L^p(\mathbb{R})$ for some $p\in[1,\infty]$.
\begin{lemma}\label{lem:locally and global absolutely continuous}
Let $1\le p \le \infty$. Then any locally absolutely continuous function  $u$ on $\mathbb{R}$  with its derivative $u_x \in L^p(\mathbb{R})$ is globally absolutely continuous. 
\end{lemma}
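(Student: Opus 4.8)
The plan is to verify the $\varepsilon$--$\delta$ definition of global absolute continuity directly, using the fundamental theorem of calculus on bounded intervals (available from local absolute continuity) to reduce matters to the absolute continuity of the set function $E\mapsto\int_E|u_x|\di x$, and then to extract the necessary uniform modulus from the global integrability $u_x\in L^p(\mathbb{R})$.

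First I would recall that $u$ is globally absolutely continuous on $\mathbb{R}$ exactly when, for every $\varepsilon>0$, there is $\delta>0$ such that every finite family of pairwise non-overlapping intervals $\{(a_k,b_k)\}_{k=1}^m$ with $\sum_{k=1}^m(b_k-a_k)<\delta$ satisfies $\sum_{k=1}^m|u(b_k)-u(a_k)|<\varepsilon$. Because $u$ is locally absolutely continuous, on each bounded interval $[a_k,b_k]$ it obeys $u(b_k)-u(a_k)=\int_{a_k}^{b_k}u_x\di x$, so that, writing $E:=\bigcup_{k=1}^m(a_k,b_k)$,
\[
\sum_{k=1}^m|u(b_k)-u(a_k)|\le\sum_{k=1}^m\int_{a_k}^{b_k}|u_x|\di x=\int_E|u_x|\di x,
\]
with $\mathcal{L}(E)=\sum_{k=1}^m(b_k-a_k)<\delta$ since the intervals are non-overlapping. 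Thus it suffices to produce, for each $\varepsilon>0$, a threshold $\delta>0$ --- independent of the position of $E$ in $\mathbb{R}$ --- such that $\mathcal{L}(E)<\delta$ forces $\int_E|u_x|\di x<\varepsilon$.

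I would then split into two cases according to $p$. For $1<p\le\infty$ the uniform bound is immediate from H\"older's inequality: with $p'$ the conjugate exponent,
\[
\int_E|u_x|\di x\le\|u_x\|_{L^p(\mathbb{R})}\,\mathcal{L}(E)^{1/p'},
\]
and since $p'<\infty$ the right side vanishes as $\mathcal{L}(E)\to0$, so any $\delta$ with $\|u_x\|_{L^p}\,\delta^{1/p'}<\varepsilon$ suffices (if $\|u_x\|_{L^p}=0$ then $u$ is constant and the claim is trivial). For the borderline case $p=1$, H\"older gives nothing, and I would instead invoke the classical absolute continuity of the Lebesgue integral: since $u_x\in L^1(\mathbb{R})$, for every $\varepsilon>0$ there is $\delta>0$ with $\int_E|u_x|\di x<\varepsilon$ whenever $\mathcal{L}(E)<\delta$. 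Either way the displayed chain yields $\sum_k|u(b_k)-u(a_k)|<\varepsilon$, which is the desired global absolute continuity.

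I do not expect a genuine obstacle here; the only conceptual point --- and the reason the hypothesis is stated with the global space $L^p(\mathbb{R})$ rather than merely $L^p_{loc}$ --- is that local absolute continuity supplies a modulus that may degrade as the intervals move off to infinity, whereas the global integrability of $u_x$ furnishes the single, location-independent modulus used above. The mild nuisance is simply the bookkeeping of the $p=1$ endpoint, where one falls back on the standard theorem rather than on H\"older's inequality.
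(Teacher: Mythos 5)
Your proposal is correct, and it follows the same overall reduction as the paper: use local absolute continuity and the fundamental theorem of calculus to bound $\sum_k|u(b_k)-u(a_k)|$ by $\int_E|u_x|\di x$ over the union $E$ of the intervals, then show this integral is uniformly small when $\mathcal{L}(E)<\delta$, with the $p=1$ endpoint handled in both cases by the absolute continuity of the Lebesgue integral. Where you differ is in the mechanism for the main case: for $1<p\le\infty$ you invoke H\"older's inequality, $\int_E|u_x|\di x\le\|u_x\|_{L^p}\,\mathcal{L}(E)^{1/p'}$, which gives an explicit, location-independent modulus $\delta\sim(\varepsilon/\|u_x\|_{L^p})^{p'}$ and treats $p=\infty$ on the same footing. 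The paper instead handles $p=\infty$ separately via Lipschitz continuity, and for $1<p<\infty$ runs a more hands-on truncation argument: it decomposes $\mathbb{R}$ into the level sets $E_n=\{n\le|u_x|<n+1\}$, bounds the contribution where $|u_x|<n_0$ by $n_0\delta$, and uses the Chebyshev-type estimate $\int_{E_n}|u_x|\di x\le n^{1-p}\int_{E_n}|u_x|^p\di x$ to make the tail $\sum_{n\ge n_0}$ small. The two arguments encode the same idea --- the large values of $|u_x|$ contribute little because $u_x\in L^p$, and the small values are controlled by $\mathcal{L}(E)$ --- but your H\"older packaging is shorter and unifies the cases $1<p\le\infty$, while the paper's version is more elementary in that it avoids H\"older's inequality altogether.
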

\begin{proof} 
 If $p=1$, it follows from the absolute continuity of Lebesgue integral that $u$ must be globally absolutely continuous. If $p = \infty$, then $u$ is globally Lipschitz continuous, and hence, globally absolutely continuous.  

Let $1<p<\infty$. For any $\epsilon>0$, we are going to find some $\delta>0$,   such that for any disjoint open intervals $\{(a_i, b_i)\}_{i=1}^{k}$ with $\sum_{i=1}^{k}|b_i-a_i| < \delta$, we have $\sum_{i=1}^{k}|u(b_i)-u(a_i)| < \e$. For this purpose, we define $E_n: = \{x: n\le |u_x (x)|<n+1 \}$ for all integer $n\ge 0$. Let $n_0$ be a large enough integer such that $n_0^{p-1} \ge \frac{2}{\epsilon} \|u_x\|^p_{L^p}$. Then 
\begin{equation*}
\begin{aligned}
\sum_{i=1}^{k}|u(b_i)-u(a_i)| &= \sum_{i=1}^{k} \left| \int_{a_i}^{b_i} u_x(x) \di x \right|\le \sum_{i=1}^{k} \int_{a_i}^{b_i} |u_x(x)| \di x 
\\
%= &  \sum_{n=0}^{\infty}\int_{ \left(\cup_{i=1}^{k}(a_i, b_i)\right) \cap E_n}|u_x(x)| \di x \\
&= \sum_{n=0}^{n_0-1}\int_{ \left(\cup_{i=1}^{k}(a_i, b_i)\right) \cap E_n}|u_x(x)| \di x +\sum_{n=n_0}^{\infty}\int_{ \left(\cup_{i=1}^{k}(a_i, b_i)\right) \cap E_n}|u_x(x)| \di x\\
&\le n_0 \delta + \sum_{n=n_0}^{\infty}\int_{ \left(\cup_{i=1}^{k}(a_i, b_i)\right) \cap E_n}|u_x(x)| \di x.\\
\end{aligned}
\end{equation*} 
Moreover,
\begin{equation*}
\begin{aligned}
\sum_{n=n_0}^{\infty}\int_{ \left(\cup_{i=1}^{k}(a_i, b_i)\right) \cap E_n}|u_x(x)| \di x
& \le  \sum_{n=n_0}^{\infty}\int_{  E_n}|u_x(x)| \di x 
\le \sum_{n=n_0}^{\infty} \frac{1}{n^{p-1}}\int_{  E_n}|u_x(x)|^p \di x \\ 
& \le  \frac{1}{n_0^{p-1}}\|u_x\|^p_{L^p} \le \frac{\epsilon}{2}.
\end{aligned}
\end{equation*} 
Now we choose $\delta < \frac{\epsilon}{2n_0}$, combining the above two inequalities  we have
\begin{equation*}
\sum_{i=1}^{k}|u(b_i)-u(a_i)| < \frac{\epsilon}{2} + \frac{\epsilon}{2} = \epsilon.
\end{equation*} 

\end{proof}

In the following example, we are going to use a fat Cantor set to create an initial datum $(\bar{u},\bar{\mu})$ such that the singular continuous part of the corresponding energy measure $\mu(t)$ is nonzero at $t=2$.  
\begin{example}[Fat Cantor set for $A_t^{L,sc}$]\label{ex:singularremark}
	
\

$\bullet$ \textbf{Fat Cantor set:}
The fat Cantor set that we use is defined as follows: we consider the set $[0,1]$, in the first step, we remove the middle open interval  with length $\frac{1}{4}$ (i.e. $[\frac{3}{8}, \frac{5}{8}]$) from   $[0, 1]$. At $n$-th step, we remove the open sub-intervals of length $\frac{1}{4^n}$ from the middle of each of the $2^{n-1}$ remaining intervals.  Continuing this procedure, the fat Cantor set $E$ is defined as the points that are never removed. We have the following well-known results:  (i) set $E$ is closed; (ii) set $E$ is nowhere dense in $[0,1]$; (iii)  $\mathcal{L}(E)=\frac{1}{2}$; and (iv) all the endpoints of the removed intervals are dense in set $E$.

$\bullet$ \textbf{A function vanishing exactly on $E$:} From 
Now, we use the above fat Cantor set to define a continuous function $h$. We choose a sequence of smooth non-negative function $h_n(x)$ such that each  $h_n(x)$ is positive inside each open interval removed at the $n$-th step and $h_n(x)=0$ outside the intervals removed at the $n$-th step. We also assume that $\sup_{x\in [0,1]} h_n(x) = 1$. Then we  define  
\[
h(x) = \sum_{n=1}^{\infty} \frac{h_n(x)}{n^2}.
\] 
As the uniform limit of a sequence of continuous functions, the function $h$ is continuous. Since $h_n$'s are supported in disjoint sets and $0\leq h_n \leq 1$ for all positive integer $n$, we also have $0\leq h \leq \frac{\pi^2}{6}$. Since $h=0$ on the endpoints of all the removed intervals, combing the property (iv) above yields $h(x)=0$ for all $x\in E$.  Moreover, $h(x)>0$ for $x \in [0,1]\backslash E$.

$\bullet$ \textbf{Initial datum $(\bar{u},\bar{\mu})$:} Finally, our target initial datum in this example is defined by
\[
\bar{u}_x(x)=\left\{
\begin{aligned}
&h^{\frac{1}{2}}(x)-1,~~x\in[0,1],\\
&0,~~\textrm{otherwise},
\end{aligned}
\right.
\]
and
\[
\bar{u}(x)=\int_{(-\infty,x)} \bar{u}_x(\eta)\di \eta,\quad \di\bar{\mu}=\bar{u}_x^2\di x.
\]
Hence, the function $\bar{u}\in C_b(\mathbb{R})$ and $\bar{u}_x\in L^2(\mathbb{R})$. Since $\bar{\mu}$ is absolutely continuous, we know that $\bar{x}(\alpha)$ (defined by \eqref{eq:barx1}) is strictly increasing and $\mathcal{L}(\mathbb{R}\setminus B_0^L)=0$ for $B_0^L$ given in Proposition \ref{pro:keypro} (iv).
From \eqref{eq:separation}, we have the derivative of characteristic $y(\alpha,t)$ at $t=2$:  
\begin{equation*}
y_{\alpha}(\alpha,2)=\bar{x}'(\alpha)\left[1+%\frac{t}{2}
\bar{u}_x(\bar{x}(\alpha))\right]^2=\bar{x}'(\alpha)h(\bar{x}(\alpha)),\quad \alpha\in B_0^L ~\textrm{and $\bar{x}(\alpha) \in [0,1]$}.
\end{equation*}
and
\begin{equation*}
y_{\alpha}(\alpha,2)=\bar{x}'(\alpha)\left[1+%\frac{t}{2}
\bar{u}_x(\bar{x}(\alpha))\right]^2=\bar{x}'(\alpha)>0,\quad \alpha\in B_0^L ~\textrm{and $\bar{x}(\alpha) \notin [0,1]$}.
\end{equation*}
It follows from Remark~\ref{rmk:thm21} (iii) that $\bar{x}'(\alpha) = \frac{t^2}{t^2+4}$ and $f(\alpha)=\frac{4}{t^2+4}$ on $A_t^{L,sc}$, so $A_2^{L,sc}=\{\alpha:~~y_\alpha(\alpha,2)=0\}=\bar{x}^{-1}(E)$ has a positive measure. Hence, the inverse of $y(\cdot,2)$ does not have Luzin N property since $\mathcal{L}(y(A_2^{L,sc},2))=0$ and $\mathcal{L}(A_2^{L,sc})=\mathcal{L}(\bar{x}^{-1}(E))=2\mathcal{L}(E)=1>0$. Moreover,
\[
\mu_{sc}(2)(\mathbb{R})=\int_{A_2^{L,sc}} f(\alpha)\di \alpha=\frac{1}{2}\mathcal{L}(\bar{x}^{-1}(E))=\mathcal{L}(E) = \frac{1}{2}.
\]
In the above construction, $\bar{u}_x(x)$ is  continuous, and $1+\bar{u}_x(x)$ vanishes on $E$.
We remark that one can have a smooth function $\bar{u}$  such that $1+\bar{u}_x(x)$ vanishes exactly at the closed set $E$ by Whitney's theorem \cite[Theorem I]{whitney1934analytic}. Obviously, the construction above for time $t=2$ can be adapted to any time $t\ne 0$.
\end{example}

\bibliographystyle{plain}
\bibliography{bibofChara}

\end{document}